\author{Lara Simone Su\'arez}
\title{Exact Lagrangian cobordism and pseudo-isotopy}
\address{D\'epartement de Math\'ematques et de statistique\\ Universit\'e de Montr\'eal, Montr\'eal, QC, Canada.}
\email{suarez@dms.umontreal.ca}
\address{Present address: Institut Camille Jordan
\\ Universit\'e Lyon 1, Villeurbanne cedex, France.}
\email{suarez-lopez@math.univ-lyon1.fr}
\newtheorem{lemma}{Lemma}[section]
\newtheorem{theorem}{Theorem}[section]
\newtheorem{definition}{Definition}[section]
\newtheorem{remark}{Remark}[section]
\newtheorem{proposition}{Proposition}[section]
\newtheorem{corollary}{Corollary}[section]
\newtheorem*{theorem*}{Main Theorem}
\newtheorem*{conjectures}{Symplectic s-cobordism Conjecture}
\newtheorem*{conjecturess}{Conjecture}
\newtheorem*{corollary*}{Corollary}
\begin{document}

\begin{abstract}
We show that under some topological assumptions, an exact Lagrangian cobordism $(W; L_{0}, L_{1})$ of dimension $dim(W) >5$ is a Lagrangian pseudo-isotopy. This result is a weaker form of a conjecture proposed by Biran and Cornea, which states that any exact Lagrangian cobordism is Hamiltonian isotopic to a Lagrangian suspension.
\end{abstract}
\maketitle
\tableofcontents

\section{Introduction}
Let $(M, \omega)$ be a tame symplectic manifold  \cite[Definition 4.1.1]{AuLa}. Consider a pair of closed Lagrangian submanifolds $L_{0}, L_{1}\subset (M, \omega)$. 
In this paper we will concentrate on the equivalence relation of Lagrangian cobordism between Lagrangian submanifolds $L_{0}, L_{1}$.
\begin{definition}\cite[Definition 2.1.1]{BiC}
An (elementary) Lagrangian cobordism $(W; L_{0}, L_{1})$ is a smooth cobordism $W$ between $L_{0}$ and $L_{1}$ admitting a Lagrangian embedding $$W \hookrightarrow  (\widetilde{M}=([0,1]\times \mathbb{R})\times M, (dx \wedge dy) \oplus \omega)$$ such that for some $\epsilon > 0$ we have: 
$$W \cap ([0,\epsilon)\times \mathbb{R}\times M) = [0,\epsilon)\times \{1\} \times L_{0},$$ 
$$W \cap ((1-\epsilon, 1] \times \mathbb{R}\times M) = (1-\epsilon, 1] \times \{1\} \times L_{1}.$$

\end{definition}
The relation of Lagrangian cobordism was introduced by Arnold \cite{Ar1}, \cite{Ar2}. Since then, many authors have studied this relation \cite{Eli}, \cite{Aud},\cite{Che},\cite{NaTa},\cite{BiC}
,\cite{Ta}.
A recent result of Biran and Cornea \cite[Theorem 2.2.2]{BiC}, states that if  a pair of monotone Lagrangian submanifolds $L_{0}, L_{1}$, are related by a monotone Lagrangian cobordism $(W; L_{0}, L_{1})$, then their associated Floer homologies (when defined) are isomorphic, that is $HF(L_{0}; \mathbb{Z}_{2}) \cong HF(L_{1};\mathbb{Z}_{2})$.

A choice of orientation and spin structure on the cobordism $(W; L_{0}, L_{1})$ allows us to extend this result to $\mathbb{Z}(\pi_{1}W)$-coefficients.
Thus, we can conclude that the $\mathbb{Z}(\pi_{1}W)$-Floer homologies of the Lagrangians $L_{0}, L_{1}$ coincide (see the precise statement as Theorem \ref{Teorema}).

Consider an orientable and spin exact Lagrangian cobordism $(W, L_{0}, L_{1})$ satisfying the topological condition:

\textit{\label{hyp} (1)
The morphisms $(j_{0})_{\#}:\pi_{1}(L_{0})\rightarrow \pi_{1}(W)$ and $(j_{1})_{\#}:\pi_{1}(L_{1})\rightarrow \pi_{1}(W)$ induced by the inclusion maps $L_{0}\hookrightarrow W$ and $L_{1}\hookrightarrow W$ are isomorphisms.}

For such a cobordism, Theorem \ref{Teorema} implies that $(W; L_{0}, L_{1})$ is an h-cobordism.
By an h-cobordism we understand a cobordism $(W; L_{0}, L_{1})$ where the inclusion map $L_{i}\hookrightarrow M$ is a homotopy equivalence for $i= 0, 1$.

In summary, an orientable-exact-spin-Lagrangian cobordism satisfying condition \ref{hyp} is an h-cobor-\\dism.
In particular, for a simply connected exact Lagrangian cobordism, the $\mathbb{Z}$-version of \cite[Theorem 2.2.2]{BiC} and the h-cobordism Thoerem of Smale, imply that the cobordism is a pseudo-isotopy i.e. it is diffeomorphic to the trivial cobordism $([0,1]\times L_{0}; L_{0}, L_{0})$. This result appears in \cite{Ta}, for an exact Lagrangian cobordism in a simply-connected cotangent bundle and the proof uses different techniques.

In higher dimensions $(dim(W) > 5)$, the h-cobordisms are classified (up to simple homotopy equivalence) by their Whitehead torsion. For these dimensions, the $s$-cobordism Theorem states that an h-cobordism with trivial Whitehead torsion is a pseudo-isotopy. 

It is natural to ask whether an exact Lagrangian cobordism $(W; L_{0}, L_{1})$ is a Lagrangian pseudo-isotopy (by a Lagrangian pseudo-isotopy we understand a Lagrangian cobordism diffeomorphic to the trivial cobordism $([0,1]\times L_{0}; L_{0}, L_{0})$).  

Given the rigidity of exact Lagrangian submanifolds, and therefore of exact Lagrangian cobordisms, Biran and Cornea proposed the following conjecture:
\begin{conjecturess}[Biran and Cornea, 2012]
Any exact Lagrangian cobordism is Hamiltonian isotopic to a Lagrangian suspension.
\end{conjecturess}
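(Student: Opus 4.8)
\medskip
\noindent\emph{The plan.} I would reduce the conjecture to a single geometric statement about the behaviour of $W$ under a projection, and then attack that. Write $\widetilde M=([0,1]_{x}\times\mathbb{R}_{y})\times M$ and let $\pi\colon\widetilde M\to[0,1]$ be the projection onto the cobordism parameter. Among exact Lagrangian cobordisms, a Lagrangian suspension $\Sigma_{\psi}L$ of a Hamiltonian isotopy $\psi=\{\psi_{t}\}$ is distinguished by the property that $\pi$ restricts to it as a trivial fibration with fibre $L$: its point over $t$ has $M$-coordinate $\psi_{t}(x)$, while its $\mathbb{R}$-coordinate is pinned by the Lagrangian condition (it records the action of the path $s\mapsto\psi_{s}$). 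Conversely, suppose one can arrange, by an ambient Hamiltonian isotopy of $\widetilde M$ that is the identity near the cylindrical ends, that $\pi|_{W}\colon W\to[0,1]$ is a submersion. Then $W$ is a smooth family $\{W_{t}\}_{t\in[0,1]}$ of Lagrangian submanifolds of $(M,\omega)$, each exact with primitive the (smoothly varying) restriction of $y$, joining $W_{0}=L_{0}$ to $W_{1}=L_{1}$; this is an \emph{exact} Lagrangian isotopy, hence is generated by an ambient Hamiltonian isotopy $\phi=\{\phi_{t}\}$ of $M$, and the Lagrangian and cylindrical-end conditions then force the $\mathbb{R}$-coordinate of $W$ to coincide with that of $\Sigma_{\phi}L_{0}$, whence $W=\Sigma_{\phi}L_{0}$. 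In particular $\phi_{1}(L_{0})=L_{1}$ drops out for free, so the weaker assertion ``the two ends are Hamiltonian isotopic'' cannot be isolated as an easier intermediate step. Thus the conjecture is \emph{equivalent} to the statement: \textit{the critical points of $\pi|_{W}$ can be eliminated by an ambient Hamiltonian isotopy of $\widetilde M$ supported away from the ends.}

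\medskip
\noindent\emph{The topological cancellation.} First I would make $\pi|_{W}$ Morse by a generic compactly supported Hamiltonian perturbation; the cylindrical ends keep its (un)stable membranes well behaved, so $\pi|_{W}$ determines a handle decomposition of $W$ rel $L_{0}$. Under hypothesis (1) of the introduction, Theorem~\ref{Teorema} together with the chosen spin and orientation data makes $W$ an $h$-cobordism with vanishing Whitehead torsion; when $\dim W>5$, the $s$-cobordism theorem then cancels \emph{all} critical points of $\pi|_{W}$ smoothly, each cancellation being a Whitney move along an embedded disc. So, smoothly, there is an isotopy of $W$ in $\widetilde M$ fixing the ends after which $\pi|_{W}$ is a submersion. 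The entire content of the conjecture lies in upgrading this smooth isotopy to a Hamiltonian one. (Dispensing with hypothesis (1), which the conjecture as stated requires, is a separate difficulty — presumably one must bound $\pi_{1}$ of the ends from that of $W$ directly via Floer theory — and I would treat it first, or accept it as an extra hypothesis at the outset.)

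\medskip
\noindent\emph{The main obstacle: making the cancellation Hamiltonian.} A single Whitney cancellation is local: it happens in a Weinstein neighbourhood $U$ — which one can take to be a cotangent bundle $T^{*}D$ of the Whitney disc $D$ (chosen with Lagrangian tangency along $W$) — inside which the relevant piece of $W$ is a Lagrangian cap whose projection to $[0,1]$ must be unknitted. Two obstructions stand in the way of doing this by a compactly supported Hamiltonian isotopy of $U$ rather than a smooth one: the symplectic area swept must vanish, which I would force by a preliminary Hamiltonian correction inside $T^{*}D$ using that exactness of $W$ kills the relevant relative period; and the rigidity of Lagrangian embeddings, which forbids a naive $h$-principle — but here the move one needs is precisely a uniqueness statement for conical exact Lagrangians in a cotangent bundle, where the nearby-Lagrangian technology of Abouzaid--Kragh and Abouzaid applies and should convert the smooth Whitney cancellation into a Hamiltonian one. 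The hard part will be the globalisation: performing the local cancellations in a compatible order, keeping the primitive of $\lambda|_{W}$ controlled as they accumulate, and reconciling the combinatorics of the smooth $s$-cobordism argument with the Lagrangian constraints, so that at the end $\pi|_{W}$ is an honest submersion and the first paragraph applies. Carrying this through — together with the removal of hypothesis (1) — is exactly the gap between the theorem proved in this paper and the conjecture above.
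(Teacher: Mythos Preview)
This statement is a \emph{conjecture} in the paper, not a theorem: the paper does not prove it and does not claim to. What the paper proves is the strictly weaker Main Theorem (Theorem~\ref{s-theorem}), namely that under hypothesis~(1) and the spin/orientability assumptions, an exact Lagrangian cobordism of dimension $>5$ is a Lagrangian \emph{pseudo-isotopy} --- diffeomorphic to $[0,1]\times L_{0}$ --- with no claim that this diffeomorphism can be realised by an ambient Hamiltonian isotopy, let alone that $W$ is Hamiltonian isotopic to a suspension. So there is no ``paper's own proof'' to compare against.

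You appear to understand this: your write-up is a proposed \emph{strategy}, not a proof, and you say so in your final sentence. Your reduction in the first paragraph is correct and well known --- $\pi|_{W}$ being a submersion is equivalent to $W$ being a suspension --- and your identification of the essential difficulty (upgrading the smooth handle-cancellations supplied by the $s$-cobordism theorem to Hamiltonian ones) is exactly right. The paper's contribution sits entirely inside your ``topological cancellation'' paragraph: it supplies the vanishing of $\tau(W,L_{0})$ via Floer theory, hence the smooth triviality of $W$. Everything beyond that --- the local Hamiltonian Whitney move, the appeal to nearby-Lagrangian technology in the cotangent model, the globalisation --- is speculative, and you flag it as such. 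That is an honest assessment of where the problem stands; just be clear that what you have written is a programme, not a proof, and that the conjecture remains open.

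One caution on your middle paragraph: you invoke Theorem~\ref{Teorema} together with the $s$-cobordism theorem as if it directly yields vanishing torsion. In the paper this is the content of Section~\ref{chapitre whitehead} (Theorem~\ref{theorem invariance}), which requires the full bifurcation analysis to show that the Floer torsion is invariant under horizontal isotopy and hence vanishes by displaceability; Theorem~\ref{Teorema} alone only gives the $h$-cobordism statement. Your sketch elides this step.
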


A Lagrangian suspension \cite{AuLa} is a Lagrangian pseudo-isotopy defined by the image of the map \begin{align*}
[0,1]\times L &\rightarrow [0,1] \times \mathbb{R} \times M\\ (t,x)&\rightarrow(t, -H(t,\psi^{t}(x)),\psi^{t}(x))
\end{align*} where $\{\psi^{t}\}_{t\in [0,1]}$ is a Hamiltonian isotopy generated by some Hamiltonian $H$ and $L \subset (M, \omega)$ is some Lagrangian submanifold.

A weaker form of the conjecture of Biran-Cornea is proven in the main theorem of this paper:
\begin{theorem*}
Let $(W; L_{0}, L_{1})$ be an exact, orientable and spin Lagrangian cobordism equipped with a choice of spin structure. Assume $dim(W)>5$.
  
If the map $(j_{i})_{\#}:\pi_{1}(L_{i})\rightarrow \pi_{1}(W)$ induced by the inclusion $L_{i} \hookrightarrow W$ is an isomorphism for $i =0,1$, then $(W; L_{0}, L_{1})$ is a Lagrangian pseudo-isotopy.
\end{theorem*}

The paper is organized as follows. In section \ref{Chapter Floer homology} we study the homology of the $\mathbb{Z}(\pi_{1}W)$-Floer complex of a monotone Lagrangian cobordism. 
The technical aspect of the section concerns the definition of the Floer complex for Lagrangian cobordism, and is treated following the work of Biran-Cornea \cite{BiC}.
As a result, we obtain that a cobordism which satisfies the hypotheses of the Main Theorem is automatically an h-cobordism, and as such it has a well-defined Whitehead torsion.  

In section \ref{Chapter Simple h-theory} we briefly recall some definitions and known results on the Whitehead torsion. In particular, we state the s-cobordism theorem \cite{Ba}, \cite{Ma}, \cite{St}, which will constitute a fundamental ingredient in the proof of the Main Theorem. 

In section \ref{chapitre whitehead} we study the invariance (under horizontal isotopies) of the Whitehead torsion of the $\mathbb{Z}(\pi_{1}W)$-Floer complex associated to an exact Lagrangian cobordism (see Theorem \ref{theorem invariance}). This is, perhaps, the most technical part in the proof of the Main Theorem due to the bifurcation analysis involved. The proof of Theorem \ref{theorem invariance} is based on the work of M. Sullivan \cite{Su} and Lee \cite{Le2}. We apply the technique of stabilization of Sullivan, and a gluing theorem for a degenerate Floer strip, proved by Lee for the Floer theory of Hamiltonian orbits. It follows that if a cobordism satisfies the hypotheses of the Main Theorem, then its torsion agrees with the torsion of the Floer complex. We then apply the displaceability of the cobordism, which yields that the Whitehead torsion of the cobordism vanishes. Hence, we conclude with an application of the s-cobordism theorem, that the cobordism in question, is a Lagrangian pseudo-isotopy. This concludes the proof of the Main Theorem, modulo orientation issues.

Finally, in section \ref{Chapter orientations} we address the orientation issues which arose in sections \ref{Chapter Floer homology} and \ref{chapitre whitehead}. This technical part is based on the work of FOOO \cite{FOOO} and Lee \cite{Le2}.

\begin{center}
\textbf{Acknowledgments}
\end{center}
This paper is based on my Ph.D thesis being carried out under the guidance of Prof. Octav Cornea in Universit\'e de Montr\'eal. This work would have not been possible without all the ideas and support of my great adviser, I am deeply grateful to him. I would like to thank Jean-Fran\c{c}ois Barraud, Baptiste Chantraine and Fran\c{c}ois Lalonde for their interest in this work and helpful conversations. I am grateful to Michael Sullivan for his comments and corrections on my thesis, from which this paper has been extracted. 

\section{Floer homology with $\mathbb{Z}(\pi_{1}W)$-coefficients}
\label{Chapter Floer homology} 
One of the main tools used to study Lagrangian submanifolds is Floer homology. In this section we study Floer homology with twisted coefficients, associated to a monotone Lagrangian cobordism $(W; L_{0}, L_{1})$.

In this section we show that the Floer homology with twisted coefficients is invariant under monotone Lagrangian cobordism, this is the twisted version of \cite[Theorem 2.2.2]{BiC}.

Moreover, when the Lagrangian cobordism is exact, under an additional topological condition the cobordism itself is an h-cobordism.

To a Lagrangian cobordism $(W; L_{0}, L_{1})$ we can associate a Lagrangian with cylindrical ends $\overline{W}$ (see definition 2.2). Let $HF(\overline{W}, \overline{W};\mathbb{Z}(\pi_{1}W),[f_{S}])$ denote the Floer homology with $\mathbb{Z}(\pi_{1}W)$-coefficients of $\overline{W}$, where $[f_{S}]$ denotes the path component of a function $f_{S}$ defining a vertical perturbation on the ends of $\overline{W}$ for $S \in \{\emptyset, L_{0}, L_{1}, L_{0} \cup L_{1}\}$.

The main theorem of this section is the following:
\begin{theorem}
\label{Teorema}
Let $(W; L_{0}, L_{1})$ be an orientable and spin\footnote{See section 5 for definition of spin.}, monotone Lagrangian cobordism equipped with a choice of spin structure. 
Then, $$HF(\overline{W}, \overline{W};\mathbb{Z}(\pi_{1}W),[f_{L_{0}}]) \cong 0 \cong HF(\overline{W},\overline{W};\mathbb{Z}(\pi_{1}W),[f_{L_{1}}]) \text{ and,}$$
$$HF(L_{0};\mathbb{Z}(\pi_{1}W))\cong HF(\overline{W},\overline{W};\mathbb{Z}(\pi_{1}W),[f_{\emptyset/ L_{0}\cup L_{1}}])\cong HF(L_{1};\mathbb{Z}(\pi_{1}W)).$$ 
\end{theorem}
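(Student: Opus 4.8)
The plan is to reduce the theorem to the corresponding statement with $\mathbb{Z}_2$-coefficients, namely \cite[Theorem 2.2.2]{BiC}, by carrying the twisted-coefficient system $\mathbb{Z}(\pi_1 W)$ through the same algebraic machinery that Biran and Cornea use. First I would recall the setup: to the cobordism $W$ one associates the Lagrangian with cylindrical ends $\overline{W}\subset \widetilde{M}$, and one perturbs the ends vertically by a function $f_S$ as in the work of Biran--Cornea, so that the Floer complex $CF(\overline{W},\overline{W};\mathbb{Z}(\pi_1 W),[f_S])$ is generated (over $\mathbb{Z}(\pi_1 W)$) by the intersection points of $\overline{W}$ with a Hamiltonian pushoff, with the differential counting pseudo-holomorphic strips weighted by the homotopy class of the concatenation of the strip boundary with fixed reference paths — this is the standard twisted construction, and the orientability and chosen spin structure are exactly what is needed to orient the relevant moduli spaces so the count is well-defined over $\mathbb{Z}(\pi_1 W)$ rather than only $\mathbb{Z}_2$ (the orientation bookkeeping is deferred to section \ref{Chapter orientations}). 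The key point is that all the geometric input — compactness, gluing, the structure of the moduli spaces of strips with boundary on $\overline{W}$, and the placement of intersection points near the ends dictated by the choice of $f_S$ — is identical to \cite{BiC}; only the coefficient ring changes.

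Next I would establish the two vanishing statements $HF(\overline{W},\overline{W};\mathbb{Z}(\pi_1W),[f_{L_0}])\cong 0$ and similarly for $f_{L_1}$. In Biran--Cornea these follow because the perturbation $f_{L_i}$ pushes the relevant ends off themselves so that there are no intersection points of a given type at one end, which forces an acyclicity (or directly an empty complex after a deformation) argument; I would run the same argument verbatim, observing that acyclicity of a complex of free $\mathbb{Z}(\pi_1W)$-modules is detected by the same chain-homotopy that Biran--Cornea produce — the homotopy operator counts the same strips and is simply promoted to a $\mathbb{Z}(\pi_1W)$-linear map once orientations are fixed. For the chain of isomorphisms $HF(L_0;\mathbb{Z}(\pi_1W))\cong HF(\overline{W},\overline{W};\mathbb{Z}(\pi_1W),[f_{\emptyset / L_0\cup L_1}])\cong HF(L_1;\mathbb{Z}(\pi_1W))$, the mechanism in \cite{BiC} is a long exact sequence (or a filtration / iterated cone description) relating the four choices of $S$; the connecting maps and the identification of the associated graded pieces with $HF(L_0)$ and $HF(L_1)$ are again continuation-type maps and cobordism maps counting holomorphic strips, all of which are $\mathbb{Z}(\pi_1W)$-linear. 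Invoking the two vanishing results kills the undesired graded pieces and collapses the sequence to the claimed isomorphisms. Here I must be slightly careful that $\pi_1(L_i)\to\pi_1(W)$ need not be injective, so "$HF(L_i;\mathbb{Z}(\pi_1 W))$" means the Floer homology of $L_i$ with the local system pulled back from $\pi_1(W)$ along the inclusion; with that reading the cobordism maps are defined and the argument goes through.

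The main obstacle I anticipate is \emph{not} the formal algebra but the coherent orientation of the moduli spaces over the noncommutative ring $\mathbb{Z}(\pi_1W)$: one needs that the signed counts of strips, and of the strips entering the continuation maps and chain homotopies, assemble into genuine chain maps and chain homotopies with $\partial^2=0$, and that the group-ring weights are consistent under gluing. This is where orientability of $W$ and the chosen spin structure enter, following \cite{FOOO} and \cite{Le2}, and it is precisely the content postponed to section \ref{Chapter orientations}; modulo that input, every step above is a transcription of \cite{BiC} with $\mathbb{Z}_2$ replaced by $\mathbb{Z}(\pi_1W)$. A secondary technical point is checking that the vertical perturbations $f_S$ and the monotonicity hypothesis still guarantee the a priori energy bounds and the absence of bubbling needed for the moduli spaces to be compact — but monotonicity of the cobordism is assumed, and Biran--Cornea's argument that disc bubbles of Maslov index $\le 0$ do not occur and that index-$2$ discs contribute in a controlled way is insensitive to the coefficient ring, so this reduces to citing their analysis.
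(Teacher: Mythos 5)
Your overall strategy (run the Biran--Cornea machinery with $\mathbb{Z}(\pi_{1}W)$-coefficients, defer the orientation/spin bookkeeping to the later section, and get the two vanishing statements by displacing the $f_{L_{i}}$-perturbed copy entirely off $\overline{W}$ by a horizontal isotopy so that the complex is literally zero) is exactly what the paper does; the vanishing half of your argument matches the paper's proof. Where you diverge is the second half: you propose to obtain $HF(L_{0};\mathbb{Z}(\pi_{1}W))\cong HF(\overline{W},\overline{W};[f_{\emptyset}])\cong HF(L_{1};\mathbb{Z}(\pi_{1}W))$ by an algebraic mechanism --- a long exact sequence or iterated-cone/filtration relating the four choices of $S$, collapsed by the two vanishing results --- whereas the paper avoids any exact sequence. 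It instead exhibits two horizontal isotopies, both staying in the path component $[f_{\emptyset}]$, one of which pushes all intersection points of the pair $(\overline{W},\phi_{1}^{f_{\emptyset}\circ\pi}(\overline{W}))$ into the negative end so that the complex becomes $CF(L_{0},\psi_{1}(L_{0});\mathbb{Z}(\pi_{1}W),\widetilde{\mathbf{J}})$, and the other of which pushes them into the positive end giving $CF(L_{1},\psi_{1}'(L_{1});\mathbb{Z}(\pi_{1}W),\widetilde{\mathbf{J}})$; invariance of the homology under horizontal isotopies (Proposition 2.2) then yields both isomorphisms at once. Your cone route is viable (it is how Biran--Cornea treat cobordisms with many ends) but requires you to actually construct the filtration of $CF(\overline{W},\overline{W};[f_{S}])$ by the regions where the generators sit and to identify its graded pieces with the end complexes over $\mathbb{Z}(\pi_{1}W)$, which is strictly more work than the direct displacement argument for an elementary cobordism; the paper's route buys simplicity, yours would buy the generalization to multi-ended cobordisms. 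Your remark that $HF(L_{i};\mathbb{Z}(\pi_{1}W))$ must be read as Floer homology with coefficients induced along $\pi_{1}(L_{i})\to\pi_{1}(W)$ is consistent with the paper, which simply defines it as the homology of $CF(L_{i},\psi_{1}(L_{i});\mathbb{Z}(\pi_{1}W),\mathbf{J})$.
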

As main consequences we will show the following corollaries:
\begin{corollary}\label{Col1}
If $(W, L_{0}, L_{1})$ is an orientable and spin, exact Lagrangian cobordism and $(j_{i})_{\#}:\pi_{1}(L_{i})\hookrightarrow \pi_{1}(W)$ is an isomorphism for $i= 0,1$, then $(W; L_{0}, L_{1})$ is an h-cobordism.
\end{corollary}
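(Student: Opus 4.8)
The plan is to convert the Floer-homological vanishing of Theorem~\ref{Teorema} into the purely topological statement $H_*(W,L_i;\mathbb{Z}(\pi_1 W))=0$, and then run the relative Hurewicz theorem on universal covers. The decisive hypothesis beyond Theorem~\ref{Teorema} is \emph{exactness}, which is what lets one replace Floer homology by ordinary (twisted) singular homology.

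First I would use exactness of $(W;L_0,L_1)$ to note that $\overline W$ carries no non-constant holomorphic discs, so for a perturbation datum modelled on a Morse function on $\overline W$ the Floer complex of $\overline W$ reduces to a Morse-type complex with coefficients in the local system $\mathbb{Z}(\pi_1 W)$. Tracking the effect of the vertical profile function on the cylindrical ends, as set up in Section~\ref{Chapter Floer homology}, identifies $HF(\overline W,\overline W;\mathbb{Z}(\pi_1 W),[f_{\emptyset}])\cong H_*(W;\mathbb{Z}(\pi_1 W))$ (using $\overline W\simeq W$), and, since the profile $f_{L_0}$ is precisely the one that caps off the $L_0$-end, $HF(\overline W,\overline W;\mathbb{Z}(\pi_1 W),[f_{L_0}])\cong H_*(W,L_0;\mathbb{Z}(\pi_1 W))$, and similarly for $L_1$. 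Theorem~\ref{Teorema} then yields
\[
H_*(W,L_0;\mathbb{Z}(\pi_1 W))=0=H_*(W,L_1;\mathbb{Z}(\pi_1 W)).
\]

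Next I would unwind these twisted homology groups. Since $(j_0)_\#\colon\pi_1(L_0)\to\pi_1(W)$ is an isomorphism, the pullback to $L_0$ of the universal cover $\widetilde W\to W$ is connected and coincides with the universal cover $\widetilde{L_0}$, and $j_0$ lifts to an embedding $\widetilde{L_0}\hookrightarrow\widetilde W$. Hence $H_*(\widetilde W,\widetilde{L_0};\mathbb{Z})\cong H_*(W,L_0;\mathbb{Z}(\pi_1 W))=0$. As $\widetilde W$ and $\widetilde{L_0}$ are simply connected, the relative Hurewicz theorem gives $\pi_*(\widetilde W,\widetilde{L_0})=0$ for all $*$, so $\widetilde{L_0}\hookrightarrow\widetilde W$ is a weak homotopy equivalence; together with the $\pi_1$-isomorphism this makes $j_0\colon L_0\hookrightarrow W$ a weak homotopy equivalence, hence a homotopy equivalence since $L_0$ and $W$ have the homotopy type of CW complexes. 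The identical argument applies to $j_1\colon L_1\hookrightarrow W$, so by definition $(W;L_0,L_1)$ is an h-cobordism.

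The main obstacle is the first step: making rigorous, over the full integral group ring $\mathbb{Z}(\pi_1 W)$ and with signs, that in the exact case the cobordism Floer homology with the vertical perturbation $f_{L_i}$ computes $H_*(W,L_i;\mathbb{Z}(\pi_1 W))$. This rests on the explicit Morse model for the Floer complex of a cobordism developed in Section~\ref{Chapter Floer homology}, a careful analysis of the generators and the differential near the cylindrical ends, and the orientation/spin bookkeeping needed to work over $\mathbb{Z}(\pi_1 W)$ rather than over $\mathbb{Z}_2$; once that identification is in hand, the remainder is routine algebraic topology.
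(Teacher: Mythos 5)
Your proposal is correct and follows essentially the same route as the paper: identify $HF(\overline W,\overline W;\mathbb{Z}(\pi_1W),[f_{L_i}])$ with the $\mathbb{Z}(\pi_1W)$-Morse (i.e.\ relative cellular) homology $H_*(\widetilde W,\widetilde L_i)$ via the exactness-based Morse model of Proposition \ref{PropCob}, invoke the vanishing from Theorem \ref{Teorema}, and then upgrade the $\pi_1$-isomorphism plus acyclicity of the pair of universal covers to a homotopy equivalence. The only cosmetic difference is that you run the relative Hurewicz argument directly where the paper cites Whitehead's Theorem 3 of \cite{Wh}, which is the same argument packaged as a citation.
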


\begin{corollary}\label{Col2}
If $(W; L_{0}, L_{1})$ is an orientable and spin, exact Lagrangian cobordism with $dim(W)> 5$ and $L_{i}$ ($i= 0,1$), $W$ are simply connected, then $(W; L_{0}, L_{1})$ is a Lagrangian pseudo-isotopy.  
\end{corollary}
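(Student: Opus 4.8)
The plan is to combine Corollary \ref{Col1} with the classical simply-connected h-cobordism theorem of Smale, i.e. the $\mathrm{Wh}(1)=0$ case of the s-cobordism theorem recalled in Section \ref{Chapter Simple h-theory}; in this generality none of the delicate Floer-theoretic torsion computations of Section \ref{chapitre whitehead} are needed.

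First I would check that the hypotheses of Corollary \ref{Col1} are satisfied. Since $L_0$, $L_1$ and $W$ are simply connected, each map $(j_i)_\#\colon \pi_1(L_i)\to\pi_1(W)$ is the (trivial) isomorphism $0\to 0$ for $i=0,1$, and $(W;L_0,L_1)$ is orientable, spin and exact by assumption. Hence Corollary \ref{Col1} applies and $(W;L_0,L_1)$ is an h-cobordism, so that both inclusions $L_i\hookrightarrow W$ are homotopy equivalences. Next, because $\pi_1(W)=1$ the Whitehead group $\mathrm{Wh}(\pi_1 W)=\mathrm{Wh}(1)$ is trivial, so the Whitehead torsion $\tau(W,L_0)\in\mathrm{Wh}(\pi_1 W)$ of this h-cobordism automatically vanishes; equivalently, $(W;L_0,L_1)$ is an s-cobordism. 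Since $\dim W>5$, the s-cobordism theorem then produces a diffeomorphism of cobordisms
\[
\Phi\colon \bigl([0,1]\times L_0;\,L_0,\,L_0\bigr)\;\xrightarrow{\ \cong\ }\;(W;L_0,L_1)
\]
restricting to the identity on $\{0\}\times L_0=L_0$; in particular $L_1\cong L_0$ and $W$ is diffeomorphic to the trivial cobordism.

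Finally, the existence of such a $\Phi$ is precisely what it means for the Lagrangian cobordism $(W;L_0,L_1)$ — which already carries its Lagrangian embedding into $([0,1]\times\mathbb{R})\times M$ — to be a Lagrangian pseudo-isotopy, since the definition only requires a \emph{smooth} trivialization of the underlying cobordism and not a symplectic one. Therefore $(W;L_0,L_1)$ is a Lagrangian pseudo-isotopy. I do not expect any genuine obstacle here: the hard part of the Main Theorem, namely identifying the Whitehead torsion of the cobordism with the torsion of the $\mathbb{Z}(\pi_1 W)$-Floer complex and proving the latter invariant under horizontal isotopies (Theorem \ref{theorem invariance}), is vacuous in the simply-connected setting because $\mathrm{Wh}(1)=0$. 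Thus this corollary is essentially immediate from Corollary \ref{Col1} together with Smale's theorem.
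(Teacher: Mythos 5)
Your proof is correct and follows essentially the same route as the paper: apply Corollary \ref{Col1} (whose fundamental-group hypothesis is trivially satisfied in the simply connected case) to conclude that $(W;L_0,L_1)$ is an h-cobordism, and then invoke Smale's h-cobordism theorem — equivalently, as you phrase it, the $\mathrm{Wh}(1)=0$ case of the s-cobordism theorem — to obtain the smooth trivialization, which is all the definition of Lagrangian pseudo-isotopy requires. No discrepancy with the paper's argument.
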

\begin{remark}
Corollary \ref{Col2} has been shown in \cite{Ta} for the cotangent bundle using different techniques involving previous work of Abouzaid \cite{Ab} and Kragh \cite{Kr}.

To extend the result of corollary \ref{Col2} to the non-simply connected setting, it is necessary the study of the Whitehead torsion, in order to apply the s-cobordism theorem. 
\end{remark}
\subsection{The Floer complex with $\mathbb{Z}(\pi_{1}L_{0})$-coefficients for Lagrangian intersections}
\subsubsection{Setting} 

Given a Lagrangian $L \subset M$ there are two canonical homomorphisms, the symplectic area 
\begin{align*}
\omega:\pi_{2}(M,L)&\longrightarrow \mathbb{R},\\
[u] &\mapsto \omega(u) := \int_{D^{2}} u^{*}\omega
\end{align*}
and the Maslov index:
$$ \mu:\pi_{2}(M,L)\longrightarrow
\mathbb Z.$$
The \textit{minimal Maslov number} of $L$ is the integer $N_{L}$ defined by
$$N_{L} := \text{min} \{\mu(A) > 0 \hspace*{0.2cm} | \hspace*{0.2cm} A\in \pi_{2}(M,L)\}.$$
\begin{definition}A Lagrangian submanifold $L\subset M$ is monotone if there exist a constant $\rho > 0$, such that,$$\omega(A) = \rho \mu(A),$$ for all $A\in \pi_{2}(M,L)$ and $N_{L} \geq 2$. We call $\rho$ the monotonicity constant.
\end{definition}
The Lagrangian cobordisms $(W, L_{0}, L_{1})$, are orientable, monotone ($W \subset (\tilde{M}, \tilde{\omega})$ is a monotone Lagrangian) and spin and we assume that the choice of spin structure on $W$, is such that when we restricted it to $L_{i}$ it coincides with the chosen spin structures on $L_{i}$ for $i = 0,1$.

A fundamental assumption to have a well defined Floer complex is that the minimal Maslov number of any Lagrangian $L$, $N_{L}$ is strictly larger than two \textbf{$N_{L} > 2$}.
We will explain this latter.

\subsubsection{The Floer complex with $\mathbb{Z}(\pi_{1}L_{0})$-coefficients for Lagrangian intersections}
Let $(L_{0}, L_{1}) \subset (M, \omega)$ be a pair of Lagrangian submanifolds. In this subsection we recall the construction of the Floer complex with $\mathbb{Z}(\pi_{1}L_{0})$-coefficients associated to the pair $(L_{0}, L_{1})$. We also show the invariance under Hamiltonian perturbation of the given homology. We follow the exposition in \cite{BiC}.

The Floer complex with twisted coefficients has been studied before in \cite{Su} and a similar version called lifted Floer complex is defined in \cite{Da}.

\textit{Ring of coefficients and set of generators.} Consider the following rings:

\textit{The integral group ring} $\mathbb{Z}(G)$ of the group $G$ (in our case $G =\pi_{1}L_{0}$), is the set of finite formal sums:
$$\mathbb{Z}(G) :=\{ \sum_{i} a_{i}g_{i} \mid n_{i} \in \mathbb{Z}, \hspace{0.2cm} g_{i}\in G\},$$  
with the natural addition and multiplication.

\textit{The universal Novikov ring} is denoted by $\mathcal{A}$; $$\mathcal{A}:=\{ \sum_{k=0}^{\infty}a_{k}T^{\lambda_{k}}\hspace{0.2cm} |\hspace{0.2cm}
a_{k}  \in \mathbb{Z}, \lim_{k\rightarrow \infty}\lambda_{k} = \infty \}$$ with the addition and multiplication of power series.

The set of generators is a subset of the space of paths in $M$ connecting $L_{0}$ with $L_{1}$,
$$\mathcal{P}(L_{0}, L_{1}) = \{ \gamma \in C^{0}([0,1],M)
\hspace{0.2cm}|\hspace{0.2cm} \gamma(0) \in L_{0}, \gamma(1)\in L_{1} \}.$$
In particular $L_{0}\cap L_{1} \subset \mathcal{P}(L_{0}, L_{1})$.
For $\eta \in \pi_{0}(\mathcal{P}(L_{0}, L_{1}))$, let $\mathcal{P}_{\eta}(L_{0},
L_{1})$ denote the path connected component of $\eta$.

If $L_{0}\pitchfork L_{1}$ the set of generators of the complex is $L_{0}\cap L_{1}$. If not, let $H:M\times [0,1]\rightarrow \mathbb{R}$ be a Hamiltonian function with
Hamiltonian flow $\psi^{H}_{t}$ such that $\psi^{H}_{1}(L_{0})$ is transverse to $L_{1}$.

Denote $\mathcal{O}_{\eta}(H) = \{ \gamma \in \mathcal{P}_{\eta}(L_{0} ,L_{1}) \hspace{0.2cm}|\hspace{0.2cm} \exists x \in L_{0}, \gamma(t) =
\psi^{H}_{t}(x)\},$ the space of paths connecting $L_{0}$ with $L_{1}$ that are orbits of the Hamiltonian flow $\psi^{H}_{t}$. 

Fixing $\eta$ and $H$, the $\mathbb{Z}(\pi_{1}L_{0})$-Floer complex is the complex generated by the subset of orbits of the Hamiltonian flow $\psi^{H}_{t}$, $ \mathcal{O}_{\eta}(H)$.
$$CF_{\eta}((L_{0} ,L_{1}; \mathbb{Z}(\pi_{1}L_{0}), H, \mathbf{J}) = (\mathbb{Z}(\pi_{1}L_{0})\otimes\mathcal{A} \langle\mathcal{O}_{\eta}(H)\rangle, d).$$ 

\textit{The differential.} The differential is defined as usual, by counting elements in the moduli spaces of (perturbed) pseudo-holomorphic Floer strips with a weight given by an element in the integral group ring $\mathbb{Z}(\pi_{1}L_{0})$, following the same construction that \cite{Su},\cite{BaC}, and a similar version in \cite{Da}. 

Denote by $\mathcal{J}_{\omega}$ the space of $\omega$-compatible almost complex structures on $M$. We choose a generic, time dependent almost complex structure $\mathbf{J}= \{ J_{t} \}_{t \in [0,1]}$ (with $J_{t} \in \mathcal{J}_{\omega}$ for all $t$).

For any pair $\gamma_{-}, \gamma_{+} \in \mathcal{O}_{\eta}(H)$, denote by $\mathcal{M}(\gamma_{-}, \gamma_{+};H,\mathbf{J})$
the space consisting of maps $u \in
C^{\infty}(\mathbb{R}\times[0,1],M)$ such that:
\begin{enumerate}
\item The map $u$ satisfies the equation:      
\begin{equation}
\frac{\partial u (s,t)}{\partial s} + J_{t}(u(s,t))\frac{\partial u (s,t)}{\partial t} + \nabla^{t}H_{t}(u(s,t)) =0
\end{equation}
\item 
$u(s,i) \in L_{i}$ for $i = 0,1$,
\item 
 $\lim\limits_{s \to -\infty}u(s,t) = \gamma_{-}(t), \lim\limits_{s\to \infty }u(s,t) = \gamma_{+}(t)$,
\item The energy of $E(u)$, is bounded:
$$ E(u) = \int_{\mathbb{R} \times [0,1]} \vert \frac{\partial u (s,t)}{\partial s} \vert dsdt < \infty.$$
\end{enumerate}

The space $\hat{\mathcal{M}}(\gamma_{-}, \gamma_{+};H,\mathbf{J}) = \mathcal{M}(\gamma_{-}, \gamma_{+};H,\mathbf{J})/\mathbb{R}$ denotes the quotient by the $\mathbb{R}$-action on $\mathcal{M}(\gamma_{-}, \gamma_{+};H,\mathbf{J})$ given by reparametrization: $\tau \in \mathbb{R}$ acts by $u \mapsto u(s - \tau, t)$.

Denote by $\mathcal{M}^{n}(\gamma_{-}, \gamma_{+};H,\mathbf{J})$ and $\hat{\mathcal{M}}^{n}(\gamma_{-}, \gamma_{+};H,\mathbf{J})$ their $n$-dimensional component. For a generic choice $(H,\mathbf{J})$ the spaces $\hat{\mathcal{M}}^{0}(\gamma_{-}, \gamma_{+};H,\mathbf{J})$ and $\hat{\mathcal{M}}^{1}(\gamma_{-}, \gamma_{+};H,\mathbf{J})$ are manifolds \cite{Oh},\cite{Le3}. For
$u\in \hat{\mathcal{M}}^{0}(\gamma_{-}, \gamma_{+};H,\mathbf{J})$  
the space \begin{center}
$\hat{\mathcal{M}}^{0}(\gamma_{-}, \gamma_{+};H,\mathbf{J}, [u]) = \{ v \in \hat{\mathcal{M}}^{0}(\gamma_{-}, \gamma_{+};H,\mathbf{J}) \mid [v] = [u] \in \pi_{2}(M, L_{0} \cup \gamma_{-} \cup L_{1} \cup \gamma_{+})\}$
\end{center} 
is a compact manifold.

To involve the integral group ring $\mathbb{Z}(\pi_{1}L_{0})$ in the construction, consider the space obtained from $L_{0}$ by contracting to a point an embedded path $w(t):[0, 1] \rightarrow L_{0}$, passing through each point in $\{ \gamma(0) \hspace*{0.2cm} |  \hspace*{0.2cm} \gamma
\in \mathcal{O}_{\eta}(H)\}$. Denote the resulting space by $L^{*}_{0} = L_{0}/ w \sim *$, and note that it has the same homotopy type as $L_{0}$.

There is a natural map defined as follows:
\begin{equation}\label{def-teta}
 \Theta:\mathcal{M}(\gamma_{-}, \gamma_{+};H,\mathbf{J})\longrightarrow
\Omega(L^{*}_{0}), \hspace*{0.7cm} u\mapsto u(s,0),
\end{equation}
where $\Omega(L^{*}_{0})$ is the loop space of $L^{*}_{0}$.

The differential of the $\mathbb{Z}(\pi_{1}L_{0})$-Floer complex is given by:
\begin{equation}\label{differential}
d(\gamma_{-}) =
\sum_{\gamma_{+} \in \mathcal{O}_{\eta}(H)} \sum_{\hat{u} \in \hat{\mathcal{M}}^{0}(\gamma_{-}, \gamma_{+};H,\mathbf{J})}
\text{sign}(u)[\Theta(u)]T^{\omega(u)}\gamma_{+}.
\end{equation}
Where $u \in \mathcal{M}^{1}(\gamma_{-}, \gamma_{+};H,\mathbf{J})$ is a representative of $\hat{u} \in \hat{\mathcal{M}}^{0}(\gamma_{-}, \gamma_{+};H,\mathbf{J})$. Note that this is well-defined since for a fixed homotopy class $[u]$, the space $\hat{\mathcal{M}}^{0}(\gamma_{-}, \gamma_{+};H,\mathbf{J}, [u])$ is an oriented and compact $0$-dimensional manifold so the expression in the right side of equation (\ref{differential}) makes sense (see section 4 for definition of sign$(u)$). Moreover, the homotopy class of the loop defined by the map $\Theta$ is invariant under the $\mathbb{R}$-action.

We do not discuss the way we can assign a grading to the Floer complex in this setting, but we will do it in the next section when working with exact Lagrangians. 

\begin{remark}
\begin{enumerate}
\item When the pair $(L_{0}, L_{1})$ intersect transversely, we set $H = 0$. In this case the moduli spaces are composed by $J_{t}$-holomorphic strips connecting intersection points.

\item A proof of the regularity of the moduli spaces $\mathcal{M}^{n}(\gamma_{-}, \gamma_{+};H,\mathbf{J})$ in the relative case (Lagrangian boundary condition), can be found in \cite[Section 3.2.2]{Le3}. The proof follows the arguments in \cite{FlHoDi}, adapted to the monotone Lagrangian setting.
\end{enumerate}
\end{remark}
  
\subsubsection{The square of the differential is zero}
There is no additional difficulty to see $d^{2} =0$ in this setting. The proof follows the same argument than in the monotone setting with coefficients ring $\mathcal{A}$. 

The only difference is that if the minimal Maslov number $N_{L_{i}} = 2$ for $i = 0,1$, then we can have $d^{2} \neq 0$.  This can be illustrated by the following example from \cite{Da}. 
\begin{figure}[H]
\includegraphics[scale=0.5]{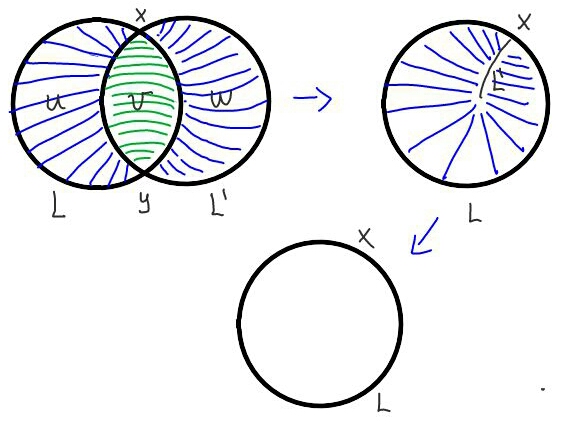}
\caption{Example with $d^{2} \neq 0$.}
\end{figure}
Considering $w(t)$ (the path defining the base point) to be the arc connecting $x$ to $y$ following the counterclockwise orientation. We have $dx = \text{sign}(w)[\Theta(w)]T^{\omega(w)}\neq 0$ and $dy = \text{sign}(v)[\Theta(v)]T^{\omega(v)}\neq 0$, so $d^{2} \neq 0$.

\vspace{0.2cm}
To see $d^{2} =0$ we show the following equation:
\smaller
\begin{equation}\label{d^2=0}
\langle \gamma_{+},d^{2}(\gamma_{-})\rangle =
\sum_{\substack{ \gamma'\\(\hat{u'}, \hat{u''})\in  \hat{\mathcal{M}}^{0}(\gamma_{-}, \gamma')\times
\hat{\mathcal{M}}^{0}(\gamma', \gamma_{+})}}
\text{sign}(u')
\text{sign}(u'')[\Theta(u')][\Theta(
u'')] T^{\omega(u') + \omega(u'')}=0.
\end{equation}
\normalsize
Since the pair $(H, \mathbf{J})$ is assumed to be generic and $N_{L_{i}} > 2$ for $i = 0,1$, the Gromov compactness, the gluing theorem and a choice of compatible orientations (with gluing) on the 1-dimensional unparametrized moduli spaces imply the following decomposition of the moduli spaces for a fixed homotopy class $[u]$: $$\partial\overline{\mathcal{\hat{M}}}^{1}(\gamma_{-}, \gamma_{+};H,\mathbf{J}, [u])=
\bigcup_{\gamma', [u'] + [u''] = [u]} \hat{\mathcal{M}}^{0}(\gamma_{-}, \gamma';H,\mathbf{J}, [u'])\times \hat{\mathcal{M}}^{0}(\gamma',
\gamma_{+};H,\mathbf{J}, [u'']).$$ 
In addition, $[ - ]\circ\Theta$ ( the homotopy class off a the loop given by $\Theta$), is constant on a fixed class $[u]$ and observe that $[\Theta(u')][\Theta(
u'')] = [\Theta(u'\#u'')]$:

For a fixed $\gamma_{0} \in \mathcal{O}_{\eta}(H)$, let $\gamma_{0}(0)$  be the base point of $\Omega(L_{0})$. The map $w(t)$ defines a path connecting $\gamma(0)$ with $\gamma_{0}$, for any $\gamma \in \mathcal{O}_{\eta}(H)$. Denote by $w_{\gamma}(t)$ this path.
Then we have that 
\begin{align*}
[\Theta(u')][\Theta(
u'')] =& [(w_{\gamma_{-}}(t))^{-1}u'(s,0)w_{\gamma'}(t)][(w_{\gamma'}(t))^{-1}u''(s,0)w_{\gamma_{+}}(t)]\\
 =&  [(w_{\gamma_{-}}(t))^{-1}u'(s,0)\#u''(s,0)w_{\gamma_{+}}(t)]\\
 =& [\Theta(u'\#u'')].
\end{align*}

Since the signed sum of all boundary components of a 1-dimensional manifold is zero, we have that (\ref{d^2=0}) is zero for each $\gamma_{+}$ and therefore $d^{2} = 0$.

The homology of this complex is denoted by $HF_{\eta}(L_{0}, L_{1};\mathbb{Z}(\pi L_{0}),(H, \mathbf{J}))$.

The complex $CF(L_{0}, L_{1};\mathbb{Z}(\pi L_{0}),(H;\mathbf{J}))$ is defined by considering the sum over all the components $\eta
\in \pi_{0}(\mathcal{P}(L_{0}, L_{1}))$, 
$$CF(L_{0}, L_{1};\mathbb{Z}(\pi L_{0}),(H,\mathbf{J})) = \oplus_{\eta}CF_{\eta}(L_{0}, L_{1};\mathbb{Z}(\pi L_{0}),(H,\mathbf{J}))$$ 
and the corresponding homology is denoted $HF(L_{0}, L_{1};\mathbb{Z}(\pi L_{0}),(H,\mathbf{J}))$.

The resulting homology is independent on the choice of regular pairs $(H, \mathbf{J})$. The proof of this is completely analogous to the monotone case.

We will recall the proof of the invariance when changing the Hamiltonian $H$, that turns out to be  equivalent (by naturality of Floer equation) to the invariance of the homology under Hamiltonian perturbations of a the Lagrangian $L_{1}$, assuming $L_{0} \pitchfork L_{1}$.

\subsubsection{Invariance under Hamiltonian perturbations}
We follow the exposition in Section 3.2 of \cite{BiC}.

Let $\{\varphi_{t}\}_{t \in [0,1]}$ be a Hamiltonian isotopy with $\varphi_{0}= 1$ associated to a Hamiltonian function $G$.
The isotopy $\varphi_{t}$ induces a map:
\begin{equation}
\varphi_{*}: \pi_{0}(\mathcal{P}(L_{0}, L_{1}))\longrightarrow \pi_{0}(\mathcal{P}(L_{0}, \varphi_{1}(L_{1}))), \hspace*{0.2cm} \eta = [\gamma] \mapsto [\varphi_{t}(\gamma(t))].
\end{equation}

Assume that $L_{0} \pitchfork L_{1}$ and $L_{0} \pitchfork \varphi_{1}(L_{1})$. Fix the data $(\eta, \mathbf{J})$. To compare the homology of the two complexes $CF_{\eta}(L_{0}, L_{1};\mathbb{Z}(\pi_{1}L_{0}),\mathbf{J})$ and $CF_{\phi_{*}\eta}(L_{0}, \varphi_{1}(L_{1}); \mathbb{Z}(\pi_{1}L_{0}),\mathbf{J})$, a chain map
\begin{equation}
\widetilde{c}_{\varphi}:CF_{\eta}(L_{0}, L_{1};\mathbb{Z}(\pi_{1}L_{0}),\mathbf{J})\rightarrow CF_{\varphi_{*}\eta}(L_{0}, \varphi_{1}(L_{1}); \mathbb{Z}(\pi_{1}L_{0}),\mathbf{J})
\end{equation}
is defined using moving boundary conditions. The moduli spaces with moving boundary conditions were introduced by Oh in \cite{Oh}. 

In order to define this map we need to introduce some notation. 

Let $\beta: \mathbb{R}\longrightarrow [0,1]$, be a smooth function such that $\beta(s) = 0$ for $s \leq 0$, $\beta(s) = 1$ for $s\geq 1$ and $\beta$ is strictly increasing in $(0, 1)$.

For $\gamma_{-} \in L_{0}\cap L_{1}$ and  $\gamma_{+} \in L_{0}\cap \varphi_{1}(L_{1})$, consider $\mathcal{M}_{\varphi}(\gamma_{-}, \gamma_{+})$ to be the moduli space of maps $u \in
C^{\infty}(\mathbb{R}\times[0,1],M)$ such that:
\begin{enumerate}
\item For all $s$, $u(s,0) \in L_{0}$ and $u(s,1) \in \varphi_{\beta(s)}(L_{1})$,
\item $\lim\limits_{s\to -\infty}u(s,t) = \gamma_{-}(t), \lim\limits_{s\to \infty}u(s,t) = \gamma_{+}(t)$,
\item $\bar{\partial}_{\mathbf{J}}u = 0$.
\item $E(u) < \infty$.
\end{enumerate} 

Let $\psi_{t}= (\varphi_{t})^{-1}$ and fix $\gamma_{0}$ a path in the connected component $\eta$. Define the functional:
\begin{equation}
 \Phi_{G}:\mathcal{P}_{\varphi_{*}\eta}(L_{0}, \varphi_{1}(L_{1}))\longrightarrow \mathbb{R}, \hspace*{0.3cm} \gamma\mapsto \int_{0}^{1}
G(\psi_{t}(\gamma(t)))dt - \int_{0}^{1} G(\gamma_{0}(t))dt.
\end{equation}
For each map $u \in \mathcal{M}_{\varphi}(\gamma_{-}, \gamma_{+})$ let $v_{u}:\mathbb{R}\times [0, 1]\longrightarrow M$ denote the map defined by $v_{u}(s, t) = \psi_{t\beta(s)}(u(s, t))$.

The chain map is defined as follows:
\begin{equation}
\widetilde{c}_{\varphi}(\gamma_{-}) = \sum_{\gamma_{+} \in L_{0}\cap \varphi_{1}(L_{1})} \big(\sum_{u \in
\mathcal{M}_{\varphi}^{0}(\gamma_{-}, \gamma_{+})}\text{sign}(u)[\Theta(u)]T^{\omega(v_{u})-
\Phi_{G}(\gamma_{+})} \big)\gamma_{+}.
\end{equation}
In order to obtain a loop $\Theta(u) \in \Omega(L_{0})$, we consider the space $L_{0}/w\sim *$ as in the previous section, where the embedded path $w(t):[0, 1] \rightarrow L_{0}$ passes through each point in $(L_{0}\cap L_{1})\cup (L_{0}\cap\varphi_{1}(L_{1}))$.

To see that the map $\widetilde{c}_{\varphi}(\gamma_{-})$ is a chain map we compare the following equations: 
\begin{align*}
&\langle d(\widetilde{c}_{\varphi}(\gamma_{-})),\gamma_{+}\rangle =\\
& \sum_{\substack{ \gamma' \in L_{0}\cap \varphi_{1}(L_{1})\\(u,\hat{w})\in
\mathcal{M}_{\varphi}^{0}(\gamma_{-}, \gamma')\times \hat{\mathcal{M}}^{0}(\gamma', \gamma_{+})}}\text{sign}(u)\text{sign}(w)[\Theta(u)][\Theta(w)] T^{\omega(v_{u})+\omega(w)-\Phi_{G}(\gamma')}\\
&\langle\widetilde{c}_{\varphi}(d(\gamma_{-})),\gamma_{+}\rangle =\\
& \sum_{\substack{ \gamma'' \in L_{0}\cap L_{1}\\(\hat{u'},w')\in
\hat{\mathcal{M}}^{0}(\gamma_{-}, \gamma'')\times \mathcal{M}_{\varphi}^{0}(\gamma'', \gamma_{+})}}
\text{sign}(u')\text{sign}(w')[\Theta(u')][\Theta(w')]
T^{\omega(v_{w'})+\omega(u')-\Phi_{G}(\gamma_{+})}.
\end{align*}
For generic $\mathbf{J}$ and fixed homotopy class $A \in \pi_{2}(M, L_{0}\cup L_{1}\cup \gamma_{-}\cup \gamma_{+})$, the space $\mathcal{M}_{\varphi}^{1}(\gamma_{-},
\gamma_{+};\mathbf{J}, A)$ admits a compactification into a $1$-dimensional manifold $\mathcal{\overline{M}}_{\varphi}^{1}(\gamma_{-},
\gamma_{+};\mathbf{J}, [u])$, with boundary given by:
\begin{center}
$\bigcup\limits_{\gamma', B + C = A}
\mathcal{M}_{\varphi}^{0}(\gamma_{-}, \gamma';\mathbf{J}, B)\times
\hat{\mathcal{M}}^{0}(\gamma',\gamma_{+};\mathbf{J}, C)$\\
$\cup$\\
$\bigcup\limits_{\gamma'', B + C = A}
\hat{\mathcal{M}}^{0}(\gamma_{-}, \gamma'';\mathbf{J},B)\times
\mathcal{M}_{\varphi}^{0}(\gamma'',\gamma_{+};\mathbf{J}, C ).$ 
\end{center}
The choices of orientations and spin structures on $L_{0}, L_{1}$ induce canonical orientations on this spaces, compatible with the gluing map. 
Moreover, any two boundary components $(u,\hat{w}), (\hat{u'}, w')$ of $\mathcal{\overline{M}}_{\varphi}^{1}(\gamma_{-}, \gamma_{+};\mathbf{J}, [u])$ in the same connected component satisfy $[\Theta(u)][\Theta(w)] = [\Theta(u')][\Theta(w')]$ and $\omega(v_{w'})+\omega(u')-\Phi_{G}(\gamma_{+})=
\omega(v_{u})+\omega((\psi_{t}(w))-\Phi_{G}(\gamma_{+})= \omega(v_{u})+
E((\psi_{t}(w))-\Phi_{G}(\gamma') = \omega(v_{u})+\omega(w)-\Phi_{G}(\gamma').$
Therefor, the sum of the two expressions above vanishes.
In a similar way we define a chain map \begin{equation}
\widetilde{c'}_{\varphi^{-1}}:CF_{\eta}(L_{0}, \varphi_{1}(L_{1}); \mathbb{Z}(\pi_{1}L_{0}),\mathbf{J}), \rightarrow CF_{\varphi^{-1}_{*}\eta}(L_{0}, L_{1};\mathbb{Z}(\pi_{1}L_{0}),\mathbf{J}) 
\end{equation}
such that $\widetilde{c'}_{\varphi^{-1}}$ is a quasi-inverse of $\widetilde{c}_{\varphi}$, concluding that the map;
$$c_{\varphi}:HF_{\eta}(L_{0}, L_{1}; \mathbb{Z}(\pi_{1}L_{0}),\mathbf{J})\rightarrow HF_{\varphi_{*}\eta}(L_{0}, \varphi_{1}(L_{1}); \mathbb{Z}(\pi_{1}L_{0}),\mathbf{J}),$$
is an isomorphism.

We summarize this section in the following proposition: 
\begin{proposition}\label{PropHF}
Let $(L_{0}, L_{1})$ be a pair of oriented and spin  monotone Lagrangians, equipped with a choice of spin structure. Assume that the minimal Maslov number satisfies $N_{L_{i}}> 2$ for $i = 0,1$. Let $(H,\mathbf{J})$ be generic, then the Floer complex $CF(L_{0}, L_{1}; \mathbb{Z}(\pi_{1}L_{0}),(H, \mathbf{J}))$ is a chain complex.\\
Moreover, the homology $H(CF(L_{0}, L_{1}; \mathbb{Z}(\pi_{1}L_{0}),(H, \mathbf{J})))$ does not depend on the choice of generic pair $(H,\mathbf{J})$ and is invariant under Hamiltonian isotopies of $L_{0}$ or $L_{1}$.
\end{proposition}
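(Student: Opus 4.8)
The plan is to follow the three standard pillars of Floer theory --- well-definedness of the differential together with $d^{2}=0$, independence of the auxiliary data $(H,\mathbf{J})$, and invariance under Hamiltonian isotopy of the Lagrangians --- with the one extra ingredient being that every moduli-space argument must also track the $\mathbb{Z}(\pi_{1}L_{0})$-weight carried by the loop $\Theta(u)$.

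For $d^{2}=0$ I would fix $\gamma_{-},\gamma_{+}\in\mathcal{O}_{\eta}(H)$ and a relative class $[u]$, and examine the Gromov compactification $\overline{\hat{\mathcal{M}}}^{1}(\gamma_{-},\gamma_{+};H,\mathbf{J},[u])$. The assumption $N_{L_{i}}>2$ is exactly what forbids disk bubbling: a broken configuration with a nonconstant disk bubble carries Maslov defect at least $N_{L_{i}}\geq 3$, so it sits in codimension $\geq 2$ and cannot occur on the boundary of a $1$-dimensional space; hence that boundary consists precisely of once-broken strips $\hat{\mathcal{M}}^{0}(\gamma_{-},\gamma';H,\mathbf{J},[u'])\times\hat{\mathcal{M}}^{0}(\gamma',\gamma_{+};H,\mathbf{J},[u''])$ with $[u']+[u'']=[u]$, and the gluing theorem shows each such pair really is a boundary point. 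The new point is the weight bookkeeping: the loop class $[\Theta(-)]\in\pi_{1}(L_{0})$ is locally constant on each fixed $[u]$, and the base-point paths $w_{\gamma}$ give the concatenation identity $[\Theta(u')][\Theta(u'')]=[\Theta(u'\#u'')]$, so the two endpoints of each arc of $\overline{\hat{\mathcal{M}}}^{1}$ contribute the same element of $\mathbb{Z}(\pi_{1}L_{0})$ and the same power $T^{\omega(u')+\omega(u'')}$; with orientations chosen compatibly with gluing (this is deferred to section 4), the signed count over $\partial\overline{\hat{\mathcal{M}}}^{1}$ vanishes, and summing over $[u]$ --- a locally finite sum by Gromov compactness and the Novikov condition, which also shows $d$ itself is well-defined --- yields $\langle\gamma_{+},d^{2}\gamma_{-}\rangle=0$.

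For independence of $(H,\mathbf{J})$ I would run the usual continuation argument: interpolate between two generic pairs through a generic $s$-dependent family constant near $s=\pm\infty$, count index-$0$ solutions of the associated equation with weights $[\Theta(u)]T^{\omega(u)}$ (using a single model $L^{*}_{0}=L_{0}/w\sim *$ whose path $w$ meets the generators of both complexes), check the chain-map relation via the $1$-dimensional continuation moduli spaces exactly as above, and build a reverse continuation map plus a two-parameter moduli space giving the chain-homotopy equations for the two composites; this step is routine once the weight bookkeeping of the first step is set up. Invariance under a Hamiltonian isotopy of $L_{1}$ (and, by naturality of the Floer equation as noted in the text, the case of changing $H$; the case of $L_{0}$ is analogous) I would get from the moving-boundary-condition construction already in place before the proposition: the map $\widetilde{c}_{\varphi}$ counting solutions with boundary on $\varphi_{\beta(s)}(L_{1})$, weighted by $[\Theta(u)]T^{\omega(v_{u})-\Phi_{G}(\gamma_{+})}$, is a chain map by the displayed structure of $\partial\overline{\mathcal{M}}_{\varphi}^{1}$ together with the energy identity $\omega(v_{w'})+\omega(u')-\Phi_{G}(\gamma_{+})=\omega(v_{u})+\omega(w)-\Phi_{G}(\gamma')$ and the weight identity $[\Theta(u)][\Theta(w)]=[\Theta(u')][\Theta(w')]$; the quasi-inverse $\widetilde{c'}_{\varphi^{-1}}$ built from $\varphi^{-1}$, together with a chain homotopy obtained by concatenating the two moving boundary conditions and contracting the isotopy, shows $c_{\varphi}$ is an isomorphism on homology.

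The main obstacle I anticipate is not the analysis --- bubbles are killed by $N_{L_{i}}>2$, and regularity and gluing are quoted from \cite{Le3}, \cite{Oh} --- but the consistent handling of the $\mathbb{Z}(\pi_{1}L_{0})$-coefficients across all of these moduli problems: one must fix the collapsing path $w$, hence the model $L^{*}_{0}$ and the base-point paths $w_{\gamma}$, once and for all so that it simultaneously meets every orbit or intersection point occurring in $CF(L_{0},L_{1})$, in $CF(L_{0},\varphi_{1}(L_{1}))$, and in every continuation and moving-boundary moduli space used, and then verify that $[\Theta(-)]$ is genuinely locally constant within each fixed homotopy class and additive under every concatenation that arises. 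The orientation and sign compatibility with gluing is the other delicate issue, but it is isolated and postponed to section 4.
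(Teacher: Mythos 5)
Your proposal is correct and follows essentially the same route as the paper: $d^{2}=0$ via the boundary decomposition of the compactified $1$-dimensional moduli spaces in a fixed class $[u]$ with disk bubbling excluded by $N_{L_{i}}>2$ and the weight identity $[\Theta(u')][\Theta(u'')]=[\Theta(u'\#u'')]$ coming from the base-point paths $w_{\gamma}$; independence of $(H,\mathbf{J})$ by the standard continuation argument (which the paper only cites as analogous to the monotone case); and Hamiltonian invariance via the moving-boundary-condition chain map $\widetilde{c}_{\varphi}$ and its quasi-inverse. The points you flag as delicate (fixing the collapsing path $w$ so that it meets all generators involved, and deferring sign/orientation compatibility with gluing to the orientation section) are exactly the ones the paper handles the same way.
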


\begin{remark}
Proposition \ref{PropHF} is a $\mathbb{Z}(\pi_{1}L_{0})$-version of the same result due to Oh \cite{Oh} in the monotone setting. 

For twisted coefficients, the $\mathbb{Z}_{2}(\pi_{1}L_{0})$-version of proposition \ref{PropHF} appears in \cite{Su} for a pair $(L_{0}, L_{1})$ of non-compact Lagrangians with $L_{1} = \phi^{H}_{1}(L_{0})$ and $\omega\mid_{ \pi_{2}(M, L_{0})} = 0$.

In the monotone setting, a similar result for the lifted Floer homology appears in \cite{Da}.
\end{remark}

\subsection{Lagrangians with cylindrical ends}

In this subsection we adapt the chain complex defined in the last section in order to associate such a complex to a
Lagrangian cobordism $(W; L_{0}, L_{1})$. Instead of working with a Lagrangian cobordism we use the corresponding non-compact Lagrangian with cylindrical ends (obtained when extending the ends of the cobordism). The Floer theory for Lagrangian with cylindrical ends was developed in \cite{BiC}.

Given a symplectic manifold $(M, \omega)$, denote by $(\widetilde{M}, \widetilde{\omega})$ the symplectic manifold composed by  $\widetilde{M} = \mathbb{R}^{2}\times M$ and $\widetilde{\omega} = \omega_{st}\oplus \omega$. Denote by $\pi$ the projection $\pi: \mathbb{R}^{2}\times M \rightarrow \mathbb{R}^{2}$. 

\begin{definition}\cite[Section 4.1]{BiC}{
An (elementary) Lagrangian submanifold with cylindrical ends, $\overline{W} \subset (\widetilde{M}, \tilde{\omega})$ is a Lagrangian submanifold without boundary and with the following properties:
\begin{enumerate}
 \item For every $a < b$ the subset $\overline{W}\cap ([a, b]\times \mathbb{R})\times M$ is compact.
\item There exists $R_{-}, R_{+} \in \mathbb{R}$ with $R_{-} \leq R_{+}$ such that:
$$\overline{W}\cap ([R_{+}, \infty)\times \mathbb{R})\times M = ([R_{+}, \infty)\times \{a_{+}\})
\times L_{1}$$ 
$$\overline{W}\cap ((-\infty, R_{-}]\times \mathbb{R})\times M = ((-\infty ,R_{-}]\times
\{a_{-}\})\times L_{0}$$
for some pair of Lagrangian $L_{0 }, L_{1} \subset
M$ and $a_{-}, a_{+} \in \mathbb{R}$.
\end{enumerate}
}\end{definition}

We use the term ``elementary`` since the general definition appearing in \cite{BiC}, considers Lagrangians with more than one positive or negative end.

For every $R \geq R_{+}$, let $E^{+}_{R}(\overline{W}) = \overline{W}\cap ([R, \infty)\times \mathbb{R})\times M$ denote the positive cylindrical end of $\overline{W}$, and similarly for $R \leq R_{-}$, $E^{-}_{R}(\overline{W})$ will denote the negative
cylindrical end of $\overline{W}.$

\begin{definition}
Let $\overline{W}_{0}, \overline{W}_{1} \subset \widetilde{M}$ be two Lagrangians with cylindrical ends. We say that they are
cylindrically distinct at infinity if there exists $R > 0$ such that $\pi(E^{-}_{-R}(\overline{W}_{0}))\cap
\pi(E^{-}_{-R}(\overline{W}_{1})) = \emptyset$ and $\pi(E^{+}_{R}(\overline{W}_{0}))\cap \pi(E^{+}_{R}(\overline{W}_{1})) =
\emptyset.$ 
\end{definition}

Any Lagrangian cobordism $(W; L_{0}, L_{1})$ extends to a Lagrangian with cylindrical ends $\overline{W}$ in the following way;$$\overline{W} = ((-\infty ,0]\times \{1\}\times L_{0}) \cup W \cup ([1, \infty)\times \{1\}\times L_{1}).$$ In the class of Lagrangians with cylindrical ends we consider Hamiltonian isotopies that "fix" the cylindrical ends in the sense of the following definition:

\begin{definition}\cite[Definition 4.1.2]{BiC} An isotopy $\{\overline{W}_{t}\}_{t\in[0,1]}$ of Lagrangian submanifolds with cylindrical ends of
$\widetilde{M}$ is called Horizontal isotopy if there exists a Hamiltonian isotopy $\{\psi_{t}\}_{t\in [0,1]}$ of
$\widetilde{M}$, with $ \psi_{0}= \mathbb{I}$ and the following properties:
\begin{itemize}
 \item $\overline{W}_{t} = \psi_{t}(\overline{W}_{0})$ for all $t\in [0,1]$.
\item There exist real numbers $R_{-} < R_{+}$ and a constant $K > 0$, such that for all $t\in [0,1]$ and $x \in E^{\pm}_{R_{\pm}}(\overline{W}_{0})$,
we have  $\psi_{t}(x) \in E^{\pm}_{R_{\pm}\mp K}(\overline{W}_{0})$.
\item For all $x \in E^{\pm}_{R_{\pm}}(\overline{W}_{0})$, $|d\pi_{x}(X_{t}(x))|< K.$
Here, $X_{t}$ is the time dependent vector field of the flow $\{\psi_{t}\}_{t\in [0,1]}$.
\end{itemize}
\end{definition}

We now proceed to define the $\mathbb{Z}(\pi_{1}W_{1})$-Floer complex associated to a pair of monotone Lagrangians with cylindrical ends $(\overline{W}_{0}, \overline{W}_{1})$.

\subsubsection{The $\mathbb{Z}(\pi_{1}W_{1})$-Floer complex for Lagrangian with cylindrical ends}

Consider $(\overline{W}_{0}, \overline{W}_{1})$ a pair of monotone Lagrangians with cylindrical ends with $N_{\overline{W}_{i}} > 2$ for $i = 0,1$.
In \cite{BiC} it is shown that the Floer homology with $\mathbb{Z}_{2}$-coefficients, $HF(\overline{W}_{0},\overline{W}_{1}, [f])$ is well defined and depends on an additional data $[f]$, coming from the choice of a Hamiltonian perturbation of the ends of
$\overline{W}_{1}$, which makes the image of $\overline{W}_{1}$ under this perturbation cylindrically distinct from $\overline{W}_{0}$ at infinity.

When working with $\mathbb{Z}(\pi_{1}W_{0})$-coefficients, the construction is completely analogous under additional choices of spin structures on the pair  $(\overline{W}_{0}, \overline{W}_{1})$. The construction of the Floer complex follows the schema presented in the previous section. 

Given the non-compactness of the Lagrangian pair, additional choices have to be made to ensure the  compactness of the moduli spaces. 

The Floer complex is defined using the following data:

\begin{itemize}
 \item A fixed component $\eta \in \pi_{0}(\mathcal{P}(\overline{W}_{0}, \overline{W}_{1}))$.
\item A perturbation $(H,f)$ where $H:[0,1]\times\widetilde{M}\rightarrow \mathbb{R}$ and
$f:\mathbb{R}^{2}\rightarrow \mathbb{R}$ are two Hamiltonians satisfying:
\begin{enumerate}
\item $H$ has compact support.
\item For $R_{\pm}$ big enough so that $\overline{W}_{1}$ is cylindrical on $E^{\pm}_{R_{\pm}}(\overline{W}_{1})$, the function $f$ is such that the support of $f$ is contained in a neighborhoods $U_{\pm}$ of $\pi(E^{\pm}_{R_{\pm}}(\overline{W}_{1}))$ where $f(x,y) = \alpha_{\pm}x + \beta_{\pm}$ with $\alpha_{\pm}\in \mathbb{R}$. 
\end{enumerate}
We denote the space of pairs $(H,f)$ as above by
$\mathcal{H}(\overline{W}_{0},\overline{W}_{1})$.
\item An almost complex structure $\mathbf{\widetilde{J}}$ on $(\widetilde{M}, \widetilde{\omega})$. We will restrict to a family of time dependent complex structure $\mathcal{\widetilde{J}}_{B}$ where $B \subset \mathbb{R}^{2}$ is a compact set, and $\mathbf{\widetilde{J}}=\{
\widetilde{J}_{t} \}_{t \in [0,1]}\in \mathcal{\widetilde{J}}_{B}$ satisfies the following properties:
\begin{enumerate}
\item For every $t$, $\widetilde{J}_{t}$ is an $\widetilde{\omega}$-tamed almost complex structure on $\widetilde{M}$.
\item For every $t$, the projection $\pi$ is $(\widetilde{J}_{t}, i)$-holomorphic on $(\mathbb{R}^{2}\setminus
B)\times M$. If $B = \emptyset $, we write $\mathcal{\widetilde{J}}$. 
\end{enumerate}
\end{itemize} 

The additional choice of perturbation $(H, f) \in \mathcal{H}(\overline{W}_{0},\overline{W}_{1})$ guaranties that $\overline{W}_{0}$ and $\phi^{f\circ \pi}_{1}(\overline{W}_{1})$ are cylindrically distinct at infinity.
If the ends of $\overline{W}_{0}$ coincide with those of $\overline{W}_{1}$, the space $\mathcal{H}(\overline{W}_{0},\overline{W}_{1})$ has four connected components fixed by the four possible choices of the function $f$ (depending on the signs of $\alpha_{\pm}$). We denote by $[f]$ the path component of $\mathcal{H}(\overline{W}_{0},\overline{W}_{1})$ associated to a perturbation $(H,f)$. 

The choice of almost complex structure $\mathbf{\widetilde{J}} \in
\mathcal{\widetilde{J}}_{B}$ implies the compactness of the moduli spaces of $J_{t}$-holomorphic strips (in a fixed relative to the boundary homotopy class), since an application of the open mapping theorem shows that any pseudo-holomorphic curve with finite energy has its image in a fixed compact set. For the space of perturbed $J_{t}$-holomorphic strips, with boundary on a Lagrangian pair $(\overline{W}_{0},\overline{W}_{1})$ cylindrically distinct at infinity, we apply a Hamiltonian perturbation $\psi_{1}^{H}$ to the second Lagrangian $\overline{W}_{1}$, such that the pair $(\overline{W}_{0},\psi_{1}^{H}({\overline{W}_{1}}))$ intersect transversely. After a naturality argument we can conclude that the moduli spaces of perturbed $J_{t}$-holomorphic strips is compact. For a detailed proof of compactness in the cobordism setting, see \cite[Lemma 4.2.1]{BiC}.

For generic data $((H, f), \mathbf{\widetilde{J}})$, the Floer complex 
$$CF_{\eta}(\overline{W}_{0},\overline{W}_{1}; \mathbb{Z}(\pi_{1}\overline{W}_{0}),(H, f), \mathbf{\widetilde{J}}) :=
CF_{(\phi^{f\circ \pi})_{*}\eta}(\overline{W}_{0}, \phi_{1}^{f\circ \pi}(\overline{W}_{1}); \mathbb{Z}(\pi_{1}\overline{W}_{0}),H, \mathbf{\widetilde{J}})$$ is well-defined \cite[Section 4.3]{BiC}. As in previous section, $CF(\overline{W}_{0},\overline{W}_{1}; \mathbb{Z}(\pi_{1}\overline{W}_{0}),(H, f), \mathbf{\widetilde{J}})$ denotes the sum along all the class of Hamiltonian chords $\eta \in \pi_{0}(\mathcal{P}(\overline{W}_{0},\overline{W}_{1}))$. From now on, we make no distinction in the notation of a cobordism $(W; L_{0}, L_{1})$ and its associated Lagrangian with cylindrical ends $\overline{W}$, unless it is necessary.

\begin{proposition}\label{Prop}
Let $(W_{0}, W_{1})$ be a pair of oriented and spin monotone Lagrangians with cylindrical ends equipped with a choice of spin structure. Assume that the minimal Maslov number satisfies $N_{W_{i}}> 2$ for $i = 0,1$.
 Let $((H, f), \mathbf{\widetilde{J}})$ be generic.  
 
Then the Floer complex $CF(W_{0},W_{1}; \mathbb{Z}(\pi_{1}W_{0}),(H, f), \mathbf{\widetilde{J}})$ is a chain complex. Moreover, its homology $H(CF(W_{0},W_{1}; \mathbb{Z}(\pi_{1}W_{0}),(H, f), \mathbf{\widetilde{J}}))$ does not depend on the choice of generic pair $(H,\mathbf{\widetilde{J}})$ but depends on the path connected component $[f] \in \pi_{0}(\mathcal{H}(W_{0}, W_{1}))$ and is invariant under horizontal isotopies of $W_{0}$ or $W_{1}$, up to an isomorphism.
\end{proposition}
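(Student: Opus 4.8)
The plan is to reduce each assertion to its counterpart for a compact pair $(L_0,L_1)$, established in Proposition \ref{PropHF}, and to supply in each case the confinement argument of \cite[Lemma 4.2.1]{BiC} that keeps the relevant moduli spaces inside a fixed compact set despite the non-compactness of the $\overline{W}_i$.

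First, that $d^{2}=0$. By definition $CF_{\eta}(\overline{W}_{0},\overline{W}_{1};\cdot)=CF_{(\phi^{f\circ\pi})_{*}\eta}(\overline{W}_{0},\phi_{1}^{f\circ\pi}(\overline{W}_{1});\cdot)$, so after applying the perturbation $\phi_{1}^{f\circ\pi}$ we are counting perturbed $\widetilde{J}_{t}$-holomorphic strips between a Lagrangian pair that is cylindrically distinct at infinity, with $\widetilde{\mathbf J}\in\widetilde{\mathcal J}_{B}$. The open mapping theorem argument of \cite[Lemma 4.2.1]{BiC} (using that $\pi$ is $(\widetilde{J}_{t},i)$-holomorphic outside the compact set $B\subset\mathbb{R}^{2}$) shows every finite-energy strip in a fixed relative homotopy class has image in a fixed compact subset of $\widetilde{M}$. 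On that compact set the usual Gromov compactness, gluing and compatible-orientation analysis applies verbatim; since $N_{\overline{W}_{i}}>2$ no disk or sphere bubbling contributes to the boundary of a $1$-dimensional component, and the $\mathbb{Z}(\pi_{1}\overline{W}_{0})$-weights multiply under gluing, $[\Theta(u')][\Theta(u'')]=[\Theta(u'\#u'')]$, exactly as in the computation of equation (\ref{d^2=0}). Hence the signed count of boundary points of each $1$-dimensional unparametrized moduli space vanishes, and $d^{2}=0$.

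Second, independence of the generic pair $(H,\widetilde{\mathbf J})$ with $[f]$ fixed. Given two regular choices in the same component $[f]$, join them by a generic path of data and form the continuation chain map counting solutions of the $s$-dependent Floer equation weighted by $[\Theta(u)]T^{\omega(u)}$; the same confinement argument keeps these moduli spaces compact, so the standard proof that the continuation map is a chain-homotopy equivalence (the homotopy coming from the associated $1$-parameter moduli spaces) goes through, the Novikov/monotonicity bookkeeping ensuring every sum is finite. The homology therefore depends only on $[f]\in\pi_{0}(\mathcal H(\overline{W}_{0},\overline{W}_{1}))$: varying $f$ inside $[f]$ changes the slopes $\alpha_{\pm}$ only through an isotopy, hence changes the generating set $\mathcal O_{\eta}$ compatibly, whereas passing to a different component alters the asymptotic behaviour of the Hamiltonian chords and in general yields a non-isomorphic complex.

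Third, invariance under horizontal isotopies. Let $\{\overline{W}_{0,t}\}$ be a horizontal isotopy generated by $\{\psi_{t}\}$ as in the definition above. Imitating Oh's moving-boundary-condition construction used in the compact case (the map $\widetilde c_{\varphi}$), define a chain map $CF(\overline{W}_{0,0},\overline{W}_{1};\cdot)\to CF(\overline{W}_{0,1},\overline{W}_{1};\cdot)$ by counting strips with boundary on the moving family, weighted by $[\Theta(u)]\in\pi_{1}\overline{W}_{0}$ and by $T^{\omega(v_{u})-\Phi_{G}(\gamma_{+})}$, with the base path $w$ chosen through all relevant chord endpoints. The two quantitative clauses in the definition of a horizontal isotopy --- that the ends move only within the cylindrical region, $\psi_{t}(E^{\pm}_{R_{\pm}})\subset E^{\pm}_{R_{\pm}\mp K}$, and that $|d\pi_{x}(X_{t}(x))|<K$ there --- are exactly what is needed to rerun the confinement argument for the moving-boundary moduli spaces, so they are again compact; the pair stays cylindrically distinct at infinity throughout (after possibly enlarging $f$), and bubbling is excluded by $N_{\overline{W}_{i}}>2$. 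A quasi-inverse built from the reverse isotopy gives the isomorphism on homology, and since the isotopy may relabel the components $\eta$ and the slope data, the conclusion is naturally stated ``up to an isomorphism''; horizontal isotopies of $\overline{W}_{1}$ are handled symmetrically.

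I expect the main obstacle to be precisely the compactness of the moduli spaces with \emph{moving} Lagrangian boundary conditions in the cylindrical setting: one must check that a horizontal isotopy cannot inject energy into a strip near infinity and that the confinement of \cite[Lemma 4.2.1]{BiC} survives the $s$-dependence of the boundary condition. This is where the bound $K$ in the definition of horizontal isotopy and the choice of $f$ with the correct slopes $\alpha_{\pm}$ must be used with care; the remainder is a routine transcription of the compact case of Proposition \ref{PropHF} together with the $\mathbb{Z}(\pi_{1})$-weight bookkeeping already exhibited.
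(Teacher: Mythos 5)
Your proposal is correct and follows essentially the same route as the paper: the paper's own proof simply notes that once compactness of the moduli spaces is secured by the choice of $\mathbf{\widetilde{J}}\in\mathcal{\widetilde{J}}_{B}$ together with the open mapping theorem argument of \cite[Lemma 4.2.1]{BiC}, everything reduces to Proposition \ref{PropHF}, with the only new ingredient being the chain map with moving boundary conditions induced by the homotopy $f_{\tau}=\beta(\tau)f+(1-\beta(\tau))f'$ for perturbations in the same component $[f]$, exactly the mechanism you invoke. Your write-up is in fact more detailed than the paper's sketch, and your identification of the quantitative clauses in the definition of horizontal isotopy as the point where compactness with moving boundary conditions must be checked matches what the cited reference supplies.
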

Once the compactness of the moduli spaces is guarantied, the proof of this proposition follows from last section. The only additional argument concerns the invariance of the homology for different perturbations $f,f' \in \pi_{0}(\mathcal{H}(W_{0}, W_{1}))$ with $[f] = [f']$. This issue is addressed in the proof of the analogous statement for $\mathbb{Z}_{2}$-coefficients, \cite[Proposition 4.3.1]{BiC}. The idea is to study a chain map defined using moving boundary conditions induced by an homotopy $f_{\tau} = \beta(\tau)f +(1-\beta(\tau))f'$, connecting $f$, with $f'$. 

The homology of the complex $CF(W_{0},W_{1}; \mathbb{Z}(\pi_{1}W_{0}),(H, f), \mathbf{\widetilde{J}})$ is denoted by $$HF(W_{0}, W_{1};\mathbb{Z}(\pi_{1}W_{0}),[f]).$$

\subsubsection{Proof of Theorem \ref{Teorema}}

\begin{proof}[Proof of theorem \ref{Teorema}]
This theorem is a consequence of Proposition \ref{PropHF} and Proposition \ref{Prop}. Let $(W,L_{0}, L_{1})$ be an orientable, spin, monotone, Lagrangian cobordism viewed as a Lagrangian with cylindrical ends.
Let $S$ be an element in the set $\{\emptyset, L_{0}, L_{1}, L_{0}\cup L_{1}\}$. Consider $(H, f_{S}) \in \mathcal{H}(W, W)$ a perturbation where $f_{S}$ is locally given by $f_{S}(x, y) = \alpha_{\pm}x + \beta_{\pm}$ with the property that $\alpha_{+} > 0$ if $L_{1} \subset S$ and $\alpha_{+} < 0$ if not, as well as $\alpha_{-} < 0$ if $L_{0} \subset S$ and $\alpha_{-} > 0$ if not. Such a perturbation makes $W$ and $\phi^{f\circ \pi}_{1}(W)$ cylindrically distinct at infinity. Denote by $W'$ the Lagrangian $\phi^{f_{S}\circ \pi}_{1}(W)$.

Consider the Floer complex $CF(W,W'; \mathbb{Z}(\pi_{1}W),(H, \mathbf{\widetilde{J}}))$. From Proposition \ref{Prop} follows that \\$HF(W , W';\mathbb{Z}(\pi_{1}W),[f_{S}])$ depends only on $[f_{S}]$.

For $S = L_{1}$, notice that there is an horizontal isotopy $\{\psi_{t}(W')\}$ such that $W \cap \psi_{1}(W') = \emptyset$: \begin{figure}[H]
\includegraphics[scale=0.50]{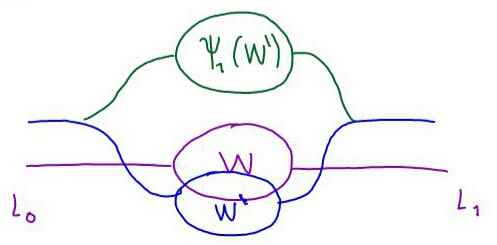}
\caption{$(W, \phi^{f_{L_{0}}\circ \pi}_{1}(W))$}
\end{figure} 

In a similar way, when $S = L_{0}$ we can also find a horizontal isotopy $\{\psi'_{t}(W')\}$ such that $W \cap \psi'_{1}(W') = \emptyset$. Thus,
 $$CF(W, \psi_{1}(W'); \mathbb{Z}(\pi_{1}W),(H,  \mathbf{\widetilde{J}})) = 0 = CF(W, \psi_{1}'(W'); \mathbb{Z}(\pi_{1}W),(H, \mathbf{\widetilde{J}})).$$

From the invariance of the homology under horizontal isotopies (Proposition 2.2), we can conclude that $$HF(W, W;\mathbb{Z}(\pi_{1}W),[f_{L_{0}}]) \cong 0 \cong HF(W,W;\mathbb{Z}(\pi_{1}W),[f_{L_{1}}]).$$

Consider now the case when $S = \emptyset$, (the case $S = \{L_{0}, L_{1}\}$ is completely analogous). Notice that there are horizontal isotopies in the same path component $\{\psi_{t}(W')\}, \{\psi'_{t}(W')\}$ (see figure), such that:
 $$CF(W, \psi_{1}(W'); \mathbb{Z}(\pi_{1}W),(H,  \mathbf{\widetilde{J}})) = CF(L_{0}, \psi_{1}(L_{0}); \mathbb{Z}(\pi_{1}W),\mathbf{\widetilde{J}}),$$ and in the same way $$CF(W, \psi_{1}'(W'); \mathbb{Z}(\pi_{1}W),(H, \mathbf{\widetilde{J}})) = CF(L_{1}, \psi_{1}'(L_{1}); \mathbb{Z}(\pi_{1}W),\mathbf{\widetilde{J}}).$$ 
\begin{figure}[H]
\includegraphics[scale=0.50]{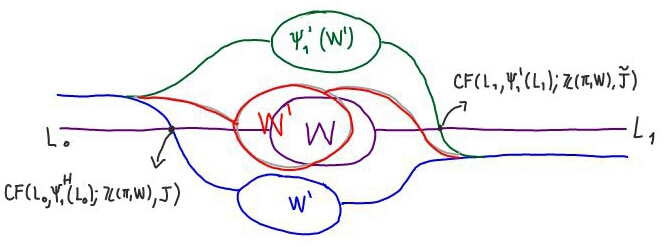}
\caption{$(W, \phi^{f_{\emptyset}\circ \pi}_{1}(W))$}
\end{figure} 
 
From the invariance of the homology under horizontal isotopies (Proposition 2.2), we can conclude that
$$H(CF(L_{0}, \psi_{1}(L_{0}); \mathbb{Z}(\pi_{1}W),\mathbf{\widetilde{J}}))\cong H(CF(L_{1}, \psi_{1}'(L_{1}); \mathbb{Z}(\pi_{1}W),\mathbf{\widetilde{J}})).$$ 
Moreover, the relative homology is independent of the choice of almost complex structure. Denoting by $HF(L_{i};\mathbb{Z}(\pi_{1}W))$ the homology of the complex $CF(L_{i}, \psi_{1}(L_{i}); \mathbb{Z}(\pi_{1}W),\mathbf{J})$, for $i= 0, 1$, we have that 
$$HF(L_{0};\mathbb{Z}(\pi_{1}W))\cong  HF(L_{1};\mathbb{Z}(\pi_{1}W)).$$  
\end{proof}
 
\subsubsection{Exact Lagrangians with cylindrical ends}

In this section we apply the previous results to the class of exact Lagrangian cobordisms.
\begin{definition}
Let $(M, \omega)$ be a symplectic manifold where the two-form $\omega  = d\lambda$ is exact. If the one-form restricted to a Lagrangian $L$ is also exact $\lambda \mid_{TL} = df$, then we say that the Lagrangian $L$ is an exact Lagrangian. 

An exact Lagrangian cobordism is a Lagrangian cobordism $(W; L_{0}, L_{1})$ where $W \subset (\tilde{M}, \tilde{\omega})$ is an exact Lagrangian.
\end{definition}
For an exact Lagrangian submanifold $L \subset (M, \omega)$ the Floer homolgy $HF(L)$ is known to be isomorphic to the Morse homology $H(CM(L,(f, g)))$. Here $CM(L,(f, g))$ denotes the Morse complex of $L$, $(f, g)$ denotes a Morse-Smale pair composed by a Morse-Smale function $f$ and a Riemannian metric $g$. We can expect the $\mathbb{Z}(\pi_{1}L)$-Floer homology  $HF(L;\mathbb{Z}(\pi_{1}L))$ to be related to the homology of the $\mathbb{Z}(\pi_{1}L)$-coefficients Morse complex $H(CM(L,;\mathbb{Z}(\pi_{1}L)(f, g))$.

\begin{definition}\cite[Definition 1.4]{CoR}The $\mathbb{Z}(\pi_{1}L)$-coefficients Morse complex $CM(L;\mathbb{Z}(\pi_{1}L),(f, g))$ is the based finite generated $\mathbb{Z}(\pi_{1}L)$-module chain complex given by 
\begin{align*}
d: CM_{*}(L;\mathbb{Z}(\pi_{1}L),(f, g))= \mathbb{Z}(\pi_{1}L)(Crit_{*}(f)) &\rightarrow CM_{*-1}(L;\mathbb{Z}(\pi_{1}L),(f, g))=\mathbb{Z}(\pi_{1}L)(Crit_{*-1}(f))\\
p &\mapsto \sum_{ q \in Crit(f)_{*-1}}( \sum_{ \alpha \in \pi_{1}L} n^{\tilde{f}, \tilde{g}}(\tilde{p}, \alpha\tilde{q}) \alpha) q.
\end{align*} 
Here $(\tilde{f}, \tilde{g})$ denotes the pullback of $(f, g): L \rightarrow \mathbb{R}$ and $n^{\tilde{f}, \tilde{g}}(\tilde{p}, \alpha\tilde{q})$ denotes the signed sum of the number of connecting flow lines of $(\tilde{f}, \tilde{g})$ between $\tilde{p}$ and $\alpha\tilde{q}$.
\end{definition}
Notice that the homology of the $\mathbb{Z}(\pi_{1}L)$-coefficients Morse complex $H(CM(L;\mathbb{Z}(\pi_{1}L)),(f, g))$ is isomorphic to the singular homology of the universal covering of $L$, denoted by $H(\tilde{L})$. This follows from the observation that the lifted Morse-Smale function defines a cellular decomposition on $\tilde{L}$.

\begin{proposition}
Let $L  \subset (M, \omega)$ be an orientable and spin exact Lagrangian submanifold. Then $HF_{*}(L;\mathbb{Z}(\pi_{1}L)) \cong H_{*}(\widetilde{L})$.
\end{proposition}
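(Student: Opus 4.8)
The plan is to identify the $\mathbb{Z}(\pi_1 L)$-Floer homology of an exact Lagrangian with the $\mathbb{Z}(\pi_1 L)$-Morse homology, which by the remark preceding the statement is $H_*(\widetilde{L})$. The starting point is the standard fact that for an exact Lagrangian $L$, after perturbing $L_1=\psi_1^H(L)$ by a $C^2$-small Hamiltonian coming from a Morse function $f:L\to\mathbb{R}$, the Hamiltonian chords (intersection points of $L$ with $\psi_1^H(L)$) are in bijection with $\mathrm{Crit}(f)$, and there are no nonconstant holomorphic disks or strips with energy below the relevant threshold by exactness (the symplectic area of any strip equals the action difference, which forces small strips). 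So the PSS-type or Floer--Morse comparison identifies the thin Floer strips with Morse trajectories of $(\widetilde{f},\widetilde{g})$ on the universal cover. Concretely, I would invoke Proposition \ref{PropHF} (invariance under Hamiltonian isotopies of $L_1$) to replace $HF(L;\mathbb{Z}(\pi_1 L))$ by the Floer complex for the pair $(L,\psi_1^{\epsilon f}(L))$ with $\epsilon$ small, and then run the Floer continuation/adiabatic-limit argument of Floer (and Oh, in the monotone setting) that degenerates $J$-holomorphic strips to gradient flow lines.

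The key new point compared to the untwisted statement is bookkeeping of the group-ring weight. In the Floer differential the weight attached to a strip $u$ is $[\Theta(u)]\in\pi_1(L^*_0)\cong\pi_1(L)$, the homotopy class of the boundary loop $u(s,0)$ closed up along the chosen base paths $w_\gamma$. In the adiabatic limit, the strip $u_\epsilon$ connecting critical points $p,q$ converges to a gradient trajectory $\gamma_{pq}$ from $p$ to $q$; the boundary loop $u_\epsilon(s,0)$ converges (up to the base paths $w_p,w_q$) to the concatenation $w_p^{-1}\cdot\gamma_{pq}\cdot w_q$, which is precisely the homotopy class picking out which lift $\alpha\widetilde{q}$ the lifted trajectory from $\widetilde{p}$ lands on in the definition of $n^{\widetilde{f},\widetilde{g}}(\widetilde{p},\alpha\widetilde{q})$. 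Hence the Floer differential with $\mathbb{Z}(\pi_1 L)$-weights goes over verbatim to the Morse differential of $CM(L;\mathbb{Z}(\pi_1 L),(f,g))$. The novikov variable $T^{\omega(u)}$ causes no trouble: by exactness $\omega(u)$ is determined by the action difference and for the thin strips is $O(\epsilon)$, and one can either normalize the action functional so these exponents vanish or simply note the comparison is an isomorphism of $\mathbb{Z}(\pi_1 L)\otimes\mathcal{A}$-modules and then specialize/grade appropriately; in the exact case the action is single-valued, so after the standard shift the complex is genuinely $\mathbb{Z}(\pi_1 L)$-linear and matches $CM$ on the nose.

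So the ordered steps are: (1) use Proposition \ref{PropHF} to pass to the pair $(L,\psi_1^{\epsilon f}(L))$ for a Morse--Smale $(f,g)$ and small $\epsilon$; (2) identify generators with $\mathrm{Crit}(f)$; (3) use exactness to bound energy of strips and apply the adiabatic/gluing argument to match low-energy strips with gradient trajectories, ensuring transversality is preserved under the correspondence; (4) check that under this correspondence the weight $[\Theta(u)]T^{\omega(u)}$ matches the group element $\alpha\in\pi_1 L$ appearing in the $\mathbb{Z}(\pi_1 L)$-Morse differential, using the convergence of boundary loops; (5) conclude $CF(L;\mathbb{Z}(\pi_1 L))\cong CM(L;\mathbb{Z}(\pi_1 L),(f,g))$ as chain complexes, hence on homology, and invoke the remark that $H_*(CM(L;\mathbb{Z}(\pi_1 L),(f,g)))\cong H_*(\widetilde{L})$. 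The main obstacle is Step (3)--(4): making the adiabatic-limit/gluing argument precise in the twisted, monotone-with-$N_L>2$ setting (so that no disk bubbling interferes) and verifying that the homotopy-class bookkeeping survives the limit; this is essentially Floer's theorem together with the observation that $[\Theta(\cdot)]$ is continuous under $C^0$-convergence of boundaries, but it needs the monotonicity/Maslov hypotheses to rule out bubbling exactly as in the $d^2=0$ discussion above.
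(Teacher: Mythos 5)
Your proposal is correct and follows essentially the same route as the paper: pass to the pair $(L,\mathrm{graph}(\epsilon\,df))$ in a Weinstein neighborhood, invoke Floer's theorem that for a suitable $\mathbf{J}$ the moduli spaces of strips coincide with spaces of gradient flow lines, match the weights $[\Theta(u)]$ with the deck transformations $\alpha$ in the lifted Morse differential, and conclude via $H_*(CM(L;\mathbb{Z}(\pi_1L)))\cong H_*(\widetilde{L})$. The only superfluous worry is the appeal to monotonicity and $N_L>2$ to rule out bubbling; in the exact case all nonconstant disks and spheres are excluded outright, and the Novikov variable is dispensed with for the same reason, exactly as the paper notes.
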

Before we start the proof of this proposition, we will define the version of the Floer complex for exact Lagrangians used here. For this section we follow the exposition in \cite{Au}.

\textit{Coefficient ring.}

In the definition of the Floer complex we have used the universal Novikov ring $\mathcal{A}$, when working with exact Lagrangians this ring is not necessary anymore.

Consider a pair of exact Lagrangians $(L_{0}, L_{1}) \subset (M, d\lambda)$. There are functions $f_{i}: C^{\infty}(L_{i}; \mathbb{R})$ such that $\lambda\mid_{L_{i}} = df_{i}$ for $i = 0,1$. The symplectic area $\omega(u)$ of any strip with boundary conditions on $L_{0}\cup L_{1}$ connecting two intersection points $x, y$, is given by $\int u^{*}d\lambda = (f_{1}(y)-f_{1}(x))-(f_{0}(y)-f_{0}(x))$. Then, there are uniform bounds on the energy in a fixed moduli space $\mathcal{M}(x,y)$, independently of the homotopy class of the strip $[u] \in \pi_{2}(M, L_{0} \cup L_{1}\cup x \cup y)$. Thus, is not necessary to consider the universal Novikov ring $\mathcal{A}$ in this case. 

The module used in the definition of the Floer complex $CF(L_{0};\mathbb{Z}(\pi_{1}(L_{0}), (H, \mathbf{J}))$ is \\$\mathbb{Z}(\pi_{1}L_{0})(L_{0}\cap L_{1})$. Here, $L_{1} = \psi^{H}_{1}(L_{0})$ is the image of $L_{0}$ under the time-one Hamiltonian diffeomorphism associated to $H$, such that $L_{0}\pitchfork L_{1}$.

\textit{Grading.}

In the module $\mathbb{Z}(\pi_{1}L_{0})(L_{0}\cap L_{1})$, there is a natural graduation defined using the Maslov-Viterbo index.

Let $u: \mathbb{R}\times [0, 1] \rightarrow M$ be a strip with boundary conditions on $L_{0}, L_{1}$ joining two intersection points $x,y \in L_{0}\cup L_{1}$. The Maslov-Viterbo index of $u$, denoted by $\mu(u)$, is the Maslov index of a loop obtained from a trivialization of the bundle $u ^{*}(TM)$. Let $\lambda: [-\infty, \infty ] \rightarrow L_{0}$, $\lambda': [-\infty, \infty ] \rightarrow L_{1}$ be two paths defined by $\lambda(s) = u(s,0)$ and $\lambda'(s) = u(s,1)$. Consider the path of Lagrangians $T_{\lambda(s)}L_{0}$ concatenated with a path of Lagrangians $\lambda''(s) \subset T_{\lambda'(s)}M$ which intersect transversely $T_{\lambda'(s)}L_{1}$ at each $s$. The Maslov-Viterbo index of $u$, is the Maslov index of the loop $T_{\lambda(s)}L_{0}\#(-\lambda''(s))$ after trivialization.

Fix a point $x_{0} \in (L_{0}\cap L_{1})$. Consider the path-connected component of $x_{0}$ in $\mathcal{P}(L_{0}, L_{1})$ denoted by $\mathcal{P}_{x_{0}}(L_{0}, L_{1})$. For any other point $y \in (L_{0}\cap L_{1})\cap \mathcal{P}_{x_{0}}(L_{0}, L_{1})$ we define $\mu(y, x_{0}) = \mu(\tilde{y})$ to be the Maslov-Viterbo index of a path $\tilde{y}$ in $\mathcal{P}_{x_{0}}(L_{0}, L_{1})$ connecting
$y$ with $x_{0}$. This is independent on the choice of path $\tilde{y}$ since the condition $\omega\mid_{\pi_{2}(M, L_{0})}=0$ implies that, for a small Hamiltonian perturbation, all Floer strips defining the Floer homology  are contained in a small tubular neighborhood of $L_{0}$. Thus they can be seen as strips on the cotagent bundle      $T^{*}L_{0}$. For the cotangent bundle the Maslov index is independent on the homotopy class of the strip. 
Assuming that $\mu(x_{0}) = k \in \mathbb{Z}$, we set $\mu(y) := \mu(y, x_{0}) - k$.
\begin{proof}[Proof of Proposition 2.2] 
By the Weinstein theorem there exists a neighborhood of $L$, $U_{L}$ symplectomorphic to a tubular neighborhood of the zero-section. Let $f: L \rightarrow \mathbb{R}$ be a Morse function and for a small $\epsilon > 0$ let $L' = \text{graph}(\epsilon df)$. Note that $L' \subset (T^{*}L, \sum dx_{i} \wedge dy_{i})$ is Hamiltonian isotopic to $L$. The set of intersection points $L\pitchfork L'$ is the set $Crit(f)$. The relation between the Floer and Morse grading is given by $\mu(x)= |x|-\mu(x_{0})$, where $|x|$ denotes the Morse index (see\cite{Fl2}). This implies that the grading of the Floer and the Morse complexes agree up to a shift.
Moreover, from \cite{Fl2} there exist a time dependent almost complex structure $\mathbf{J}$ for which the moduli spaces of Floer strips correspond to moduli spaces of connecting flow lines. 

On the other hand, in \cite[Theorem 13, Appendix B]{Sc} was shown that the orientations of the Morse complex obtained from the geometrical constructions can be extended to canonical orientations induced by the determinant bundle, the last ones are the ones used to define the Floer complex.

Then $CF(L, L'; \mathbb{Z}(\pi_{1}L), \mathbf{J}) = (\mathbb{Z}(\pi_{1}L)(Crit(f)), d) = CM(L; \mathbb{Z}(\pi_{1}L), (f,g_{\mathbf{J}}))$.

Here, $CM(L; \mathbb{Z}(\pi_{1}L))$ denotes the $\mathbb{Z}(\pi_{1}L)$-coefficients Morse complex. 
To see the last equality  $(\mathbb{Z}(\pi_{1}L)(Crit(f)), d) = CM(L; \mathbb{Z}(\pi_{1}L), (f,g_{\mathbf{J}}))$, notice that for $x \in (L \cap L')$, 
\begin{align*}
\sum_{ y, \mu(x, y) = 1 } \sum_{ \hat{u} \in \hat{\mathcal{M}}^{0}(x, y)} \text{sign}(u)[\Theta(u)]y =& \sum_{ y \in Crit(f)_{*-1}}( \sum_{ [\Theta(u)] \in\rho(\Theta(\mathcal{M}^{0}(x, y)))} n^{\tilde{f}, \tilde{g}_{\mathbf{J}}}(\tilde{x}, [\Theta(u)]\tilde{y}) [\Theta(u)]\big) y \\ =& \sum_{ y \in Crit(f)_{*-1}}( \sum_{ \alpha \in \pi_{1}L} n^{\tilde{f}, \tilde{g}_{\mathbf{J}}}(\tilde{x}, \alpha\tilde{y}) \alpha) y.
\end{align*}
Here $\rho: \Omega(L) \rightarrow \pi_{1}(L)$ denotes the projection. 
Now, from the invariance under Hamiltonian perturbations, and the independence on the choices defining the Floer homology, we can conclude that
$$H_{*}(CF(L, L'; \mathbb{Z}(\pi_{1}L), \mathbf{J}))\cong HF_{*}(L;\mathbb{Z}(\pi_{1}L)) \cong H_{*}(CM(L; \mathbb{Z}(\pi_{1}L)))\cong H_{*}(\tilde{L}).$$ 
\end{proof}
We now consider the exact Lagrangian with cylindrical ends associated to an exact Lagrangian cobordism $(W, L_{0}, L_{1})$. For such a Lagrangian the Floer homology depends on the choice of path component $[f]$, defining the perturbation on the  cylindrical ends. Different choices lead to different homologies. 

In order to relate the Floer homology $HF(W;\mathbb{Z}(\pi_{1}(L), [f])$ with the singular homology, let $S$ be as in Theorem \ref{Teorema}, an element of in the set $\{\emptyset, L_{0}, L_{1}, L_{0}\cup L_{1}\}$. Denote by $[f_{S}]$ the class of perturbation with $\alpha_{+} > 0$ if $L_{1} \subset S$ and $\alpha_{+} < 0$ if not. As well as $\alpha_{-} < 0$ if $L_{0} \subset S$ and $\alpha_{-} > 0$ if not. 

Let $\tilde{J} \in\mathcal{\widetilde{J}}_{B}$ be an autonomous almost complex structure that splits $\tilde{J} = J_{std} \oplus J$ (for some $J \in \mathcal{J}_{\omega}$) and denote by $g_{\tilde{J}}$ its induced Riemannian metric.
Let $\tilde{f}_{S}: W \rightarrow \mathbb{R}$ be a Morse function such that the negative gradient of $-\nabla_{g_{\tilde{J}}}\tilde{f}_{S}$ is transverse to $\partial W$ and it points outside of $W$ along $S$ and inside of $W$ on the complement of $S$ in $\partial W$.

\begin{proposition}\label{PropCob}
Let $(W, L_{0}, L_{1})$ be an orientable and spin exact Lagrangian cobordism equipped with a choice of spin structure. We have the following isomorphisms:$$HF(W, W;\mathbb{Z}(\pi_{1}W), [f_{S}]) \cong H(CM(W;\mathbb{Z}(\pi_{1}W),(\widetilde{f}_{S}, g_{\tilde{J}}))).$$
\end{proposition}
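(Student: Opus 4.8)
The plan is to reduce the cobordism case to the closed case, namely to Proposition 2.2 (which identifies the $\mathbb{Z}(\pi_1 L)$-Floer homology of an exact Lagrangian $L$ with $H_*(\widetilde L)$ via a Morse-theoretic model), by realizing the ends-perturbed Floer complex of $\overline W$ as a Morse complex of $W$ with the prescribed boundary behavior. First I would use the argument already carried out in the proof of Theorem \ref{Teorema}: for the chosen class $[f_S]$ there is a horizontal isotopy moving $\phi^{f_S\circ\pi}_1(\overline W)$ to a Lagrangian $\overline W'$ so that the Floer complex $CF(\overline W,\overline W';\mathbb{Z}(\pi_1 W),(H,\widetilde{\mathbf J}))$ is supported in a compact region and, after a small Hamiltonian perturbation, its generators and strips live in a Weinstein neighborhood $U_W$ of $W$ inside $\widetilde M$ symplectomorphic to a neighborhood of the zero section in $T^*W$. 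This is where the sign choices on $\alpha_\pm$ enter: they dictate whether the perturbing Hamiltonian pushes the end in the positive or negative conormal direction, which is exactly the dichotomy ``$-\nabla\widetilde f_S$ points outward along $S$ and inward along $\partial W\setminus S$.''

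Second, I would invoke the exactness of $W$ (so no Novikov variable is needed and energies are uniformly bounded, as discussed just before the proof of Proposition 2.2) together with Floer's comparison of Floer strips with Morse flow lines: choosing the autonomous split almost complex structure $\widetilde J = J_{std}\oplus J$ and the graph $\mathrm{graph}(\epsilon\, d\widetilde f_S)\subset T^*W$ as the perturbed copy, the intersection points are precisely $\mathrm{Crit}(\widetilde f_S)$ and, for generic data, the moduli spaces of Floer strips are identified with moduli spaces of $g_{\widetilde J}$-gradient trajectories. The $\pi_1 W$-weights $[\Theta(u)]$ attached in the differential \eqref{differential} then match the $\pi_1 W$-weights $n^{\widetilde f_S,\widetilde g}(\widetilde p,\alpha\widetilde q)$ in the $\mathbb{Z}(\pi_1 W)$-Morse complex, exactly as in the displayed chain of equalities in the proof of Proposition 2.2 (using the projection $\rho:\Omega(W)\to\pi_1 W$). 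One also needs that the geometric orientations on the Morse side agree with the determinant-line orientations used for the Floer differential; I would cite the same source (\cite[Theorem 13, Appendix B]{Sc}) as in the closed case, noting that the orientation/spin bookkeeping promised for section \ref{Chapter orientations} covers the cobordism setting.

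Third, I would combine these identifications: the ends-perturbed Floer complex for $[f_S]$ is, for this special choice of data, literally equal to $CM(W;\mathbb{Z}(\pi_1 W),(\widetilde f_S, g_{\widetilde J}))$, hence has the same homology; then Proposition 2.3 (invariance of $HF(W,W;\mathbb{Z}(\pi_1 W),[f_S])$ under the choice of generic data and horizontal isotopy) upgrades this to the stated isomorphism for arbitrary admissible data. The main obstacle I expect is not the Morse--Floer dictionary itself but the confinement step for the \emph{non-compact} Lagrangian with cylindrical ends: one must check carefully that, after the horizontal isotopy and the small perturbation, \emph{all} Floer strips contributing to the differential remain inside the Weinstein neighborhood $U_W$ of the compact part $W$ — i.e.\ that nothing escapes along the cylindrical ends — and that the Morse function $\widetilde f_S$ can be chosen cylindrical (a linear function in the $\mathbb R$-direction) on the ends so as to match the behavior of $f_S$ there. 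This is exactly where the open mapping / maximum principle bound of \cite[Lemma 4.2.1]{BiC} and the structure of $[f_S]$ must be used together, and it is the delicate point of the argument.
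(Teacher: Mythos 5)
Your proposal is correct and follows essentially the same route as the paper: extend $\widetilde f_S$ linearly over the cylindrical ends, take $\overline W'=\mathrm{graph}(\epsilon\,dF_S)$ in a Weinstein neighborhood of $\overline W$ (so that the sign conditions on $\alpha_\pm$ match the boundary behavior of $-\nabla\widetilde f_S$), invoke Floer's identification of Floer strips with gradient flow lines together with the $\pi_1W$-weight matching from Proposition 2.2, and obtain compactness from the fact that the data lie in $\widetilde{\mathcal J}_B$ with the Hamiltonian flow a translation outside a compact set. The only cosmetic difference is that you first route through the horizontal-isotopy/confinement argument of Theorem \ref{Teorema} before perturbing, whereas the paper constructs the graph perturbation directly on $\overline W$.
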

\begin{proof}
Extend the Morse function $\tilde{f}_{S}$ to a function $F_{S}:
 \overline{W}\rightarrow \mathbb{R}$ such that there is a compact set $B'\subset \mathbb{R}$ where $F_{S}\vert_{((\mathbb{R}^{2}\setminus
B')\times M) \cap \overline{W}}$ is a linear function.

Consider a Weinstein neighborhood $U_{\overline{W}}$ of $\overline{W}$, where we see $\overline{W}$ as the zero-section in $(T^{\*}\overline{W}, d\lambda_{std})$ with the standard exact symplectic form.

Let $\overline{W}' = \text{graph}(\epsilon dF_{S})$ be the image of $\overline{W}$ under the time one Hamiltonian diffeomorphism in the Hamiltonian isotopy $(\psi^{H}_{t})$, generated by the Hamiltonian $H =\epsilon F_{S}\circ\pi_{\overline{W}}$. Here, $\pi_{\overline{W}}: T^{*}\overline{W}\rightarrow \overline{W}$. 

The pair $(\overline{W}, \overline{W}')$is cylindrically distinct at infinity and intersects transversely, since the Hamiltonian flow $\psi^{H}$ is a translation outside some compact region.

From \cite{Fl2}, there exist a time dependent almost complex structure $\mathbf{\widetilde{J}}$, such that there is a bijection between the moduli space of Floer strips and the moduli space of flow lines connecting two critical points $x,y \in  \overline{W}\cap \overline{W}'$. 

Condition 2 on the Morse function $F_{S}$ ensures that $\mathbf{\widetilde{J}}\in \mathcal{\widetilde{J}}_{B}$. Since the Hamiltonian isotopy is linear outside the set $(\mathbb{R}^{2}\setminus
B)\times M$ then $(\psi^{H}_{t})_{*} = Id$ on this set. From the identity $\mathbf{\widetilde{J}}= (\psi^{H}_{t})_{*}\tilde{J}(\psi^{H}_{t})_{*}^{-1}$ follows the claim. 

Then, the Floer complex $CF(\overline{W}, \overline{W}'; \mathbb{Z}(\pi_{1}(W), \mathbf{\widetilde{J}})$ is well defined and 
\begin{align*}
H(CF(\overline{W}, \phi_{1}^{f_{S}\circ \pi}(\overline{W}); \mathbb{Z}(\pi_{1}(W),H, \mathbf{\widetilde{J}})) \cong & H(CF(\overline{W}, \overline{W}'; \mathbb{Z}(\pi_{1}(W), \mathbf{\widetilde{J}})) \\ \cong &
  H(CM(W;\mathbb{Z}(\pi_{1}W),(\widetilde{f}_{S}, g_{\tilde{J}}))).
 \end{align*}

\end{proof}
\begin{remark}
Notice that if $S =\emptyset$ or $L_{0}\cup L_{1}$, then the connected path $[f_{S}]$ corresponds to perturbations $f(x,y) = \alpha_{\pm}x + \beta_{\pm}$ with $\alpha_{-}$ and $\alpha_{+}$ of opposite signs.

Using the invariance of the $\mathbb{Z}(\pi_{1}W)$-Floer homology under horizontal Hamiltonian perturbations (Theorem \ref{Teorema} and Proposition 2.2), we have the following isomorphisms:
$$H_{*}(\tilde{L}_{0}) \cong HM_{*}(L_{0};\mathbb{Z}(\pi_{1}W)) \cong HF_{*}(W, W;\mathbb{Z}(\pi_{1}(W)),[f_{S}])\cong  HM_{*}(L_{1};\mathbb{Z}(\pi_{1}W))\cong H_{*}(\tilde{L}_{1}).$$

In the case $S = L_{0}, L_{1}$, the invariance of the $\mathbb{Z}(\pi_{1}W)$-Floer homology under horizontal Hamiltonian perturbations (Theorem \ref{Teorema} and the choice of Morse function on the cobordism imply:  
$$HF_{*}(W, W;\mathbb{Z}(\pi_{1}(W)),[f_{L_{i}}])\cong H_{*}(CM(W;\mathbb{Z}(\pi_{1}W),(\widetilde{f}_{L_{i}}, g_{\tilde{J}})))\cong H_{*}(\tilde{W}, \tilde{L}_{i}) = 0.$$
For the relation between singular and $\mathbb{Z}(\pi_{1}W)$-coefficients Morse homology see \cite{CoR}.
\end{remark}
\subsubsection{Proof of Corollaries \ref{Col1} and \ref{Col2}}
\begin{corollary*}\ref{Col1}
If $(W, L_{0}, L_{1})$ is an orientable and spin, exact Lagrangian cobordism and $i_{\#}:\pi_{1}(L_{i})\hookrightarrow \pi_{1}(W)$ is an isomorphism for $i= 0,1$, then $(W; L_{0}, L_{1})$ is an h-cobordism.
\end{corollary*}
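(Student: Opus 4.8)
The plan is to combine the Floer-theoretic vanishing obtained above with an elementary covering-space and Hurewicz argument. First I would record the relevant special case of what has already been proved: taking $S = L_0$ and $S = L_1$ in Proposition~\ref{PropCob}, and using the vanishing $HF(W,W;\mathbb{Z}(\pi_{1}W),[f_{L_{i}}]) \cong 0$ from Theorem~\ref{Teorema} together with the identification of $\mathbb{Z}(\pi_{1}W)$-coefficient Morse homology with the singular homology of covers (cf.\ \cite{CoR}), one gets
$$H_{*}\big(\widetilde{W}, \widetilde{L}_{i};\mathbb{Z}\big) \cong H_{*}\big(CM(W;\mathbb{Z}(\pi_{1}W),(\widetilde{f}_{L_{i}}, g_{\widetilde{J}}))\big) \cong HF_{*}(W,W;\mathbb{Z}(\pi_{1}W),[f_{L_{i}}]) = 0$$
for $i = 0,1$, where $p\colon \widetilde{W}\to W$ denotes the universal covering and $\widetilde{L}_{i} = p^{-1}(L_{i})$.

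Next I would bring in the hypothesis on $\pi_{1}$. Since $(j_{i})_{\#}\colon \pi_{1}(L_{i})\to \pi_{1}(W)$ is surjective, $\widetilde{L}_{i}$ is connected; since it is injective, the restricted covering $\widetilde{L}_{i}\to L_{i}$ corresponds to the trivial subgroup of $\pi_{1}(L_{i})$, so $\widetilde{L}_{i}$ is the universal cover of $L_{i}$. In particular both $\widetilde{W}$ and $\widetilde{L}_{i}$ are connected and simply connected. The long exact homology sequence of the pair $(\widetilde{W},\widetilde{L}_{i})$, together with the vanishing above, shows that the inclusion $\widetilde{L}_{i}\hookrightarrow \widetilde{W}$ induces an isomorphism on integral homology in every degree.

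Finally I would conclude by standard homotopy theory: since $\widetilde{W}$ and $\widetilde{L}_{i}$ are simply connected and carry CW structures (they are smooth manifolds, $\widetilde{L}_{i}$ a subcomplex via a collar) and the inclusion is a homology isomorphism, the relative Hurewicz theorem gives $\pi_{*}(\widetilde{W},\widetilde{L}_{i}) = 0$, so the inclusion is a weak homotopy equivalence and hence, by Whitehead's theorem, a homotopy equivalence. Combined with the fact that $(j_{i})_{\#}$ is already an isomorphism on $\pi_{1}$, this forces the inclusion $L_{i}\hookrightarrow W$ to be a homotopy equivalence for $i = 0,1$; by definition, $(W; L_{0}, L_{1})$ is then an h-cobordism.

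I do not expect a genuine obstacle at this stage: all the analytic and bifurcation-theoretic work is contained in Theorem~\ref{Teorema} and Proposition~\ref{PropCob}. The only points requiring care are bookkeeping ones — verifying that the chain of identifications landing at $H_{*}(\widetilde{W},\widetilde{L}_{i})$ is exactly the one set up in Proposition~\ref{PropCob} and \cite{CoR}, and noticing that the hypothesis on $\pi_{1}$ is invoked precisely (and only) to ensure that $\widetilde{L}_{i}$ is the universal cover of $L_{i}$, which is what legitimizes the Hurewicz step.
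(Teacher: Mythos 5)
Your proposal is correct and follows essentially the same route as the paper: both arguments first extract $H_{*}(\widetilde{W},\widetilde{L}_{i})=0$ from the vanishing of $HF(W,W;\mathbb{Z}(\pi_{1}W),[f_{L_{i}}])$ via Proposition~\ref{PropCob}, and then combine this with the $\pi_{1}$-hypothesis to conclude that the inclusions are homotopy equivalences. The only difference is presentational: where the paper simply cites Whitehead's theorem (Theorem~3 of \cite{Wh}), you unpack that citation into its standard proof (identification of $\widetilde{L}_{i}$ with the universal cover of $L_{i}$, the long exact sequence, relative Hurewicz, and Whitehead's theorem for simply connected CW complexes), which is a correct and slightly more self-contained rendering of the same argument.
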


\begin{corollary*}\ref{Col2}
If $(W; L_{0}, L_{1})$ is an orientable and spin, exact Lagrangian cobordism with $dim(W)> 5$ and $L_{i}$ ($i= 0,1$),  $W$ are simply connected, then $(W; L_{0}, L_{1})$ is a Lagrangian pseudo-isotopy.  
\end{corollary*}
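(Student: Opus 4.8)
The plan is to deduce both corollaries directly from Theorem~\ref{Teorema} together with the classical $h$- and $s$-cobordism theorems, using the exactness of $W$ to pin down the relevant homological invariants.

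\emph{Corollary~\ref{Col1}.} First I would unwind Theorem~\ref{Teorema} in the exact case. Since $W$ is exact Lagrangian, the Floer homology $HF(\overline W,\overline W;\mathbb Z(\pi_1 W),[f_{L_i}])$ is computed by a Morse complex on $W$ with a function whose gradient points outward along $L_i$ and inward along the complementary boundary component; hence it is isomorphic to $H_*(\widetilde W,\widetilde L_i)$, the homology of the universal cover pair. Theorem~\ref{Teorema} says this vanishes, so $H_*(\widetilde W,\widetilde L_i)=0$ for $i=0,1$. Now invoke the hypothesis that $(j_i)_\#:\pi_1(L_i)\to\pi_1(W)$ is an isomorphism: this identifies the universal covers compatibly, so $H_*(\widetilde W,\widetilde L_i)=0$ together with $\pi_1 L_i\cong\pi_1 W$ gives, by the relative Hurewicz theorem applied to $\widetilde L_i\hookrightarrow\widetilde W$, that $\pi_k(\widetilde W,\widetilde L_i)=0$ for all $k$, i.e.\ $L_i\hookrightarrow W$ is a weak homotopy equivalence, hence (CW pairs) a homotopy equivalence. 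This is exactly the definition of an $h$-cobordism, so $(W;L_0,L_1)$ is an $h$-cobordism. One small point to be careful about: one must check that the Morse function required for Proposition~\ref{PropCob} (outward along $S=L_i$, inward along the rest of $\partial W$, transverse to $\partial W$) exists; this is standard since $\partial W=L_0\sqcup L_1$ is a disjoint union and one can build $f$ by hand near the boundary and extend.

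\emph{Corollary~\ref{Col2}.} Here $\pi_1 W=\pi_1 L_i=1$, so $\mathrm{Wh}(\pi_1 W)=\mathrm{Wh}(1)=0$ and every $h$-cobordism has trivial Whitehead torsion. By Corollary~\ref{Col1}, $(W;L_0,L_1)$ is an $h$-cobordism, and since $\dim W>5$ the $s$-cobordism theorem (equivalently, in the simply connected case, Smale's $h$-cobordism theorem) applies: $W$ is diffeomorphic, rel boundary, to the trivial cobordism $([0,1]\times L_0;L_0,L_0)$. By definition this means $(W;L_0,L_1)$ is a Lagrangian pseudo-isotopy. (Note the diffeomorphism is purely smooth; no symplectic statement is claimed here, so there is nothing further to check.)

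\emph{Main obstacle.} The genuinely substantive input is Theorem~\ref{Teorema}, whose proof (carried out in the preceding subsections) requires setting up $\mathbb Z(\pi_1 W)$-twisted Floer homology for Lagrangians with cylindrical ends, proving its invariance under horizontal isotopies, and identifying it in the exact case with twisted Morse homology of $(W,L_i)$. Granting that, the deduction of the corollaries is short: the only real steps are (i) the relative Hurewicz argument converting vanishing twisted homology plus $\pi_1$-isomorphism into a homotopy equivalence, which needs the $\pi_1$-hypothesis in an essential way, and (ii) citing the $s$-cobordism theorem in dimension $>5$. I expect step (i)—being careful that ``$H_*(\widetilde W,\widetilde L_i)=0$'' really does imply $L_i\hookrightarrow W$ is a homotopy equivalence, and not merely a homology equivalence—to be the place where the argument could slip if the $\pi_1$-hypothesis were dropped, but under the stated hypotheses it goes through cleanly.
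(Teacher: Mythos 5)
Your proposal is correct and follows essentially the same route as the paper: Theorem \ref{Teorema} plus the Morse-complex identification gives $H_*(\widetilde W,\widetilde L_i)=0$, the $\pi_1$-hypothesis upgrades this to a homotopy equivalence (the paper cites Whitehead's theorem where you invoke relative Hurewicz, which is the same argument), and then Smale's $h$-cobordism theorem (equivalently the $s$-cobordism theorem with $\mathrm{Wh}(1)=0$) finishes Corollary \ref{Col2}. No substantive differences.
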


To prove these Corollaries we first recall the $h$-cobordism theorem. 

\begin{theorem}[h-Cobordism (Smale)]
If $(W; L_{0}, L_{1})$ is a simply connected h-cobordism with dim$(W) > 5$, then the cobordism $(W; L_{0}, L_{1})$ is diffeomorphic to the trivial cobordism $([0,1] \times L_{0}; L_{0}, L_{0})$.
\end{theorem}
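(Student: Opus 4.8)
The statement is the classical $h$-cobordism theorem of Smale, and the plan is to prove it by the handle-cancellation method of Morse theory; throughout write $n=\dim W$, so the hypothesis reads $n\geq 6$. I would first choose a Morse function $f\colon W\to[0,1]$ with $f^{-1}(0)=L_{0}$, $f^{-1}(1)=L_{1}$, no critical points on $\partial W$, together with a gradient-like vector field. By the Morse rearrangement lemma the critical values may be taken to increase with the Morse index, so that $W$ is obtained from $L_{0}\times[0,1]$ by attaching handles of non-decreasing index $0,1,\dots,n$. This gives a relative CW structure on $(W,L_{0})$ whose cellular chain complex $C_{*}$ is a based free $\mathbb{Z}[\pi_{1}W]$-module chain complex with one generator in degree $k$ per index-$k$ handle; since $L_{0}\hookrightarrow W$ is a homotopy equivalence $C_{*}$ is acyclic, and since $W$ is simply connected it is a based free acyclic complex over $\mathbb{Z}$.

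I would then normalize the decomposition to kill the extreme-index handles. Since $W$ is connected, the $0$-handles can be cancelled against $1$-handles until none of index $0$ remain; since $\pi_{1}W=1$, each remaining $1$-handle can be traded for a $3$-handle by introducing a cancelling $2$/$3$ pair and sliding. Applying the same to $-f$ and to the cobordism $(W;L_{1},L_{0})$, using $H_{*}(W,L_{1})=0$, removes all handles of index $n$ and $n-1$. After this $W$ is built from $L_{0}\times[0,1]$ using only handles of index $k$ with $2\leq k\leq n-2$, and one can moreover arrange (working one level at a time) that the intermediate level sets are simply connected, which is possible because they have dimension $n-1\geq 5$.

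The complex $C_{*}$ is now acyclic, based, free and concentrated in degrees $2,\dots,n-2$. Since the Whitehead group of the trivial group is zero, its torsion vanishes, and a purely algebraic argument shows that after finitely many elementary basis changes, stabilizations, and introductions of cancelling pairs, $C_{*}$ splits as a direct sum of elementary complexes $\cdots\to 0\to\mathbb{Z}\xrightarrow{\ \mathrm{id}\ }\mathbb{Z}\to 0\to\cdots$; geometrically this means the handles can be arranged so that those of some index $k$ and of index $k+1$ are equal in number with the identity incidence matrix. Each algebraic move has a geometric realization: basis changes are handle slides, while stabilizations and additions are introductions of cancelling handle pairs. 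It then remains to convert an identity incidence matrix into genuine geometric cancellations: given a $k$-handle $h$ and a $(k+1)$-handle $h'$ with algebraic intersection $\pm 1$, one looks in the intermediate level set $V=f^{-1}(c)$, of dimension $n-1$, at the belt sphere of $h$ and the attaching sphere of $h'$, which are of complementary dimension in $V$; since $\dim V\geq 5$ and $V$ is simply connected, the Whitney trick makes them meet transversally in a single point, so that $h$ and $h'$ form a geometrically cancelling pair that can be deleted.

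This Whitney-trick step is the heart of the argument and the main obstacle, and it is exactly where both $n>5$ (needed to embed the Whitney disks and to keep level sets simply connected after deleting the relevant spheres) and $\pi_{1}=1$ are used. Iterating the cancellation removes all handles in pairs, so $W$ has a handle decomposition relative to $L_{0}$ with no handles; the flow of the associated gradient-like vector field then yields a diffeomorphism $W\cong L_{0}\times[0,1]$ restricting to the identity on $L_{0}$, that is, $(W;L_{0},L_{1})$ is diffeomorphic to the trivial cobordism $([0,1]\times L_{0};L_{0},L_{0})$.
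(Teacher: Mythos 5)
The paper does not prove this statement; it is quoted as the classical theorem of Smale and used as a black box (via the references to \cite{Mi}, \cite{Ba}, \cite{Ma}, \cite{St}). Your proposal is a correct outline of exactly the standard proof that those sources give: self-indexing Morse function, cancellation of the extreme-index handles, vanishing of the torsion because $Wh(1)=0$, geometric realization of the algebraic reduction by handle slides and cancelling pairs, and the Whitney trick (which is where $\dim W>5$ and simple connectivity enter) to turn algebraic into geometric cancellation. So there is nothing to compare against in the paper itself; as a sketch of Milnor's argument your account is accurate, with the usual understanding that the phrase ``each algebraic move has a geometric realization'' is compressing the Basis Theorem and the handle-trading lemmas, whose full proofs occupy most of \cite{Mi}.
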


From the remark below, we have that if $(W; L_{0}, L_{1})$ is an exact Lagrangian cobordism, orientable and spin, equipped with a choice of spin structure then $H(\tilde{W}, \tilde{L}_{i}) = 0$ for $i= 0,1$.
Here, the homology $H(\widetilde{W}, \widetilde{L}_{i})$ is the relative homology induced from a CW-decomposition of the pair $(\widetilde{W},\widetilde{L}_{i})$.

The hypothesis on the fundamental groups $i_{\#}:\pi_{1}(L_{i})\hookrightarrow \pi_{1}(W)$ combined with $H(\tilde{W}, \tilde{L}_{i}) = 0$ for $i= 0,1$, imply that the inclusions are homotopy equivalences. This follows from one of the Whitehead Theorems \cite{Wh} Theorem 3. From this we deduce Corollary \ref{Col1}.

For the second corollary we use Corollary \ref{Col1} and the h-cobordism theorem.

\section{Simple homotopy type} \label{Chapter Simple h-theory}
This section presents the results on simple homotopy theory used in the paper. We follow the exposition of Cohen \cite{Co}.

The simple homotopy theory is the study of the relation of simple homotopy type between cellular complexes. 

\begin{definition}
Let $K$ and $L$ be finite CW-complexes. There is an elementary collapse from $K$ to $L$ (or an elementary expansion from $L$ to $K$) if $K = L \cup_{f\mid_{D^{n-1}}} D^{n-1}\cup_{f} D^{n}$, where:
\begin{enumerate}
\item $f: D^{n}\rightarrow K$ is the attaching map for $D^{n}$ and $f\mid_{D^{n-1}}: D^{n-1}\rightarrow K$ is the attaching map for $D^{n-1}$.
\item The new cells attached by $f$ are  not in $L$.
\item The closure of $\partial D^{n}- D^{n-1}$ denoted by $\overline{\partial D^{n}- D^{n-1}}$, satisfies $f(\overline{\partial D^{n}- D^{n-1}}) \subset L^{n-1}$.
\end{enumerate}
\end{definition}

Two CW-complexes $L$ and $K$ have the same \textit{simple-homotopy type} if they are related by a finite sequence of collapses and expansions, denoted $K\sim^{s} L$.

A map $f: L \rightarrow K$ is a \textit{simple-homotopy equivalence} if $f$ is homotopic to a map obtained by a finite number of compositions of the maps induced by collapses and expansions.

The relation of simple homotopy type is finer than that of homotopy type.
There are CW-complexes that are homotopy equivalent but not simple homotopy equivalent, examples can be found in \cite{Co}.

Given a fixed CW-complex $L$, consider the set of pairs $(K, L)$, where $K$ is a CW-complex homotopy equivalent to $L$, that contains $L$ as a sub-complex.
The Whitehead group of $L$, $Wh(L)$, is defined to be the set of equivalence classes of pairs $[K,L]$ under the relation $(K, L) \sim (K', L)$ if $K \sim^{s} K'$ rel $L$. This means that $K$ and $K'$ are related by a finite set of collapses and expansions for which no cell of $L$ is ever removed.
The group operation is $$[K, L] + [K', L] = [K\cup_{L} K', L].$$
Where $K\cup_{L} K'$ is the disjoint union of $K$ and $K'$ identified by the identity map on $L$.
The Whitehead group of $L$, $Wh(L)$, is an abelian group (see 6.1 of \cite{Co}).

Given a homotopy equivalence $f:K \rightarrow L$, we define its Whitehead torsion $\tau(f) := [M_{f}\cup_{K}M_{f}, L]$, where $M_{f}$ denotes the mapping cylinder of $f$.

The Whitehead torsion is the obstruction for a homotopy equivalence to be a simple-homotopy equivalence as stated in the following theorem.

\begin{theorem}[22.2 in \cite{Co}]
A homotopy equivalence $f:K \rightarrow L$, is a simple homotopy equivalence if and only if $\tau(f) = 0$.
\end{theorem}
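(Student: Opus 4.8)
The plan is to prove the two implications separately, viewing the equivalence as the bridge between the \emph{geometric} notion of a simple homotopy equivalence (a finite composite, up to homotopy, of the maps induced by elementary expansions and collapses) and the \emph{algebraic obstruction} $\tau(f)\in Wh(L)$. The formal tools I would use are: (i) homotopic homotopy equivalences have equal Whitehead torsion; (ii) the composition (``logarithmic'') formula $\tau(g\circ f)=\tau(g)+g_{\ast}\tau(f)$ for composable homotopy equivalences, where $g_{\ast}\colon Wh(L)\to Wh(L')$ is the induced isomorphism, together with $\tau(\mathrm{id})=0$; (iii) the elementary fact that a mapping cylinder collapses onto its base, $M_f\searrow L$, so that the retraction $r\colon M_f\to L$ and the inclusion $L\hookrightarrow M_f$ are mutually inverse simple homotopy equivalences by definition; and (iv) the presentation of $Wh(L)$ by square invertible matrices over $\mathbb Z(\pi_1L)$ modulo stabilization, elementary transformations, and the trivial units $\pm g$, $g\in\pi_1L$. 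Items (i)--(iii) are mapping-cylinder bookkeeping, and (iv) is essentially the definition of $Wh$.

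\emph{Forward implication.} Assume $f$ is a simple homotopy equivalence, so up to homotopy it is a finite composite of maps, each induced by a single elementary expansion $N\nearrow N'$ (the inclusion $N\hookrightarrow N'$) or a single elementary collapse $N'\searrow N$ (the retraction $r\colon N'\to N$). By (i) and (ii) it suffices to check that each such map has vanishing torsion. For an expansion, $N'\searrow N$ shows that the pair $[N',N]$ is zero in $Wh(N)$, which identifies the torsion of the inclusion $N\hookrightarrow N'$ with $0$; applying the composition formula to the composite of this inclusion with the retraction $r$, which is the identity of $N$, then forces $\tau(r)=0$ as well. Hence $\tau(f)$ is a sum of zeros, and $\tau(f)=0$.

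\emph{Converse.} Assume $\tau(f)=0$. Homotope $f$ to be cellular (which does not change $\tau(f)$) and replace it by the inclusion $\iota\colon K\hookrightarrow M_f$. By (iii) we have $f=r\circ\iota$ with $r$ a simple homotopy equivalence, hence $f$ is a simple homotopy equivalence iff $\iota$ is; and, using the forward implication to get $\tau(r)=0$, the composition formula gives $\tau(f)=r_{\ast}\tau(\iota)$, so $\tau(f)=0$ iff $\tau(\iota)=\tau(M_f,K)=0$. One is thus reduced to the following statement: if $K$ is a deformation retract subcomplex of a finite complex $M$ with $\tau(M,K)=0$, then there is a finite sequence of elementary expansions and collapses, none deleting a cell of $K$, carrying $M$ onto $K$. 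To prove this I would first \emph{trade cells} — using that the relative cellular chain complex of the universal covers is acyclic over $\mathbb Z(\pi_1K)$, and that expansions may introduce, and collapses remove, cancelling pairs of cells — in order to replace $(M,K)$, up to such moves rel $K$, by a pair whose cells all lie in two fixed adjacent dimensions $q$ and $q+1$, with equally many in each. Such a pair is presented by a single invertible matrix $A$ over $\mathbb Z(\pi_1K)$ whose class in $Wh(\pi_1K)$ equals $\tau(M,K)=0$; hence $A$ can be reduced to the identity by stabilizations, elementary transformations, and multiplications by trivial units $\pm g$. Each of these moves is realized geometrically — stabilization by inserting a trivially-cancelling pair of a $q$-cell and a $(q+1)$-cell, an elementary transformation by a handle slide of one $(q+1)$-cell over another, a trivial-unit move by re-choosing the orientation and the connecting path of a cell — and once $A$ has been reduced to the identity the remaining cells cancel in pairs, i.e.\ $M\searrow K$ rel $K$. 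Tracing back through the reductions, $f$ is a simple homotopy equivalence.

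The step I expect to be the main obstacle is exactly the geometric realization in the converse: proving that the $K$-theoretic manipulations of the presentation matrix can be carried out by honest cell manipulations of $M$ that never touch $K$ (handle slides, and birth or death of cancelling cell pairs), and, upstream of that, the cell-trading lemma that reduces an arbitrary highly connected CW pair to a two-dimensional one. This is precisely the material developed in the foundational chapters of \cite{Co}, and it is where all the real work lies; the remainder is formal manipulation of mapping cylinders and of the composition formula.
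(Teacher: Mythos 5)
The paper gives no proof of this statement; it is quoted verbatim as Theorem 22.2 of Cohen \cite{Co}. Your outline — vanishing of torsion for expansions and collapses plus the composition formula for the forward direction, and reduction via the mapping cylinder to the statement that a deformation-retract pair $(M,K)$ with $\tau(M,K)=0$ formally deforms to $K$ (cell trading to two adjacent dimensions, then geometric realization of stabilizations, elementary operations, and trivial units) for the converse — is precisely Cohen's argument, and you correctly identify where the real work lies.
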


The geometric definitions of Whitehead group is hard to work with, instead there is an algebraic version that we introduce in the following subsection.

\subsection{The Whitehead torsion} 

\subsubsection{The Whitehead torsion of a ring}

Let $\mathbb{Z}(G)$ be the integral group ring of the group $G$. The group $GL(n,\mathbb{Z}(G))$ denotes the group of non-singular $n\times n$ matrices over $\mathbb{Z}(G)$. Identifying each $M\in GL(n, \mathbb{Z}(G))$  with the matrix
\[\left[ {\begin{array}{cc}
   M & 0 \\
   0 & 1 \\
  \end{array} } \right],
\]
we obtain an injection $GL(n, \mathbb{Z}(G)) \subset GL(n+1, \mathbb{Z}(G))$. The infinite general linear group of $\mathbb{Z}(G)$ is the union over all $n$ of the groups $GL(n, \mathbb{Z}(G))$ and is denoted by $GL(\mathbb{Z}(G))$.
A matrix in $GL(\mathbb{Z}(G))$ is called elementary if it coincides with the identity matrix except for one off-diagonal element.

The set of elementary matrices is the commutator subgroup of $GL(\mathbb{Z}(G))$.

Let $E(G)$ denote the group generated by all the elementary matrices and all matrices that coincide with the identity matrix except for one diagonal element $g \in \pm G$. 

The Whitehead group of $G$ is the group:
$$Wh(G) = GL(\mathbb{Z}(G))/E(G).$$
The class of a matrix $[A] \in Wh(G)$ is called the Whitehead torsion of the matrix and is denoted by $\tau(A)$. 

\subsubsection{The Whitehead torsion of an acyclic $\mathbb{Z}(G)$-complex}

\begin{definition}
A $\mathbb{Z}(G)$-module is a free $\mathbb{Z}(G)$-module $M$ with a distinguished family of bases $B$ which satisfy:
If $b$ and $b'$ are two bases of $M$ and if $b \in B$, then $b' \in B$ if and only if $\tau([b/b']) = 0 \in Wh(G)$. Here $[b/b']$ represents the matrix that changes the base $b$ to the base $b'$.
\end{definition}
An isomorphism $f:M_{1} \rightarrow M_{2}$ is a \textit{simple isomorphism of $\mathbb{Z}(G)$-modules} if $\tau(b_{f}) = 0$, where $b_{f}$ is the matrix of $f$ with respect to any distinguished basis of $M_{1}$.

\begin{definition}
A $\mathbb{Z}(G)$-complex is a free chain complex over $\mathbb{Z}(G)$, $C=(C,d)$ such that each $C_{*}$ is a $\mathbb{Z}(G)$-module. A preferred basis of $C$ mean a basis $c = \cup_{i} c_{i}$ where $c_{i}$ is a preferred basis of $C_{i}$.
\end{definition}
A \textit{simple isomorphism} of $\mathbb{Z}(G)$-complexes, $f: C\rightarrow C'$, is a chain mapping such that $f\mid_{C_{*}}: C_{*}\rightarrow C_{*}'$ is a simple isomorphisms for all $*$.

Another useful notion is the following. Two $\mathbb{Z}(G)$-complexes, $(C, d)$ and $(C', d')$, are \textit{simple homotopy equivalent}, $(C, d)\sim^{s}(C', d')$, if there exist trivial\footnote{A $\mathbb{Z}(G)$-complex $(T,t): 0 \rightarrow T_{i+1} \rightarrow T_{i}\rightarrow 0$ is elementary trivial if $\tau(T) =0$. A trivial chain complex is a direct sum of elementary trivial chain complexes.} $\mathbb{Z}(G)$-chain complexes $(T, t)$ and $(T', t')$ such that the chain complex $(C\oplus T,d\oplus t)$ is simple isomorphic to the chain complex $(C'\oplus         T',d'\oplus t')$. 

Let $(C, d)$ be an acyclic $\mathbb{Z}(G)$-complex. Then $(C, d)$ is contractible, so there exist a chain contraction $\delta:C\rightarrow C[1]$ such that $\delta d + d\delta = 1$. If $C_{odd} = C_{1}\oplus C_{3}\oplus \cdots$ and $C_{even} = C_{0}\oplus C_{2}\oplus \cdots$, then the torsion of the complex $(C, d)$ is defined to be the torsion of the map:
$$(d+ \delta)_{odd}=(d+ \delta)\mid_{C_{odd}}: C_{odd} \rightarrow C_{even},$$
$$\tau(C) = \tau((d+ \delta)_{odd}) \in Wh(G).$$

The Whitehead torsion of a complex satisfies the following properties:
\begin{enumerate}
\item If $f:C\rightarrow C'$ is a simple isomorphism of chain $\mathbb{Z}(G)$-complexes, then $\tau(C) = \tau(C')$.
\item The torsion of a direct sum of $\mathbb{Z}(G)$-complexes $C\oplus C'$ satisfies $\tau(C\oplus C') = \tau(C)+ \tau(C')$.
\end{enumerate}
If $f:C\rightarrow C'$ is a homotopy equivalence of $\mathbb{Z}(G)$-complexes, then the Whitehead torsion of $f$ is defined by $\tau(f)= \tau(Cone(f))$.

\subsubsection{The Whitehead torsion of CW-pair}

Let $(W, L)$ be a pair of finite, connected CW-complex where $L$ is a sub-complex of $W$.
Let $C(W,L)$ denote the cellular complex, where $C(W,L)_{*}= H_{*}(W^{*}\cup L, W^{*-1}\cup L) $ is the free module generated by the $*$-cells of $W-L$ and the differential is defined by the boundary operator on the exact sequence for singular homology of the triple $(W^{*}\cup L, W^{*-1}\cup L, W^{*-2}\cup L)$.

If $L\hookrightarrow W$ is a homotopy equivalence, then the induced map $\pi_{1}(L)\rightarrow \pi_{1}(W)$ is an isomorphism. In this case, if $p:\widetilde{W} \rightarrow W$ is the universal covering of $W$, then $p^{-1}(L) = \widetilde{L}$ is the universal covering of $L$ and $\widetilde{L} \hookrightarrow \widetilde{W}$ is also a homotopy equivalence.

The projection map $p$ induces a structure of CW-complex on $\widetilde{W}, \widetilde{L}$ from the ones on $W, L$. With this cellular structure, the cellular complex $C(\widetilde{W}, \widetilde{L})$ is a $\mathbb{Z}(\pi_{1}L)$-complex.
\begin{definition}
Let $(W, L)$ be a pair of finite, connected CW-complexes, and suppose that $L\hookrightarrow W$ is a homotopy equivalence. The Whitehead torsion of the pair is defined by $$\tau(W, L)= \tau(C(\widetilde{W}, \widetilde{L})) \in Wh(\pi_{1}L),$$
where $(\widetilde{W}, \widetilde{L})$ is the universal covering of $(W,L)$. 
\end{definition}
 
\subsubsection{ The s-cobordism theorem} 
One application of the theory, briefly recalled in this section, is the s-cobordism theorem.

\begin{theorem}[s-Cobordism, Mazur \cite{Ma}, Barden \cite{Ba}, Stallings \cite{St}]
Let $(W; L_{0}, L_{1})$ be an h-cobordism of dimension
$n > 5$. Then the torsion $\tau(W, L_{0})$ vanishes if and only if $W$ is diffeomorphic to the product $L_{0}\times [0, 1]$.
\end{theorem}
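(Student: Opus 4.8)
The plan is to prove the two implications of the s-cobordism theorem separately; the forward direction is essentially formal, and the converse is the substantive handle-theoretic argument.

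\emph{The direction ``product $\Rightarrow$ trivial torsion''.} If $W$ is diffeomorphic to $L_{0}\times[0,1]$, then under this diffeomorphism the inclusion $L_{0}\hookrightarrow W$ becomes $L_{0}\times\{0\}\hookrightarrow L_{0}\times[0,1]$. On the level of cellular chain complexes of universal covers this gives $C(\widetilde{W},\widetilde{L}_{0})=0$ (the product deformation retracts onto the end by an expansion whose torsion vanishes), so $\tau(W,L_{0})=\tau(C(\widetilde{W},\widetilde{L}_{0}))=0$.

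\emph{The direction ``trivial torsion $\Rightarrow$ product''.} Assume $\tau(W,L_{0})=0$. Choose a Morse function $f\colon W\to[0,1]$ with $f^{-1}(0)=L_{0}$, $f^{-1}(1)=L_{1}$ and no critical points near $\partial W$; this yields a handle decomposition of $W$ built on $L_{0}\times[0,1]$. The first step is to put this decomposition in normal form. Using that the inclusions induce isomorphisms on $\pi_{1}$ and that $n=\dim W\ge 6$, one eliminates handles of index $0$ and $1$ (handle trading against $2$-handles, sliding attaching spheres off the low-index cores by general position) and dually handles of index $n-1$ and $n$. Because $L_{0}\hookrightarrow W$ is a homotopy equivalence, the relative homology $H_{*}(\widetilde{W},\widetilde{L}_{0})$ vanishes, and a further rearrangement (handle cancellation together with handle trading, again in the stable range) collapses all remaining handles to a single pair of adjacent middle indices $k,k+1$ with $2\le k\le n-3$.

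At this stage the cellular chain complex of the universal cover is concentrated in two degrees, $0\to C_{k+1}\xrightarrow{\partial}C_{k}\to 0$, where $C_{j}$ is free over $\Lambda:=\mathbb{Z}(\pi_{1}L_{0})$ on the $j$-handles and $\partial$ is the matrix of twisted signed intersection numbers of attaching spheres with belt spheres. Acyclicity forces $\partial$ to be an isomorphism, and by definition $\tau(W,L_{0})=\tau(\partial)=[\partial]\in Wh(\pi_{1}L_{0})$, which is $0$ by hypothesis. Hence $\partial$ can be brought to the identity by a finite sequence of elementary row and column operations, stabilizations (adjoining a $1\times 1$ block), and multiplications by diagonal matrices with entries in $\pm\pi_{1}L_{0}$. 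Each such algebraic move is realized geometrically: stabilization corresponds to introducing a trivially cancelling $(k,k+1)$-handle pair, an elementary operation to a handle slide, and a $\pm g$-diagonal change to re-orienting a handle or changing the chosen connecting path.

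After these modifications the incidence matrix is the identity, so each $(k+1)$-handle meets exactly one $k$-handle's belt sphere with algebraic intersection number $1$ and all others $0$. Since $3\le k+1$, $3\le n-k$, and $\pi_{1}$ of the relevant level set maps isomorphically to $\pi_{1}W$, the Whitney trick applies ($n\ge 6$ is exactly what makes the Whitney discs embed after general position): we may isotope the attaching spheres so that the \emph{geometric} intersection numbers equal the algebraic ones, i.e.\ each $(k+1)$-handle meets a unique $k$-handle transversally in one point. These pairs then cancel geometrically, $f$ can be modified to have no critical points at all, and therefore $W\cong L_{0}\times[0,1]$. The hardest step is this last one: realizing the algebraic cancellation geometrically via the Whitney trick, which is where the hypothesis $\dim W>5$ is genuinely used and where the fundamental group enters (one needs the Whitney discs embedded, guaranteed by general position only in dimension $\ge 6$). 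Passing to the two-index normal form in the first place — especially trading away the $0$-, $1$-, $(n-1)$- and $n$-handles — is also delicate, but it is standard Morse-theoretic bookkeeping once the dimension restriction is available.
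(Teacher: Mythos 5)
The paper does not prove this theorem: it is quoted as a classical result with references to Mazur, Barden and Stallings, and is used as a black box in the proof of the Main Theorem. Your outline is the standard handle-theoretic proof found in those sources (and in Kervaire's and L\"uck's expositions), and it is correct in its essentials: normal form via elimination of index $0,1,n-1,n$ handles and reduction to two adjacent middle indices, identification of $\tau(W,L_0)$ with the class of the incidence matrix, geometric realization of the elementary operations (handle slides, stabilization by trivial pairs, basepoint/orientation changes) that reduce that matrix to the identity, and finally the Whitney trick --- which is where $\dim W>5$ and the $\pi_1$ control genuinely enter --- to turn algebraic into geometric cancellation. The only imprecision worth flagging is in the low-index step: $0$-handles are cancelled directly against $1$-handles, while $1$-handles are \emph{traded for} $3$-handles by introducing a cancelling $2/3$-pair and sliding, using that $\pi_1(L_0)\to\pi_1(W)$ is onto to contract the relevant circle; your phrase ``handle trading against $2$-handles'' compresses this, but the idea is the right one. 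As a sketch of a theorem this size, nothing essential is missing.
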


Notice that if $(W; L_{0}, L_{1})$ is an h-cobordism, then we can also define the Whitehead torsion of a pair $(W,L_{0})$ using a Morse function. Recall that any nice\footnote{A self-indexing function without critical points on the boundary.} Morse function defined on an h-cobordism $(W; L_{0}, L_{1})$ induces a CW-structure on it, that lifts to the universal covering $\widetilde{W}$. The following theorem of Milnor establishes that the torsion of the complex defined by the CW-pair induced by the lift of any nice Morse function is the torsion of the pair $(W, L_{0})$.

\begin{theorem}[Theorem 9.3 \cite{Mi}] 
If $(W;L,L')$ is an h-cobordism, then $$\tau(W, L) = \tau(C(\widetilde{W}, \widetilde{L};f)$$ does not depend on the choice of nice Morse function. 
\end{theorem}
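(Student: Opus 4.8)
The plan is to extract, from a nice Morse function $f$ on $(W;L,L')$ together with a gradient-like vector field $\xi$, a handle decomposition of $W$ relative to $L$, and hence a structure of finite relative CW-complex on the pair $(W,L)$ with one cell for each critical point of $f$ (there are finitely many, since $W$ is compact and $f$ has no critical points on $\partial W$). Lifting to the universal cover $p\colon\widetilde W\to W$ --- which restricts to the universal cover of $L$, since $L\hookrightarrow W$ is a homotopy equivalence --- the cellular chain complex of this CW-pair is a finitely generated free $\mathbb{Z}(\pi_1 L)$-complex $C(\widetilde W,\widetilde L;f)$, whose differential counts $\xi$-trajectories between critical points weighted by the elements of $\pi_1 L$ they represent. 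A preferred basis is specified by choosing orientations of the descending disks and lifts of the core disks to $\widetilde W$; altering these choices multiplies basis vectors by elements of $\pm\pi_1 L$, so the torsion $\tau(C(\widetilde W,\widetilde L;f))$ is a well-defined element of $Wh(\pi_1 L)$ once $f$ and $\xi$ are fixed. Since $L\hookrightarrow W$ is a homotopy equivalence, $H_*(\widetilde W,\widetilde L)=0$, so this complex is acyclic and its Whitehead torsion is indeed defined.

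The heart of the proof is independence of $\tau(C(\widetilde W,\widetilde L;f))$ from the choice of $(f,\xi)$. I would invoke one-parameter (Cerf) theory: any two nice Morse functions with gradient-like fields can be joined by a generic path $\{(f_t,\xi_t)\}$ that is a nice Morse function for all but finitely many $t$, the exceptional parameters falling into three types. First, isotopies of $\xi_t$ and reorderings of critical values within a fixed index range, under which the based complex changes by a simple isomorphism, so $\tau$ is unchanged by property~(1) of Whitehead torsion. Second, handle slides (sliding one $k$-handle across another $k$-handle), under which a preferred basis of $C_k$ is replaced by its image under an elementary matrix over $\mathbb{Z}(\pi_1 L)$, again leaving $\tau$ fixed. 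Third, births and deaths of a cancelling pair of handles of indices $k,k+1$, under which $C(\widetilde W,\widetilde L;f)$ is replaced by its direct sum with the elementary trivial complex $0\to\mathbb{Z}(\pi_1 L)\xrightarrow{\,\pm g\,}\mathbb{Z}(\pi_1 L)\to 0$, where the unit $\pm g\in\pm\pi_1 L$ is the group element carried by the unique short trajectory joining the cancelling pair; by property~(2) and the vanishing of $\tau$ on an elementary trivial complex, this again does not change $\tau$. Composing finitely many such moves along the path yields $\tau(C(\widetilde W,\widetilde L;f_0))=\tau(C(\widetilde W,\widetilde L;f_1))$.

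Finally, the common value is what is denoted $\tau(W,L)$: by definition $\tau(W,L)=\tau(C(\widetilde W,\widetilde L))$ is computed from a cellular chain complex of the pair, and a handle decomposition of the h-cobordism supplies exactly such a CW-structure; the previous paragraph shows this prescription is legitimate, i.e. the result is independent of which nice Morse function was used to build it.

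I expect the main obstacle to be the bookkeeping underlying the third type of move, and more generally the verification that the bifurcations act on the based complex only through simple isomorphisms, elementary matrices, and elementary trivial summands: one must check that the connecting differential of a freshly created cancelling $(k,k+1)$-pair is genuinely a unit of $\mathbb{Z}(\pi_1 L)$ (so that the added complex is elementary trivial, not merely acyclic), and that a handle slide acts through an elementary matrix rather than an arbitrary invertible one, both of which require a local analysis of the trajectory spaces near the bifurcation. Establishing the Cerf-theoretic statement that any two nice Morse functions are joined by a path with only these degenerations is the genuinely hard input; alternatively one may replace it by the Morse-theoretic rearrangement and handle-trading lemmas as in Milnor, reaching the same combinatorial conclusion.
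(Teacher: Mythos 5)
Your outline is essentially correct, but it proves the theorem by a genuinely different route than the source the paper cites. The paper gives no argument of its own here: it quotes Milnor's Theorem 9.3, and Milnor's proof does not connect two Morse functions by a path. Instead he fixes the a priori invariant $\tau(W,L)$ of the CW pair (via a smooth triangulation) and shows, for \emph{each} nice Morse function separately, that the filtration $L=W_{-1}\subset W_0\subset\cdots\subset W_n=W$ by sublevel sets satisfies the hypotheses of his filtration lemma, so that the based complex $C(\widetilde W,\widetilde L;f)$ is simple-isomorphic to the cellular complex of the pair; independence of $f$ is then automatic because every Morse function is compared to the same reference object. Your one-parameter (Cerf) argument is a legitimate alternative --- indeed it is exactly the template this paper follows in Section \ref{chapitre whitehead} for the Floer complex --- but it buys generality at a price: it requires the genuinely hard genericity statement for paths of Morse pairs (which postdates Milnor), and it still needs your final paragraph's identification with the CW-defined $\tau(W,L)$, which is where Milnor's filtration lemma does all the work. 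Two small points of care in your version: after a birth the new complex is not literally a direct sum with an elementary trivial complex (old critical points may interact with the new pair), only simple-isomorphic to one after a Gaussian-elimination change of basis using that $\langle dx^-,x^+\rangle$ is a unit $\pm g$ --- you flag this, and the paper's Floer analogue handles it via Sullivan's stabilization trick; and a handle decomposition yields a CW structure on $(W,L)$ only after a homotopy making the attaching maps cellular, which should be said when invoking the definition of $\tau(W,L)$. Neither is a fatal gap; the approach goes through.
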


\begin{remark}\label{remarckWh}
Recall that under some conditions established in Corollary \ref{Col1}, if $(W; L_{0}, L_{1})$ is an exact Lagrangian cobordism, then it is an h-cobordism and the torsion of the pair $(W, L_{0})$ is well defined.
From Milnors's theorem follows that $ \tau(W, L_{0}) = \tau(CM(W; \mathbb{Z}(\pi_{1}W)(f_{L_{0}},g_{J}))$, for a pair $(f_{L_{0}},g_{J})$ satisfying the conditions required in the proof of Proposition 2.4.
\end{remark}

\section{Whitehead torsion and Floer complex}
\label{chapitre whitehead}

By the s-cobordism Theorem, an h-cobordism of dimension higher than 5 is a pseudo-isotopy if its Whitehead torsion vanishes. In the symplectic category, the cobordism studied has an additional structure: it is a Lagrangian submanifold with Lagrangian boundary. 

To compute the Whitehead torsion of an exact Lagrangian h-cobordism $(W; L_{0}, L_{1})$, we study the torsion of the $\mathbb{Z}(\pi_{1}W)$-Floer complex associated to the pair of exact Lagrangians with cylindrical ends $(W, W')$, where $W' = \phi^{H}_{1}(\phi_{1}^{f\circ \pi}(W))$ satisfies $W \pitchfork W'$.

In this section we show that the Whitehead torsion of the $\mathbb{Z}(\pi_{1}W)$-Floer complex is invariant under horizontal perturbations of the Lagrangian with cylindrical ends associated to the cobordism. 
 
Fukaya defined a Whitehead torsion for the Floer complex of Lagrangian intersections and he suggested the invariance of it under Hamiltonian perturbations in \cite{Fu}. He proposed the following conjecture.
\begin{conjectures}[Fukaya, 1995] If $HF(L_{1}, L_{2}) = 0$ and $\tau(CF(L_{1}, L_{2})) = 0$, then there exists an exact symplectic diffeomorphism $\phi$ such that $L_{1}\cap \phi(L_{2}) = \emptyset$.
\end{conjectures}

M. Sullivan \cite{Su} proved the invariance of the Whitehead torsion of the $\mathbb{Z}_{2}(\pi_{1}L)$-coefficients Floer complex under compactly supported Hamiltonian isotopies.
He considered the $\mathbb{Z}_{2}(\pi_{1}L)$-coefficients Floer complex associated to a pair of Hamiltonian isotopic, non-compact Lagrangians $(L, L')$, such that $\omega\vert_{\pi_{2}(M,L)}=0$. Part of his proof is based on a Gluing theorem proved by Lee.

Lee studied the Floer-Novikov complex for periodic orbits of a Hamiltonian. In \cite{Le1} and \cite{Le2}, she defined an invariant, denoted by $I_{F}$, related to the Reidemeister torsion of the Floer-Novikov complex. Based on a series of gluing theorems, she proved that $I_{F}$ is invariant under Hamiltonian perturbation. 
Her work is a generalization to the symplectic setting of the same invariant $I_{F}$, studied by Hutchings and her for the Novikov complex in \cite{HuL}.

From the theory exposed in previous section, any finite acyclic $\mathbb{Z}(\pi_{1}W)$-complex with a preferred family of bases has a well defined Whitehead torsion. The $\mathbb{Z}(\pi_{1}W)$-Floer complex associated to the pair $(W, W')$ has a preferred base given by the set $W\cap W'$, and is an acyclic complex for the class of perturbations that displace $W'$ away from $W$; therefore it has a well-defined Whitehead torsion.

From remark (\ref{remarckWh}), the Whitehead torsion of an exact Lagrangian h-cobordism, can be defined using the $\mathbb{Z}(\pi_{1}W)$-Morse complex. The $\mathbb{Z}(\pi_{1}W)$-Morse complex agrees with the $\mathbb{Z}(\pi_{1}W)$-Floer complex for a special choice of Hamiltonian. 

The invariance of the Whitehead torsion of the $\mathbb{Z}(\pi_{1}W)$-Floer complex under horizontal perturbations, implies that the Whitehead torsion of an exact Lagrangian h-cobordism is given by the Whitehead torsion of its $\mathbb{Z}(\pi_{1}W)$-Floer complex. As an application of this, we establish that an exact Lagrangian cobordism (satisfying appropriate hypotheses), is a Lagrangian pseudo-isotopy. 

\subsection{Main results of the section}

The main results of the section are the following:
\begin{theorem}
\label{theorem invariance}
Let $(W, W')$ be a pair of exact Lagrangians with cylindrical ends where $W' = \phi^{H}_{1}(\phi_{1}^{f_{S}\circ \pi}(W))$ for  $S = L_{0}, L_{1}$ and $W \pitchfork W'$. 

Consider the $\mathbb{Z}(\pi_{1}W)$-Floer complex $CF(W, W';\mathbb{Z}(\pi_{1}W),\mathbf{J}_{0})$, where $\mathbf{J}_{0} \in \mathcal{\widetilde{J}}_{B}$. If $\{\phi_{t}\}_{t\in [0, 1]}$ is a horizontal isotopy, then the Whitehead torsion of the Floer complex, satisfies: $$\tau(CF(W, W';\mathbb{Z}(\pi_{1}W),\mathbf{J}_{0})) = \tau(CF(W, \phi_{1}(W');\mathbb{Z}(\pi_{1}W),\mathbf{J}_{1})).$$
\end{theorem}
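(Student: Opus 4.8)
The plan is to follow the bifurcation analysis of Sullivan \cite{Su}, upgraded to $\mathbb{Z}(\pi_{1}W)$-coefficients and to the cylindrical-ends setting, combined with the degenerate gluing theorem of Lee \cite{Le2}. First note that, since $S=L_{0},L_{1}$, Theorem \ref{Teorema} gives that both $CF(W, W';\mathbb{Z}(\pi_{1}W),\mathbf{J}_{0})$ and $CF(W, \phi_{1}(W');\mathbb{Z}(\pi_{1}W),\mathbf{J}_{1})$ are acyclic, so both have a well-defined Whitehead torsion in $Wh(\pi_{1}W)$. I would then realize the horizontal isotopy $\{\phi_{t}\}$ together with a generic path $\{\mathbf{J}_{\tau}\}_{\tau\in[0,1]}$ in $\mathcal{\widetilde{J}}_{B}$ from $\mathbf{J}_{0}$ to $\mathbf{J}_{1}$ as a single one-parameter family of Floer data (a moving-boundary problem with parameter $\tau$) interpolating between the two complexes in the statement. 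Compactness of the parametrized moduli spaces follows as in the proof of Theorem \ref{Teorema} from the $\mathcal{\widetilde{J}}_{B}$-condition and the exactness of $W$, which together with $N_{W}>2$ also excludes disc and sphere bubbling. For a generic such family there are finitely many parameters $\tau_{1}<\dots<\tau_{k}$ at which regularity fails, and at each one exactly one of two codimension-one degenerations occurs: a \emph{handle slide}, where $W\cap\phi_{\tau}(W')$ is locally unchanged but there is an isolated rigid (in the one-parameter family) Floer strip between two generators of equal Maslov index; or a \emph{birth--death}, where $W$ and $\phi_{\tau}(W')$ acquire a tangency and a pair of intersection points $x_{\pm}$ with $\mu(x_{+})=\mu(x_{-})+1$ is created or destroyed.

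On each open interval $(\tau_{i},\tau_{i+1})$ the intersection points vary through a canonical bijection and the integer group-ring matrix entries of $d$ cannot jump, so the complex is constant up to canonical simple isomorphism and $\tau(CF(W,\phi_{\tau}(W');\mathbb{Z}(\pi_{1}W),\mathbf{J}_{\tau}))$ is locally constant; it remains to check it does not jump across a $\tau_{i}$. Across a handle slide the two adjacent complexes are related by a chain isomorphism whose matrix in the preferred bases is $\mathrm{Id}+E$, where $E$ has a single off-diagonal entry $\pm g$ with $g\in\pi_{1}W$ the class $[\Theta(\cdot)]$ of the rigid strip (no Novikov variable appears since $W$ is exact); such a matrix lies in $E(\pi_{1}W)$, so this is a simple isomorphism and $\tau$ is unchanged by property (1) of the Whitehead torsion.

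Across a birth--death the crux is to compute the matrix coefficient $\langle d x_{+},x_{-}\rangle\in\mathbb{Z}(\pi_{1}W)$ just after the birth (or just before the death). The claim is that it equals $\pm g$ for some $g\in\pi_{1}W$, produced by a unique rigid Floer strip obtained by gluing the two short strips near $x_{\pm}$ to the degenerate ``cusp'' strip that sits at the bifurcation moment; this is exactly where Sullivan's stabilization technique and Lee's gluing theorem for a degenerate strip \cite{Le2} enter, to be adapted from the Hamiltonian-orbit setting to the present Lagrangian-intersection, cylindrical-ends, exact, $\mathbb{Z}(\pi_{1}W)$-twisted situation. The gluing also pins down the relative homotopy class of the glued strip, hence $g$, and — modulo the orientation bookkeeping deferred to section \ref{Chapter orientations} — its sign. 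Granting this, a simple change of basis identifies the complex just after the birth with $C^{-}\oplus T$, where $C^{-}$ is the complex just before and
\[ T\colon\quad 0\longrightarrow \mathbb{Z}(\pi_{1}W)\langle x_{+}\rangle \xrightarrow{\ \pm g\ } \mathbb{Z}(\pi_{1}W)\langle x_{-}\rangle\longrightarrow 0 \]
is elementary trivial; then $\tau(C^{-}\oplus T)=\tau(C^{-})+\tau(T)=\tau(C^{-})$ by properties (1) and (2), so $\tau$ does not jump. Chaining these equalities across $\tau_{1},\dots,\tau_{k}$ yields the theorem.

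The hard part will be the birth--death step: establishing the degenerate gluing theorem in this setting with enough control to guarantee transversality of the glued strip, to identify its homotopy class so the new matrix entry is a genuine unit of $\mathbb{Z}(\pi_{1}W)$ rather than a non-invertible element, and to track the induced orientations consistently with the $\mathbb{Z}(\pi_{1}W)$-Floer orientations; ruling out a simultaneous bubble degeneration again uses $N_{W}>2$ and exactness. All sign and orientation issues are postponed to section \ref{Chapter orientations}.
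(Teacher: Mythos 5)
Your overall strategy is the paper's: realize the horizontal isotopy and the path of almost complex structures as a generic one-parameter family, classify the bifurcations into handle slides and birth--deaths, and check that each one changes the complex only by an elementary operation (Lemma 4.5). Your handle-slide step is essentially identical to the paper's: the degenerate gluing theorem (Theorem \ref{Gluing}, adapted from Lee) yields the boundary formulas for the parametrized moduli spaces, hence the chain isomorphism $x\mapsto x+\mathrm{sign}(u)[\Theta(u)]\delta^{x}_{x_{-}}x_{+}$ of Corollary \ref{ColH-S}, whose matrix is elementary.

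The gap is in the birth--death step. Knowing that the entry $\langle dx_{-},x_{+}\rangle$ between the two newborn generators is a unit $\pm g$ does not by itself give ``a simple change of basis identifying the complex after the birth with $C^{-}\oplus T$'': a birth can also create rigid strips joining $x_{\pm}$ to the \emph{old} generators, and cancelling the pair $(x_{-},x_{+})$ by Gaussian elimination then introduces correction terms $a(\pm g)^{-1}b$ into the old-to-old entries. The elimination itself is simple, so $\tau(C^{+})=\tau(C'\oplus T)$, but you must still identify $C'$ with $C^{-}$, and that requires a separate gluing/compactness analysis of how the old moduli spaces $\mathcal{M}(y,z)$ break through $x_{\pm}$ across the bifurcation --- an analysis your proposal never supplies. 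This is precisely what Sullivan's stabilization is designed to bypass: in the paper one stabilizes the ambient manifold by $\mathbb{R}^{2}$ twice and perturbs the family as in Theorem \ref{sullivan}, so that a projection/open-mapping-theorem argument shows \emph{all} moduli spaces connecting $x_{\pm}$ to old generators are empty (the birth is ``independent''); then $C^{+}$ is literally $C^{-}\oplus T$, with $d_{T}$ given by the unique short strip, which is found by a local Darboux-chart argument rather than by gluing at the cusp. Relatedly, you have the two tools reversed --- Lee's degenerate gluing theorem is what the handle-slide step needs, not the birth--death step --- and you should also record, as the paper does in its final paragraph, that comparing the $\lambda=0$ and $\lambda=1$ complexes the preferred bases differ by the loops traced by the intersection points under the isotopy, a diagonal change of basis by elements of $\pm\pi_{1}W$ which is again elementary.
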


And the main result of the paper:
\begin{theorem}
\label{s-theorem}
Let $(W; L_{0}, L_{1})$ be an exact, orientable and spin Lagrangian cobordism equipped with a choice of spin structure. Assume $dim(W)>5$.

If the map $(j_{i})_{\#}:\pi_{1}(L_{i})\rightarrow \pi_{1}(W)$ induced by the inclusion $L_{i} \hookrightarrow W$ is an isomorphism for $i =0,1$, then $(W; L_{0}, L_{1})$ is a Lagrangian pseudo-isotopy.
\end{theorem}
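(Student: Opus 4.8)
The plan is to reduce the statement to the $s$-cobordism theorem. The symplectic hypotheses are used only to force two things: that $(W;L_{0},L_{1})$ is an $h$-cobordism, and that its Whitehead torsion $\tau(W,L_{0})\in Wh(\pi_{1}W)$ vanishes. Once both hold, the $s$-cobordism theorem (applicable since $\dim W>5$) produces a diffeomorphism $W\cong L_{0}\times[0,1]$ carrying $L_{0}$, $L_{1}$ to the two boundary components, which is precisely the assertion that $(W;L_{0},L_{1})$ is a Lagrangian pseudo-isotopy. The first point is already in hand: since the cobordism is exact, orientable and spin with a chosen spin structure and $(j_{i})_{\#}$ is an isomorphism for $i=0,1$, Corollary \ref{Col1} gives that $(W;L_{0},L_{1})$ is an $h$-cobordism, so $\tau(W,L_{0})$ is defined. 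Everything therefore reduces to showing $\tau(W,L_{0})=0$.

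To compute this torsion I would route it through Floer theory. By Remark \ref{remarckWh} (Milnor's theorem on the torsion of an $h$-cobordism computed from a nice Morse function), $\tau(W,L_{0})=\tau\big(CM(W;\mathbb{Z}(\pi_{1}W),(f_{L_{0}},g_{\widetilde{J}}))\big)$, where $f_{L_{0}}$ is a nice Morse function on $W$ whose negative gradient points out of $W$ along $L_{0}$ and into $W$ along $L_{1}$, and $g_{\widetilde{J}}$ is the metric of a split almost complex structure $\widetilde{J}=J_{std}\oplus J$; this $\mathbb{Z}(\pi_{1}W)$-complex is acyclic since $H_{*}(\widetilde{W},\widetilde{L}_{0})=0$, so its torsion makes sense. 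Next, by the proof of Proposition \ref{PropCob}, taking $W'=\phi_{1}^{f_{L_{0}}\circ\pi}(W)$, namely the graph of $\epsilon\,df_{L_{0}}$ in a Weinstein neighbourhood (with $f_{L_{0}}$ extended to be linear near infinity), the pair $(W,W')$ is cylindrically distinct at infinity with $W\pitchfork W'$, and the Floer complex $CF(W,W';\mathbb{Z}(\pi_{1}W),\mathbf{\widetilde{J}}_{0})$ coincides, as a based $\mathbb{Z}(\pi_{1}W)$-complex, with $CM(W;\mathbb{Z}(\pi_{1}W),(f_{L_{0}},g_{\widetilde{J}}))$. Hence $\tau(W,L_{0})=\tau\big(CF(W,W';\mathbb{Z}(\pi_{1}W),\mathbf{\widetilde{J}}_{0})\big)$.

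Now I would invoke the invariance result. By Theorem \ref{theorem invariance}, applied with $S=L_{0}$, the Whitehead torsion of $CF(W,\phi_{t}(W');\mathbb{Z}(\pi_{1}W),\mathbf{\widetilde{J}}_{t})$ is unchanged along a horizontal isotopy $\{\phi_{t}\}_{t\in[0,1]}$. As in the proof of Theorem \ref{Teorema} for the case $S=L_{0}$, there is a horizontal isotopy $\{\phi_{t}\}$ with $W\cap\phi_{1}(W')=\emptyset$; at that endpoint the Floer complex has no generators and is the zero complex, whose torsion vanishes. Stringing these identities together yields $\tau(W,L_{0})=0$, and the $s$-cobordism theorem completes the proof.

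The step I expect to be the real obstacle is Theorem \ref{theorem invariance}, the only ingredient above that is not essentially formal. Along a generic horizontal isotopy the Floer complex changes by birth/death of pairs of generators and by handle-slide bifurcations, and one must check that the resulting change of basis and chain homotopy are realised by matrices in $E(\pi_{1}W)$, so that the torsion is unaffected; this is where Sullivan's stabilisation technique and Lee's gluing theorem for a degenerate Floer strip come in, the delicate points being the compactness and gluing analysis near the degenerate strip and the Maslov-index bookkeeping that confines the transition data to elementary matrices. A genuine secondary difficulty, postponed to Section \ref{Chapter orientations} and handled following FOOO and Lee, is that the whole discussion must be carried out over $\mathbb{Z}(\pi_{1}W)$ rather than $\mathbb{Z}_{2}(\pi_{1}W)$: one needs a coherent, gluing-compatible system of orientations on the relevant moduli spaces for the Floer torsion to be well defined in $Wh(\pi_{1}W)$ in the first place, and for Theorem \ref{theorem invariance} to hold with signs.
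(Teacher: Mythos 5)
Your proposal is correct and follows essentially the same route as the paper: Corollary \ref{Col1} gives the $h$-cobordism, Milnor's theorem and Proposition \ref{PropCob} identify $\tau(W,L_{0})$ with the torsion of the $\mathbb{Z}(\pi_{1}W)$-Floer complex, and Theorem \ref{theorem invariance} together with displaceability by a horizontal isotopy forces that torsion to vanish, so the $s$-cobordism theorem applies. You have also correctly located the genuine technical burden in the invariance theorem (Sullivan's stabilization, Lee's degenerate gluing) and in the orientation/sign issues over $\mathbb{Z}(\pi_{1}W)$.
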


\subsection{Bifurcation analysis}

The technique of bifurcation analysis is used for proving the invariance of Floer/Morse complexes under some parameter. It is also used to prove the independence under Hamiltonian perturbation of torsion-type invariants associated to Floer/Morse complexes. 

In this subsection we explain the bifurcation analysis technique. The idea of this technique is to consider a generic homotopy joining the two generic parameters defining two Floer complexes. At each time of the homotopy the parameters associated have some regularity. At certain times along the homotopy regularity is lost. The technique consists in describing how the complex changes after each one of these moments called bifurcations.

Floer's first proof of the independence of the Floer homology under Hamiltonian perturbations, used bifurcation analysis \cite{Fl}. Through a series of gluing theorems, he describes the changes of the Floer complex after a bifurcation occurs. The complete proofs of these gluing theorems, for the Floer complex, appeared in Lee \cite{Le1}, \cite{Le2}.  

Basic bifurcation theory appears in the Morse theory. Given two Morse-Smale pairs $(f_{0},X_{0})$ and $(f_{1},X_{1})$ ( $f_{i}$ is a Morse function and $X_{i}$ a Morse-Smale pseudo-gradient vector field for $i=0, 1$), consider a homotopy joining these parameters $\{(f_{\lambda},X_{\lambda})\}_{\Lambda}$. From Cerf's work \cite{Ce}, it is known that in a generic homotopy, for a finite number of parameters, the pair $(f_{\lambda},X_{\lambda})$ is not Morse-Smale. The two phenomena that generically occur at isolated times are:
\begin{enumerate}
\item There is a degenerated critical point, that has only one degenerate direction with a quadratic tangency, where two non-degenerated critical points are born or die. This phenomenon is called Birth-Death.
\item There are two non-degenerated critical points of same index for which the unstable and stable manifolds do not intersect transversely, yielding a degenerate flow line. This phenomenon is called Handle-Slide.
\end{enumerate}

For the Floer complex of Lagrangian intersections, when the Lagrangians are exact, the bifurcations occurring along a homotopy joining two generic set of parameters are analog to the ones occurring to the Morse complex.
\begin{definition}
Fix a pair of exact Lagrangians with cylindrical ends $(W,W')$.
The data $\mathcal{D}=(\psi^{H},\mathbf{J})$, where $\psi^{H}\in Ham(\tilde{M})$ and $\mathbf{J}$ is a time-dependent, compatible almost complex structure is called generic if the associated Floer complex  \\$CF(W, \psi^{H} (W');\mathbb{Z}(\pi_{1}W),\mathbf{J})$ is well defined, namely:
\begin{enumerate}
\item The Lagrangians intersect transversely, $W \pitchfork \psi^{H}(W')$.
\item For every $\mathbf{J}$-holomorphic strip $u$, the linearization of the time dependent Cauchy-Riemann operator $\bar{\partial}_{\mathbf{J}}$ is surjective.
\end{enumerate}
\end{definition}

Let $\Lambda=[0, 1]$ and consider a homotopy $\mathcal{D}_{\Lambda}=\{\mathcal{D}_{\lambda} =(\psi_{\lambda}, \mathbf{J}_{\lambda})\}_{\lambda \in \Lambda}$ joining two generic data $\mathcal{D}_{0}=(\psi^{H_{0}}, \mathbf{J}_{0})$ and $\mathcal{D}_{1}=(\psi^{H_{1}},\mathbf{J}_{1})$. In general, the Floer complex is not well defined at all times of the homotopy.
\begin{definition}
A bifurcation along the homotopy $\mathcal{D}_{\Lambda}$ is a time $\lambda_{0}\in \Lambda$ such that $\mathcal{D}_{\lambda_{0}}$ is not generic.
\end{definition}
To study the changes of the Whitehead torsion of the $\mathbb{Z}(\pi_{1}W)$-Floer complex, we will pick a nice homotopy which will allow as to compare the two complexes and show that they are simply homotopy equivalent. The homotopies that we will consider are defined as follows:
\begin{definition} 
A generic homotopy $\{\mathcal{D}_{\lambda}\}_{\lambda \in \Lambda}$ joining generic data $\mathcal{D}_{0}$ and $\mathcal{D}_{1}$, is a smooth homotopy such that the only bifurcations occur at isolated times and are of the following type:
\begin{enumerate}
\item Births-deaths: At some time $\lambda = \lambda_{0}$ in the homotopy there is a non-transverse intersection point in $W \cap \psi_{\lambda_{0}}^{H}(W)$, such that it has only one degenerate direction with a quadratic tangency, where two non-degenerated intersection points appear or disappear.
\item Handle slides: A degenerate strip $u$ of index $\mu(u)= 0$ between non-degenerate intersection points.
\end{enumerate}
\end{definition}
The following proposition assure us that generic homotopies exist. Let $\Phi_{\Lambda}$ denote the set of horizontal isotopies $\phi_{\lambda}$ joining $id$ with $\psi^{H}$ and denote by  $\mathcal{J}_{\Lambda}$ the space of one-parameter families of time dependent almost complex structures $\mathbf{J}_{\lambda}$ joining $\mathbf{J}_{0}$ with $\mathbf{J}_{1}$, where $\mathbf{J}_{\lambda} \in \mathcal{\widetilde{J}}_{B}$ for each $\lambda$.
\begin{proposition}
Given two generic data $\mathcal{D}_{0}$ and $\mathcal{D}_{1}$, there is a non-empty set $\mathcal{H}\subset \Phi_{\Lambda}\times \mathcal{J}_{\Lambda}$ of regular homotopies $\{\mathcal{D}_{\lambda}\}_{\lambda \in \Lambda}$ joining them.
\end{proposition}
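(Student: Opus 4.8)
The plan is to run a parametric transversality (Sard--Smale) argument, in the spirit of Cerf's theory \cite{Ce} and Floer's original bifurcation proof, adapted to the exact Lagrangian-with-cylindrical-ends setting. The non-generic data one must control form a codimension-one ``wall'' inside the space of admissible homotopies joining the fixed endpoints $\mathcal{D}_{0}$ and $\mathcal{D}_{1}$, so a generic path will meet that wall transversally, finitely often, and at each crossing will display exactly one of the two allowed local models.

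First I would fix the Banach manifold of parameters: equip $\Phi_{\Lambda}\times\mathcal{J}_{\Lambda}$ with a Floer $C^{\varepsilon}$-type topology (or a suitable Sobolev completion). Note that a horizontal isotopy and an almost complex structure in $\mathcal{\widetilde{J}}_{B}$ are already prescribed outside a fixed compact subset of $\widetilde{M}$, so every admissible perturbation is compactly supported; this is exactly the freedom one is allowed to use. Then I would introduce two universal moduli spaces: the space of pairs $(\lambda,p)$ with $p\in W\cap\psi_{\lambda}^{H}(W')$, and the space of pairs $(\lambda,u)$ where $u$ is a $\mathbf{J}_{\lambda}$-holomorphic strip for $\mathcal{D}_{\lambda}$ with boundary on $(W,\psi_{\lambda}(W'))$. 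Because $W$ is exact we have $\omega|_{\pi_{2}(\widetilde{M},W)}=0$, so there is no disk or sphere bubbling and each moduli space carries a uniform energy bound; combined with the $\pi$-holomorphicity of $\mathbf{J}_{\lambda}$ outside $B$ and the open mapping theorem (as in \cite[Lemma 4.2.1]{BiC}), every finite-energy strip has image in a fixed compact set, so Gromov compactness applies.

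The heart of the argument is surjectivity of the linearized operators at the universal level. For the intersection points this is a $1$-jet transversality statement: working in a Weinstein neighborhood, a generic horizontal isotopy makes the family $\lambda\mapsto W\cap\psi_{\lambda}^{H}(W')$ have only fold (Morse) tangencies, so the only degeneration is a quadratic tangency at which a pair of transverse intersection points is born or dies --- the birth--death model. For the strips one argues, as in \cite{FlHoDi},\cite{Oh}, that $\mathbf{J}$-holomorphic strips between transverse intersection points are somewhere injective, so the linearized Cauchy--Riemann operator becomes surjective after perturbing $\mathbf{J}_{\lambda}$ by a compactly supported perturbation near an injective point; this is permissible precisely because the strips stay in a compact set. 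Applying Sard--Smale to the projections of the two universal spaces to $\Phi_{\Lambda}\times\mathcal{J}_{\Lambda}$ yields a residual (hence nonempty) set $\mathcal{H}$ for which: the parametrized space of intersection points is a $1$-manifold projecting to $\Lambda$ with only fold critical points; and, for each pair of (possibly $\lambda$-dependent) intersection points, the parametrized moduli space of strips of formal dimension $0$ is a $1$-manifold --- so that index $-1$ strips never occur, index $0$ strips occur only at isolated $\lambda$, and these are the handle-slides. A further codimension count excludes coincidences (two degenerations at the same $\lambda$, or a birth--death together with a handle-slide), and the compactness just noted forces the bifurcation set to be finite. This produces the claimed $\mathcal{H}$.

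I expect the main obstacle to be the surjectivity step under the constrained perturbation class: one may not perturb the Lagrangians or the almost complex structure near infinity, since the horizontal and cylindrical structures must be preserved, so one must check that compactly supported perturbations already suffice. This works because exactness together with the $\pi$-holomorphicity condition confine all finite-energy strips --- and, after a preliminary horizontal perturbation, all tangency points of the Lagrangians --- into a fixed compact region, where the standard Floer--Hofer--Salamon transversality machinery applies verbatim. A secondary technical point is the precise normal form at a birth--death: one must verify that generically the degenerate intersection point is of fold type, with a single quadratic degenerate direction, which is again a parametric jet-transversality statement, here for the family of graphs $\mathrm{graph}(\epsilon\, dF)$ inside the Weinstein neighborhood.
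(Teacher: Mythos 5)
Your proposal is correct and follows essentially the same route as the paper: the paper simply cites Floer's transversality results (\cite[Lemma 3.3, Proposition 3.2]{Fl}) and observes that they carry over because the regularity of $\Phi_{\Lambda}\times\mathcal{J}_{\Lambda}$ is unaffected by the non-compactness, which is exactly the point you elaborate via the Sard--Smale argument and the confinement of all strips and tangencies to a fixed compact region. Your write-up is in effect an expanded version of the citation the paper relies on, with the only caveat being some loose index bookkeeping in the handle-slide dimension count, which does not affect the argument.
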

The proof appears in \cite[Lemma 3.3]{Fl} and \cite[Proposition 3.2]{Fl}. Floer's proof still applies to this non-compact setting, since the main argument is based on the structure and regularity of the spaces of functions $\Phi_{\Lambda}$ and of almost-complex structures $\mathcal{J}_{\Lambda}$, and the regularity of these spaces is independent of the compactness of the setting. 

Consider a regular homotopy $\{\mathcal{D}_{\lambda}\}_{\lambda \in \Lambda}$ joining generic data $\mathcal{D}_{0}$ and $\mathcal{D}_{1}$. Let $\Lambda_{\text{bif}}$ be the set of parameters where a bifurcation occurs. A regular homotopy defines a one-parameter family of $\mathbb{Z}(\pi_{1}W)$-Floer complexes, $\{CF(W, \psi_{\lambda}(W');\mathbf{J}_{\lambda})\}_{\lambda\in\Lambda -\Lambda_{\text{bif}}}$.
The proof of Theorem \ref{theorem invariance} consists in the observation that through a regular homotopy, the bifurcations that the $\mathbb{Z}(\pi_{1}W)$-Floer complexes suffer are elementary (change of basis or sum with a trivial complex). Then, the $\mathbb{Z}(\pi_{1}W)$-Floer complex associated to $\mathcal{D}_{0}$ is simply homotopy equivalent to the $\mathbb{Z}(\pi_{1}W)$-Floer complex associated to $\mathcal{D}_{1}$. This implies that the Whitehead torsion of the $\mathbb{Z}(\pi_{1}W)$-Floer complex is invariant under Hamiltonian perturbations.

In the following subsections we describe the behavior of the $\mathbb{Z}(\pi_{1}W)$-Floer complex after the two types of bifurcations, Birth-Deaths and Handle-Slides. To do this, let us assume that $\{\mathcal{D}_{\lambda}=(\psi_{\lambda}, \mathbf{J})\}_{\lambda \in \Lambda}$ is a regular homotopy joining two generic sets of data $\mathcal{D}_{0}=(id, \mathbf{J}_{0})$ with $\mathcal{D}_{1}=(\psi^{H}, \mathbf{J}_{1})$. We will assume that exactly one of the two phenomena at time $\lambda =\lambda_{0}$ occurs, and we will describe the changes in $CF(W, \psi_{\lambda_{0}+\epsilon}(W');\mathbf{J}_{\lambda_{0}+\epsilon})$ with respect to $CF(W, \psi_{\lambda_{0}-\epsilon}(W');\mathbf{J}_{\lambda_{0}-\epsilon})$.

\subsubsection{Birth-death bifurcation analysis}

In this subsection we study the birth-type bifurcation following \cite{Su}; the analysis for a death type bifurcation is completely analogous.

Assume that there is a birth at $\lambda = \lambda_{0}$. Thus, there is a non-transverse intersection point $x_{0} \in W\cap \psi_{\lambda_{0}}(W')$, which gives birth to two families of non-degenerated intersection points denoted by $\{x^{-}_{\lambda}\}_{\lambda \in\Lambda}$, and $\{x^{+}_{\lambda} \}_{\lambda \in\Lambda}$ with $ x^{-}_{\lambda}, x^{+}_{\lambda}  \in W\cap \psi_{\lambda}(W')$ where $\lambda > \lambda_{0}$.

In this situation the $\mathbb{Z}(\pi_{1}W)$-Floer complex changes in the following way:
\begin{enumerate}
\item There are two new generators for the module $\mathbb{Z}(\pi_{1}W)(W \cap  W_{\lambda})$, namely $$\mathbb{Z}(\pi_{1}W)(W \cap  W_{\lambda_{0}+\epsilon}) = \mathbb{Z}(\pi_{1}W)(W \cap  W_{\lambda_{0}-\epsilon})\oplus \mathbb{Z}(\pi_{1}W)\langle x^{-}_{\lambda_{0}+\epsilon}, x^{+}_{\lambda_{0}+\epsilon}\rangle.$$ There is a new moduli space $\mathcal{M}^{1}( x^{-}_{\lambda}, x^{+}_{\lambda})$, composed by strips connecting the two new generators.
\item New strips connecting old intersection points to the new ones can appear. The spaces $\mathcal{M}^{1}(x, y)$ and $\mathcal{M}^{1}(y, x)$ where $x = x^{-}_{\lambda_{0}+\epsilon}, x^{+}_{\lambda_{0}+\epsilon}$ and $y \neq x^{-}_{\lambda_{0}+\epsilon}, x^{+}_{\lambda_{0}+\epsilon}$ could be non-empty. This will affect the differential of the $\lambda_{0}+\epsilon$ complex.
\end{enumerate}
\begin{definition}
We say that a birth or a death is independent, if the last two spaces are empty.
\end{definition}
An independent birth/death does not change the simple homotopy type of the complex, and thus the Whitehead torsion. See also Lemma 4.5.

A stabilization technique of M. Sullivan \cite{Su}, reduces the general case of a birth/death to that of independent birth/deaths as we explain below.

The crucial point here is the existence of a regular homotopy connecting the data $\mathcal{D}_{\lambda_{0}-\epsilon}$ with $\mathcal{D}_{\lambda_{0}+\epsilon}$, such that after applying the stabilization technique twice the corresponding spaces in (2) are empty. Since the stabilization operation does not change the simple homotopy type of a complex, the $\mathbb{Z}(\pi_{1}W)$-Floer complex before the birth-death is therefore simply homotopy equivalent to the $\mathbb{Z}(\pi_{1}W)$-Floer complex after the birth-death.

Due to the assumption that the birth is isolated, there exist a Darboux chart $(U,\sigma)$, where $U \subset
(\tilde{M},\tilde{\omega})$ is a Darboux neighborhood centered at $x_{\lambda_{0}}$ such that:
\begin{enumerate}
\item The map $\sigma$ is a symplectomorphism $\sigma:(U,\tilde{\omega}) \rightarrow (I^{2n}, \omega_{std})$, where $I = [-3, 3].$
\item The image of $W$ under $\sigma$ is modeled by the zero section; $\sigma(U\cap W) = I^{n} \times \{0\}.$
\item For a horizontal Hamiltonian isotopy $\{\phi_{\lambda}\}_{\lambda\in \Lambda}$, the family of Lagrangian $\{\phi_{\lambda}(W')\}_{\lambda\in \Lambda}$, is modeled by the graph of the differential of the family of smooth functions $f_{\lambda}: I^{n}\rightarrow \mathbb{R},$ defined by $f_{\lambda}(q_{1},...,q_{n}) = \dfrac{1}{9} q_{1}^{3} - (\lambda - \lambda_{0}) q_{1}  + Q(q_{2},...,q_{n}),$ where $Q$ is some non-degenerated quadratic function and $\lambda_{0}\in (0,1).$ Then the image of the family under $\sigma$ is given by the set $$\sigma(U\cap W_{\lambda}) = \{(q_{1},...,q_{n}, \dfrac{\partial f_{\lambda}}{\partial q_{1}},...,\dfrac{\partial f_{\lambda}}{\partial q_{n}}) | (q_{1},...,q_{n}) \in I^{n}\}.$$
\end{enumerate}
\begin{remark}
The critical points of the family $f_{\lambda}(q_{1},...,q_{n})$ are in correspondence with the intersection points of the family $\{W \cap \phi_{\lambda}(W')\}_{\lambda\in \Lambda}$. The family of functions $f_{\lambda}$ has a unique degenerated critical point at $\lambda_{0}$; since the equation $\dfrac{\partial f_{\lambda}}{\partial q_{1}}= \dfrac{1}{3}q_{1}^{2}-(\lambda - \lambda_{0}) = 0$ has no solution for $\lambda < \lambda_{0}$ and two non-degenerated solutions for $\lambda > \lambda_{0}$. 
\end{remark}

With the previous description we can study the birth-death bifurcation in the Lagrangian intersection setting using the classical birth-death bifurcation in a family of Morse functions.
 
\subsubsection{Stabilization technique applied to a Birth bifurcation} 

\begin{definition}
The quadratic stabilization of a smooth function $f:X\rightarrow \mathbb{R}$ is the function $f + Q:X\times \mathbb{R}^{N}\rightarrow \mathbb{R}$, where $Q$ is a non-singular quadratic function on ${R}^{N}$.
\end{definition}
\begin{remark}
Eliashberg and Gromov \cite{ElGr} used stable Morse theory to study Lagrangian intersection theory with finite dimensional methods. They remarked that the stabilization process does not change the simple homotopy type of the $\mathbb{Z}(\pi_{1}X)$-Morse complex.
In the present context we will set $N = 1$, so we will study the family of functions $f_{\lambda}\pm x^{2}$.
\end{remark}
To understand the behavior of the moduli spaces when new intersection points appear we study the stabilized setting defined in analogy with the Morse case.
\begin{definition}
The stabilization of the symplectic manifold $(\tilde{M}, \tilde{\omega})$ is defined by
 $$(S(\tilde{M}), S(\tilde{\omega})):= (\tilde{M}\oplus \mathbb{R}^{2}, \tilde{\omega} \oplus  \omega_{\text{st}}).$$
The stabilization of a pair of fixed Lagrangians $(W, W')$, with $W, W'\subset \tilde{M}$ is defined by $S(W) := W\times \{(x,0) \vert x\in \mathbb{R}\}$ and the second Lagrangian can be stabilized either in a positive or in a negative way depending on the choice of sign, $S(W') := W\times \{(x,\pm 2x) \vert x\in \mathbb{R}\}.$ When required we will denote $S_{-}(W')$ or $S_{+}(W')$ to mean a positive or a negative stabilization.
\end{definition}

In the new setting, after a negative stabilization, the Darboux chart $(U,\sigma)$ is replaced by $(U\times \mathbb{R}^{2},\sigma \times id_{\mathbb{R}^{2}})$. The family of Lagrangians $\{\phi_{\lambda}(W')\}_{\lambda\in \Lambda}$ is now described locally by the stabilized function $F_{\lambda}(q_{1},...,q_{n}, x) = f_{\lambda}(q_{1},...,q_{n}) - x^{2}$.
Thanks to the new dimension we can find a one-parameter family of smooth functions connecting $F_{0}$ to $F_{1}$, close enough to the family $F_{\lambda}$ but with a birth having a non-zero component on the fiber of the stabilization.
To do this we need to have control on the growth of the derivative in the degenerate direction, $\frac{\partial F_{\lambda}}{\partial \lambda}$. 

We will now change the family of functions $\{f_{\lambda}\}_{\lambda\in \Lambda}$ which describe locally the birth by the family of functions used by M. Sullivan.

Let $\alpha_{0}:\mathbb{R}\rightarrow\mathbb{R}$ be a smooth bump function with the following properties:
\begin{enumerate}
\item $\alpha_{0}$ is an even function supported on $[-1, 1]$ and non-vanishing on $(-1, 1)$,
\item There is a unique maximum of 1 at 0,
\item $\alpha_{0}'$ has a unique minimum of -2 at $\dfrac{1}{2}$,
\item $\alpha_{0}'(x) = \alpha_{0}'(1 - x)$ for $x \in (0, \dfrac{1}{2})$.  
\end{enumerate}
Let be $\alpha(x) = \alpha_{0}(x + \dfrac{1}{2})$.

Then there exist a Darboux chart, still denoted $(U, \sigma)$, centered at $x_{\lambda_{0}}$, such that the family $\{U \cap \phi_{\lambda}(W')\}_{\lambda\in \Lambda}$ is modeled by the graph of the differential of the family of functions:
\begin{equation}
f_{\lambda}(q_{1},...,q_{n}) = \epsilon q_{1} + \lambda\alpha(q_{1}) + Q(q_{1},...,q_{n}),
\end{equation}
where $0<\epsilon <1$ is arbitrarily small. The critical points of $f_{\lambda}$ correspond to intersection points of $W \cap \phi_{\lambda}(W')$. 
Notice that if $\dfrac{\partial f_{\lambda}}{\partial q_{1}} = \epsilon +  \lambda\alpha'(q_{1})$, then $\dfrac{\partial f_{\lambda}}{\partial q_{1}} = 0$ when $\epsilon +\lambda\alpha'(q_{1}) = 0$. 
\begin{enumerate}
\item If $0 \leq\lambda < \dfrac{\epsilon}{2}$, since $\vert \alpha'(q_{1})\vert \leq 2$, then $\lambda\vert \alpha'(q_{1})\vert < \epsilon $, so there are no critical points for any parameter $\lambda$ in this region.
\item If $\lambda = \dfrac{\epsilon}{2}$, then since $\alpha'(q_{1})$ has a unique minimum of $-2$ at $q_{1} = 0$ then there is a degenerate critical point.
\item If $\lambda > \dfrac{\epsilon}{2}$, then the equation $\epsilon +\lambda\alpha'(q_{1}) = 0$ has two solutions for each fixed $\lambda$, so there are two non-degenerate critical points with positive and negative $q_{1}$-coordinates.
\end{enumerate} 
 
The stabilized family of functions $F_{\lambda}(q_{1},...,q_{n}, x) = f_{\lambda}(q_{1},...,q_{n}) - x^{2}$ can be perturbed as the following theorem asserts:
\begin{theorem}\cite{Su}
\label{sullivan}
There exists a one-parameter family of functions $G_{\lambda}:I^{n}\times \mathbb{R}^{2}\rightarrow \mathbb{R}$ such that for each $c\in \mathbb{R}$:
\begin{enumerate}
\item $G_{0} = F_{0}$ and $G_{0} = F_{0}$,
\item $G_{\lambda}(q_{1},...,q_{n},x) = F_{0}(q_{1},...,q_{n},x) = F_{1}(q_{1},...,q_{n},x)$ near the boundary of $I^{2n}$,
\item For each $\lambda$, the graph of $\nabla G_{\lambda}$ stays inside $I^{2n}$.
\item No deaths of critical points occur and a unique birth occurs arbitrarily close to $(\lambda, \overrightarrow{q}, x) = (\epsilon, \overrightarrow{0}, c)$.
\item At the moment of the birth $supp(G_{\lambda}- F_{0}) \subset (-1,1)^{n}\times (c-1, c+1)$.
\end{enumerate}
\end{theorem}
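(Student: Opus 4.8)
The statement concerns only the $1$-parameter family of generating functions on $I^{n}\times\mathbb R^{2}$, so I would run the whole argument inside finite-dimensional Morse theory, the passage back to the Lagrangian intersection picture being the generating-function correspondence already used above. The plan is as follows. By the choice of the bump $\alpha$, the unstabilised family $f_{\lambda}=\epsilon q_{1}+\lambda\alpha(q_{1})+Q$ has a single degenerate critical point, of birth type, at the parameter $\lambda_{0}=\epsilon/2$ (where $\alpha'(0)=-2$) and at $q=0$; in the stabilised family $F_{\lambda}=f_{\lambda}-x^{2}$ the fibre coordinate $x$ carries the fixed non-degenerate quadratic $-x^{2}$, so every critical point of $F_{\lambda}$ lies on the wall $\{x=0\}$. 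I would use the extra fibre coordinate to \emph{slide} this birth off the wall to the level $\{x=c\}$ and then back, via a modification of $F_{\lambda}$ supported in an arbitrarily small neighbourhood of the birth locus.

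Concretely, fix smooth cut-offs $\beta\colon[0,1]\to[0,1]$ supported in a tiny window about $\lambda_{0}$ with $\beta(\lambda_{0})=1$ and $\beta\equiv0$ near $\{0,1\}$; $\rho_{q}\colon I^{n}\to[0,1]$ equal to $1$ near $0$ with $\operatorname{supp}\rho_{q}\subset(-1,1)^{n}$; and $\rho_{x}\colon\mathbb R\to[0,1]$ equal to $1$ on a neighbourhood of the segment joining $0$ to $c$ with $\operatorname{supp}\rho_{x}\subset(c-1,c+1)$. With $a(\lambda,q,x)=c\,\beta(\lambda)\,\rho_{q}(q)\,\rho_{x}(x)$ I would set
\[
G_{\lambda}(q,x)=f_{\lambda}(q)-\bigl(x-a(\lambda,q,x)\bigr)^{2}+a(\lambda,q,x)^{2}=F_{\lambda}(q,x)+2x\,a(\lambda,q,x).
\]
Since $\beta$ vanishes near $\lambda=0,1$ one has $G_{0}=F_{0}$, $G_{1}=F_{1}$; off the support of $\beta\rho_{q}\rho_{x}$ one has $G_{\lambda}=F_{\lambda}$; and the modification $G_{\lambda}-F_{\lambda}=2xa$ is supported in $(-1,1)^{n}\times(c-1,c+1)$ — this is conditions (1), (2) and (5). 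Near the birth point $(q,x)\approx(\overrightarrow{0},c)$ the cut-offs $\rho_{q},\rho_{x}$ are identically $1$, so there $a=c\beta(\lambda)$ with $\partial_{q}a=\partial_{x}a=0$, and hence $\partial_{q_{1}}G_{\lambda}=\epsilon+\lambda\alpha'(q_{1})$, $\partial_{q_{i}}G_{\lambda}=\partial_{q_{i}}Q$ for $i\ge2$, and $\partial_{x}G_{\lambda}=-2\bigl(x-c\beta(\lambda)\bigr)$; thus $G_{\lambda}$ keeps exactly the birth-type degeneracy, now located at $q=0$, $x=c$.

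For conditions (3) and (4) I would estimate the modification $2xa=2xc\,\beta(\lambda)\rho_{q}(q)\rho_{x}(x)$: for fixed $\lambda$ its $C^{1}$-norm in $(q,x)$ is $O(|c|)$, with constant depending only on $\rho_{q},\rho_{x}$, the (possibly large) derivative $\beta'$ entering only $\partial_{\lambda}G_{\lambda}$, which is irrelevant to the critical-point equations. Taking $|c|$ small — which is all that is needed downstream, the general $c$ being handled as in \cite{Su} — the modification is $C^{1}$-small, whence: (i) no critical point is created or destroyed away from the designed birth, because $\partial_{q_{1}}F_{\lambda}=\epsilon+\lambda\alpha'(q_{1})$ is bounded away from $0$ outside the birth model near $(\lambda_{0},0)$, where the controlled separation speed $\alpha'(0)=-2$ pins the perturbed birth to within any prescribed neighbourhood of $(\lambda_{0},\overrightarrow{0},c)$; (ii) for each $(\lambda,q)$ the map $x\mapsto x-a$ is a diffeomorphism of $\mathbb R$, so $G_{\lambda}(q,\cdot)$ still has a unique, non-degenerate (maximum) critical point in the fibre — ruling out an auxiliary fibre birth--death — and, together with the monotone appearance of the critical points of $f_{\lambda}$, no deaths occur at all; (iii) the graph of $\nabla G_{\lambda}$ stays inside $I^{2n}$, because that of $\nabla F_{\lambda}$ already does and the modification is $C^{1}$-small, after shrinking the chart if necessary.

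The hard part is the \emph{simultaneous} control underlying (i)--(iii): the modification must have moved the fibre centre out to $c$ precisely at the instant $\lambda_{0}$ at which the $q_{1}$-direction degenerates, it must be compactly supported near the birth locus, and it must not itself produce a bifurcation in the fibre while being switched on and off. This is exactly what forces both the passage to the stabilised manifold — the single extra fibre coordinate is what provides the room to realise the relocated birth — and the very specific even bump $\alpha$ with $\alpha'$ attaining its minimum value $-2$ at a single point, which bounds $\partial_{q_{1}}F_{\lambda}$ and the birth speed near $\lambda_{0}$ and so keeps the modification $C^{1}$-small enough for the estimates above. Fixing the windows of $\beta,\rho_{q},\rho_{x}$ and the size of $\epsilon$ in the correct order is the technical core; for the remaining details I would follow Sullivan \cite{Su}.
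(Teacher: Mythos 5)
The paper does not actually prove this statement; it is quoted from Sullivan \cite{Su} and used as a black box, so there is no internal proof to compare against. Judged on its own terms, your construction has a genuine gap: it only works for $|c|$ small, whereas the theorem is asserted for every $c\in\mathbb{R}$ and the downstream application (the lemma following the theorem, where the hypothesis $C-1>0$ is imposed so that the bump in the projection $Q_{i}$ sits away from the $x_{i}$-axis) needs precisely the case $c>1$. Your ansatz $G_{\lambda}=F_{\lambda}+2xa$ with $a=c\,\beta\rho_{q}\rho_{x}$ requires $\rho_{x}\equiv 1$ on a neighbourhood of the segment $[0,c]$ (otherwise the fibre critical point is not slid from $0$ to $c$) \emph{and} $\operatorname{supp}\rho_{x}\subset(c-1,c+1)$; these are incompatible once $|c|\geq 1$. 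Worse, if you enforce the support condition for $c>1$, then $G_{\lambda_{0}}=F_{\lambda_{0}}$ near $x=0$, and $F_{\lambda_{0}}$ already has its degenerate birth-type critical point at $(q,x)=(\overrightarrow{0},0)$ (where $\epsilon+\lambda_{0}\alpha'(0)=0$ and $\partial_{x}(-x^{2})=0$); that degenerate point survives your modification, so the birth is neither unique nor located near $(\overrightarrow{0},c)$, contradicting item (4). Deferring "the general $c$" to \cite{Su} is therefore not a deferral of routine detail: for $c>1$ your construction provably fails and a different one is needed.

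The structural point you miss is encoded in item (5) being stated relative to $F_{0}$ rather than $F_{\lambda}$, a distinction you explicitly argue away. At the moment of the birth the family must agree with $F_{0}$ -- which has \emph{no} critical points in the chart, since $\partial_{q_{1}}F_{0}=\epsilon\neq 0$ -- outside a small box around $(\overrightarrow{0},c)$; this is what guarantees both that the unique birth happens at $x=c$ and that, after the second stabilization, the perturbed Lagrangian projects to the unbroken line $y_{i}=\mp 2x_{i}$ outside $(C-1,C+1)$, which is what the open-mapping/finite-energy argument consumes. So the correct construction does not perturb the path $\lambda\mapsto F_{\lambda}$ (whose birth at $x=0$ is already baked into the term $\lambda\alpha(q_{1})$); instead it builds a new path from $F_{0}$ to $F_{1}$ that first switches on a cancelling-pair model supported in a small neighbourhood of $(\overrightarrow{0},c)$, creating the birth there, and only afterwards interpolates the remaining difference $F_{1}-F_{0}=\alpha(q_{1})$ while dragging the newborn pair back to $x=0$, with no further bifurcations. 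Your fibre-sliding idea, your use of the birth speed $\alpha'(0)=-2$, and the $C^{1}$-smallness estimates are all sensible ingredients, but they need to be reorganized around this two-stage path to recover the statement as used.
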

With the help of the previous theorem and the open mapping theorem we prove the following Lemma:
\begin{lemma}\cite[Theorems 3.7, 3.12]{Su}
Let $(W,W')$ be a pair of exact Lagrangians with cylindrical ends. Consider two generic data $(id, \mathbf{J}_{0})$ and $(\phi_{1}^{H}, \mathbf{J}_{1})$ defining Floer complexes associated to the pair. Then there exist a regular homotopy $\{(\psi_{\Lambda}, \mathbf{J}_{\lambda})\}_{\lambda \in \Lambda}$ between the generic data such that, if  a birth at time $\lambda = \lambda_{0}$ occurs, then after stabilizing twice the birth is independent.
\end{lemma}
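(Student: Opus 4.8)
The strategy is to run M.~Sullivan's bifurcation argument \cite{Su} in the present, non-compact, setting of Lagrangians with cylindrical ends. The only point that does not transfer verbatim is the a priori confinement of the Floer strips entering the analysis: in \cite{Su} it is guaranteed by the hypothesis $\omega|_{\pi_{2}(M,L)}=0$, whereas here it must be supplied by the open mapping theorem applied to the projection $\pi$, just as in the compactness statement \cite[Lemma 4.2.1]{BiC}. I would organize the argument in three steps.

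\textbf{Step 1 (localizing the birth).} Using the preceding proposition on the existence of generic homotopies, fix a regular homotopy $\{(\psi_{\lambda},\mathbf{J}_{\lambda})\}_{\lambda\in\Lambda}$ from $(id,\mathbf{J}_{0})$ to $(\phi_{1}^{H},\mathbf{J}_{1})$; its bifurcations are isolated, so it suffices to modify the homotopy in a neighbourhood of a single birth, occurring at time $\lambda_{0}$ and point $x_{\lambda_{0}}$. Since the birth is isolated, after a $C^{\infty}$-small change of the homotopy it is carried inside a Darboux chart $(U,\sigma)$ in which $\sigma(U\cap W)=I^{n}\times\{0\}$ and the family $\{\phi_{\lambda}(W')\}$ is the graph of $df_{\lambda}$ for Sullivan's normal form $f_{\lambda}(q)=\epsilon\,q_{1}+\lambda\,\alpha(q_{1})+Q(q_{2},\dots,q_{n})$ with $0<\epsilon\ll1$. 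Replacing the cubic model by this one is precisely what yields the uniform bound on the derivative in the degenerate direction needed in Step 2; note also that over the relevant $\lambda$-window the only intersection points inside the chart are the two new ones born at $\lambda_{0}$.

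\textbf{Step 2 (pushing the birth into a stabilizing fiber).} Replace the data by a stabilization $(S(\widetilde M),S(\widetilde\omega),S(W),S_{\pm}(W'))$ together with a split, autonomous almost complex structure $\widetilde J\oplus J_{std}\in\mathcal{\widetilde{J}}_{B}$; appending a quadratic generating-function variable merely shifts the grading of the Floer complex and hence does not change its simple homotopy type (Eliashberg--Gromov \cite{ElGr}), so it suffices to make the birth independent after stabilizing. In the chart the birth becomes $F_{\lambda}=f_{\lambda}\pm x^{2}$, and Theorem~\ref{sullivan} provides a one-parameter family $G_{\lambda}$ that equals $F_{0}=F_{1}$ near $\partial I^{2n}$, keeps $\nabla G_{\lambda}$ inside $I^{2n}$, has no deaths, and has a single birth arbitrarily close to $(\lambda,\overrightarrow{q},x)=(\epsilon,\overrightarrow{0},c)$ with $\operatorname{supp}(G_{\lambda}-F_{0})\subset(-1,1)^{n}\times(c-1,c+1)$, the constant $c$ being ours to choose. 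Reading $G_{\lambda}$ back as a horizontal isotopy, and keeping $\mathbf{J}$ split, gives a new regular homotopy that agrees with the old one away from the chart and whose only event over the relevant window is this birth; the two new generators $x^{\pm}_{\lambda_{0}+\epsilon}$ now sit near base coordinate $(\overrightarrow{0},c)$ in the stabilizing fiber, while every old generator is left untouched and lies in a fixed compact set disjoint from $(-1,1)^{n}\times(c-1,c+1)$ once $c$ is large.

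\textbf{Step 3 (the new moduli spaces are empty --- the main obstacle).} It remains to show that for $c$ large the spaces $\mathcal{M}^{1}(x^{\pm}_{\lambda_{0}+\epsilon},y)$ and $\mathcal{M}^{1}(y,x^{\pm}_{\lambda_{0}+\epsilon})$ with $y$ an old generator are empty; this is exactly the content of \cite[Theorems 3.7, 3.12]{Su} and is where essentially all the analysis lies. For a split almost complex structure a Floer strip on the stabilized pair decomposes as $u=(u_{\widetilde M},u_{\mathrm{fib}})$, with $u_{\mathrm{fib}}$ a $J_{std}$-holomorphic strip in the stabilizing $T^{*}\mathbb{R}\cong\mathbb{C}$ whose boundary lies on the zero section and on the graph of $\partial G_{\lambda}/\partial x$; since the projection to the stabilizing fiber is holomorphic for the split structure, the open mapping theorem constrains $u_{\mathrm{fib}}$, while exactness makes the energy of $u$ a difference of primitives which a stabilization shifts by $\pm c^{2}+O(1)$ relative to the bounded actions of the old generators. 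Combining these two facts and taking $c$ large rules out the index-one strips running out of the new pair after a stabilization of one sign, and those running into it after a stabilization of the opposite sign, so that stabilizing twice makes the birth independent; and an independent birth changes neither the simple homotopy type nor the Whitehead torsion of the $\mathbb{Z}(\pi_{1}W)$-Floer complex. Carrying out this modification near every birth of the chosen homotopy produces the asserted regular homotopy. The difficulty, exactly as in \cite{Su}, is the estimate of this last step together with the verification that it controls the global moduli spaces and not merely those visible in the chart; everything else is a transcription of Sullivan's finite-dimensional arguments, which are insensitive to the cylindrical ends.
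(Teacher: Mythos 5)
Your Steps 1 and 2 reproduce the paper's route exactly (Sullivan's normal form in a Darboux chart, then two applications of Theorem \ref{sullivan} after stabilizing with opposite signs), so the whole weight of the lemma falls on Step 3, and that is where your argument does not close. The mechanism you state is that ``the energy of $u$'' is a difference of primitives shifted by $\pm c^{2}$ relative to the bounded actions of the old generators, so that taking $c$ large kills one direction per sign of stabilization. Applied to the total energy, this cannot work for the \emph{double} stabilization you need: the two stabilizations necessarily carry opposite signs (fiber primitives $-x_{1}^{2}$ and $+x_{2}^{2}$), so after pushing the birth to fiber coordinates $(x_{1},x_{2})\approx(c,c)$ the shifts cancel and the action of $x^{\pm}_{\lambda_{0}+\epsilon}$ is again $O(1)$; choosing unequal constants only trades one direction for the other, since a net shift that is very negative re-admits the incoming strips and one that is very positive re-admits the outgoing ones. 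Total-energy positivity can never exclude both directions simultaneously. What would make a positivity argument work is to apply it \emph{per fiber}: each projection $\pi_{i}\circ u$ to $\mathbb{R}^{2}_{(x_{i},y_{i})}$ is separately holomorphic, hence has separately nonnegative area, and it is these two fiber areas (not their sum) that are forced to be $\approx\mp c^{2}$ for the two forbidden directions. You do not say this, and as written the estimate is the wrong one.

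The paper's actual Step 3 is purely geometric and needs only $C-1>0$, not $C$ large: for the split structure the boundary of $\pi_{1}\circ u$ lies on the $x_{1}$-axis and on the line $y_{1}=-2x_{1}$ modified only in a compact window, orientation-preservation confines the image to one of the wedges $R_{1}$, $R_{2}$, and the open mapping theorem then forces the image to contain the unbounded component $R_{1}-Q_{1}$, contradicting $E(u)<\infty$; the projection $\pi_{1}$ (negative stabilization) kills strips leaving $x^{\pm}_{\lambda_{0}+\epsilon}$ and $\pi_{2}$ (positive stabilization) kills strips arriving, and both constraints hold simultaneously on the doubly stabilized data. You name the open mapping theorem as a ``constraint'' on $u_{\mathrm{fib}}$ but never extract a contradiction from it, and you explicitly defer ``the estimate of this last step''; since that step \emph{is} the lemma, the proposal as written does not prove it.
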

\begin{proof}
Consider the family of Lagrangian $\tilde{W}_{\lambda}$ defined locally by the functions $G_{\lambda}$ from Theorem \ref{sullivan}. This family of Lagrangians has a unique birth arbitrarily close to $(\lambda_{0}, \overrightarrow{q}, x_{1}) = (\dfrac{\epsilon}{2}, 0, C)$. Stabilize in a positive way the new setting. A second application of Theorem \ref{sullivan} to the new family of functions $G_{\lambda}$,  implies that there is another family of Lagrangians $\tilde{\tilde{W}}_{\lambda}$, defined locally by a family of functions $\tilde{G}_{\lambda}$, with a unique birth arbitrarily close to $(\lambda_{0}, \overrightarrow{q}, x_{1}, x_{2}) = (\dfrac{\epsilon}{2}, 0, C, C)$. We will show that, for any $x \in S^{2}(W)\cap \tilde{\tilde{W}}_{\lambda_{0}+ \epsilon}$ the moduli spaces of $\mathbf{J}_{\lambda_{0}+\epsilon}\oplus J_{\text{std}}\oplus J_{\text{std}}$-holomorphic strips with only one of the new intersection points involved $x^{\pm}_{\lambda_{0} + \epsilon}$, are empty:
\begin{equation}
\mathcal{M}^{1}_{\lambda_{0} + \epsilon}(x, x^{\pm}_{\lambda_{0} + \epsilon}) = \mathcal{M}^{1}_{\lambda_{0} + \epsilon}(x^{\pm}_{\lambda_{0} + \epsilon}, x) = \emptyset.
\end{equation}
This implies that after stabilizing twice and a double application of Theorem \ref{sullivan}, the birth is independent.

The spaces in (11) are empty. To see this, let $(x_{i}, y_{i}) \in \mathbb{R}^{2}$  be the local coordinates corresponding to the $\mathbb{R}^{2}$ space added after the first and the second stabilization, where $i=1,2$. Consider the projection map $\pi_{i}:\tilde{M}\times \mathbb{R}^{2}_{(x_{1},y_{1})}\times \mathbb{R}^{2}_{(x_{2},y_{2})}\rightarrow \mathbb{R}_{x_{i}}$. This map is $(\mathbf{J}_{\lambda_{0}+ \epsilon}\oplus J_{\text{std}}\oplus J_{\text{std}}, J_{\text{std}})$-holomorphic.
We have the following:
\begin{enumerate}
\item The image of $S^{2}(W)$ under the projection maps $\pi_{i}$ is the $x_{i}$-axis.
\item Assume $C-1 > 0$ in Theorem \ref{sullivan}, then $\text{supp}(\tilde{G}_{\lambda_{0}}-F_{0}) \subset (-1,1)^{n} \times (C-1, C+1)\times (C-1, C+1)$. This implies the the image of $\tilde{\tilde{W}}_{\lambda_{0}+ \epsilon}$ under the $\pi_{i}$-projection coincides with the image of $S_{+}(S_{-}(W'))$ outside $(-1,1)^{n}$\\$\times (C-1, C+1) \times (C-1, C+1)$. Thus for $i=1$ the image is the line $y_{1} = -2x_{1}$ outside $(-1,1) \times (C-1, C+1)$ union with some curve inside $(-1,1) \times (C-1, C+1)$. For $i=2$, the image is the line $y_{2} = 2x_{2}$ outside $(-1,1) \times (C-1, C+1)$ union with some curve inside $(-1,1) \times (C-1, C+1)$. Denote by $Q_{i} = \pi_{i}(S_{+}(S_{-}(W')))$.  
\end{enumerate}
Suppose there is a strip $u\in \mathcal{M}^{1}_{\lambda_{0} + \epsilon}(x, x^{\pm}_{\lambda_{0} + \epsilon})$ and consider the map $\pi_{1}\circ u: \mathbb{R}\times [0, 1] \rightarrow \mathbb{R}^{2}$. This map is holomorphic so it preserves orientations. Moreover, due to the boundary conditions, the regions where the image of the strip are mapped (at least partially), by the map $\pi_{1}\circ u$ are: $R_{1}=\{(x_{1},y_{1}) \vert -2x_{1} < y_{1} \hspace{0.2cm} \& \hspace{0.2cm} 0 < y_{1}\}$ and $R_{2}=\{(x_{1},y_{1}) \vert  y_{1} < 0 \hspace{0.2cm} \& \hspace{0.2cm} y_{1} < -2x_{1}\}$.
\begin{figure}[H]
\includegraphics[scale=0.5]{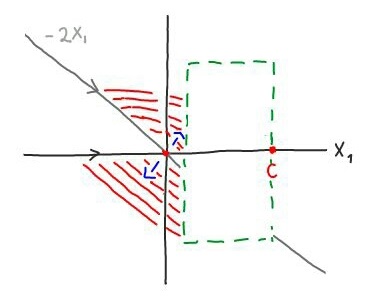}
\caption{Image of a strip in $ \mathcal{M}^{1}_{\lambda_{0} + \epsilon}(x, x^{\pm}_{\lambda_{0} + \epsilon})$.}
\end{figure}
Hence, by the open mapping theorem the image of an open subset of the domain $\mathbb{R}\times [0, 1]$ should be an open subset of one of these regions. If this is the case, then the image will contain the unbounded region $R_{1}-Q_{1}$. This contradicts the fact that the map has finite energy and thus no $\mathbf{J}_{\lambda_{0}+\epsilon}\oplus J_{\text{std}}\oplus J_{\text{std}}$-holomorphic strips from $x^{\pm}_{\lambda_{0}+\epsilon}$ to other intersection point can exist. To see that there are no strips arriving to $x^{\pm}_{\lambda_{0}+\epsilon}$, we use the same argument but using the second projection $\pi_{2}$. 
The isotopy given by $(\psi^{\tilde{G}}_{\lambda}, \mathbf{J}_{\lambda})$, where $\tilde{G}$ is the Hamiltonian generating the family in Theorem \ref{sullivan}, stabilized twice, satisfies the claim.
\end{proof}

The following lemma describes the moduli space of strips connecting the newly born intersection points between themselves.
\begin{lemma}\cite[Lemma 3.11]{Su}
Suppose there exists an $\epsilon > 0$ and a Darboux chart $(U, \phi)$ in $\tilde{ M}$ of the degenerate point $x_{0}$, such that for all $\lambda \in (\lambda_{0}, \lambda_{0} +\epsilon)$, there is a symplectomorphism $\phi: (U, \mathbf{J}_{\lambda}, \omega) \rightarrow (I^{2n}, J_{std}, dq \wedge dp)$, where $I$ is some small interval which contains $( -\delta, \delta)$ for some $\delta > 0$. Then, for any $\epsilon' \in (0, \epsilon)$, the moduli space $\mathcal{M}_{\lambda_{0}+ \epsilon'}(x^{-}_{\lambda_{0}+\epsilon'}, x^{+}_{\lambda_{0}+\epsilon'}) =
\{u\}$ contains a unique element.
\end{lemma}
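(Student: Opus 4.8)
The plan is to localize the problem inside the Darboux chart provided by the hypothesis, reduce it to the cotangent--bundle picture of Floer \cite{Fl2}, and then solve the resulting one--dimensional Morse problem by hand. Concretely: (i) show that every strip from $x^-_{\lambda_0+\epsilon'}$ to $x^+_{\lambda_0+\epsilon'}$ has image in $U$; (ii) in the chart, where $J=J_{std}$, $W$ is the zero section and $\psi_{\lambda_0+\epsilon'}(W')$ is $\mathrm{graph}(df_\lambda)$, identify such strips with negative gradient trajectories of the model function $f_\lambda=\epsilon_0 q_1+\lambda\alpha(q_1)+Q$; (iii) observe that $f_\lambda$ has exactly one such trajectory between the two newly born critical points, hence $\hat{\mathcal M}_{\lambda_0+\epsilon'}(x^-,x^+)$ is a single point (this is what ``$\mathcal M_{\lambda_0+\epsilon'}(x^-,x^+)=\{u\}$'' means, up to the $\mathbb{R}$--reparametrization).

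For (i) I would argue as follows. Since $W$ and $\psi_{\lambda_0+\epsilon'}(W')$ are exact, the symplectic area of a strip $u$ from $x^-$ to $x^+$ depends only on its endpoints via the primitives, as recalled above; evaluating the primitives in the chart (where the primitive of the zero section is constant and that of $\mathrm{graph}(df_\lambda)$ is $f_\lambda$ pulled back, up to a constant), we get $\omega(u)=\pm\big(f_\lambda(q^-_1)-f_\lambda(q^+_1)\big)$, where $q_1^\pm$ are the $q_1$--coordinates of $x^\pm$. Both $q_1^\pm$ converge to $0$ as $\epsilon'\to 0^+$ (they lie in the shrinking window where $\alpha'(q_1)=-\epsilon_0/\lambda$), and $f_\lambda$ is $C^1$--small there, so $\omega(u)=E(u)$ is below the monotonicity threshold of the fixed chart once $\epsilon_0$ (and, if needed, $\epsilon$) are taken small. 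A standard monotonicity/isoperimetric estimate then confines a strip of such small area, asymptotic at both ends to points inside a small ball about $x_0$, to a slightly larger ball still contained in $U$; escape to the cylindrical ends of $W$ and $W'$ is excluded exactly as in the stabilization lemma above, since the projection $\pi\circ u$ is holomorphic off a compact set and, by the open mapping theorem, would otherwise have to cover an unbounded region, contradicting finiteness of the energy. The same energy bound, together with exactness, rules out any additional homotopy class.

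For (ii)--(iii), inside $U\cong I^{2n}\subset T^*\mathbb R^n$ the almost complex structure is the (block--diagonal, $t$--independent) $J_{std}$, so the Floer equation decouples. In the $(q_j,p_j)$--blocks with $j\ge 2$ the boundary conditions are the real axis and the fixed linear Lagrangian determined by the nondegenerate quadratic $Q$, and both asymptotics are the origin; since $Q$ has no critical point between the endpoints, the only finite--energy solution in the relevant (index $0$) component is the constant. The $(q_1,p_1)$--block is precisely the Floer problem for the one--variable function $g(q_1)=\epsilon_0 q_1+\lambda\alpha(q_1)$, whose two critical points in the window are a local maximum $q^-_1$ and a local minimum $q^+_1$ with $g$ monotone in between; by Floer's correspondence \cite{Fl2} (applicable because $g$ is $C^2$--small on this window) the index--$0$ unparametrized strips from $x^-$ to $x^+$ are in bijection with the negative gradient trajectories of $g$ from $q^-_1$ to $q^+_1$, of which there is exactly one, namely the segment along the $q_1$--axis. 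Hence $\hat{\mathcal M}_{\lambda_0+\epsilon'}(x^-,x^+)=\{[u]\}$.

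I expect step (i), the confinement/a priori estimate, to be the main obstacle: one must reconcile the exactness--based energy bound with the monotonicity constant of a \emph{fixed}, non--shrinking Darboux chart (hence the need to keep $\epsilon_0$, and the size of the perturbation, genuinely small) and rule out strips that wander toward the noncompact ends. Once the confinement is in place, the remaining work is the standard cotangent--bundle translation of \cite{Fl2} and an elementary one--variable computation.
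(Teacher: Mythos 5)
Your proposal is correct and follows essentially the same two-step strategy as the paper: an action/energy estimate (via exactness and monotonicity) confines any strip to the Darboux chart, after which the local model is analyzed by projecting onto the coordinate $(q_{i},p_{i})$-planes, with the components for $i\neq 1$ forced to be constant and the $(q_{1},p_{1})$-component reduced to a one-variable problem with a unique solution. The only divergence is in that last block, where the paper concludes $w_{1}=u$ directly from the open mapping theorem in one complex dimension, whereas you invoke Floer's $C^{2}$-small Morse--Floer correspondence; both are valid, yours simply using heavier machinery for a problem that the planar picture settles outright.
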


\begin{proof}
To prove this lemma we show that any strip in $\mathcal{M}_{\lambda_{0}+ \epsilon}(x^{-}_{\lambda_{0}+\epsilon'}, x^{+}_{\lambda_{0}+\epsilon'})$ has image inside the neighborhood $U$, then we show that the strip should be $u$.
Suppose $w \in  \mathcal{M}_{\lambda_{0}+ \epsilon'}(x^{-}_{\lambda_{0}+\epsilon'}, x^{+}_{\lambda_{0}+\epsilon'})$, since $E(w) = \mathcal{A}_{0}(x^{-}_{\lambda_{0}+\epsilon})-\mathcal{A}_{0}(x^{-}_{\lambda_{0}+\epsilon})\rightarrow 0$\footnote{Here, $\mathcal{A}_{H}$ is the action functional,
$$\mathcal{A}_{H}: \mathcal{P}(W, W') \rightarrow \mathbb{R}, \hspace{0.1cm} \mathcal{A}_{H}(\gamma) = -\int_{[0,1]\times [0,1]}\tilde{\gamma}^{*}\omega + \int_{0}^{1} \gamma^{*}H.$$ In the transverse case we set $H = 0$.} when $\epsilon\rightarrow 0$, and $E(w) = \omega(w)$ then $Im(w) \subset U$ for $\epsilon$ small enough.
Since all the strips lie in the Darboux neighborhood, we can describe this situation using the local model. Suppose there is a $\mathbf{J}$-holomorphic strip $w$, consider the map $w$ composed with the projection $\pi_{i}:I^{2n}\rightarrow I^{n}$ mapping $(q_{1},p_{1},\ldots q_{n},p_{n}) \mapsto (q_{i},p_{i})$, denoted by $w_{i}=\pi_{i}\circ w$, the image of $W_{\lambda}$ under $w_{i}$ is a line for $i \neq 1$. Then, applying the open mapping theorem we deduce that $w_{i}$ is constant for $i \neq 1$ and $w_{1}=u$.
\end{proof}

\begin{remark}
The Floer complex associated to a pair of Lagrangian submanifolds $(W, W')$ and the regular data $(H, \mathbf{J})$ coincide with the complex associated to the stabilized data. There are no new intersection points, and by a similar argument to the one used before with involving the open mapping theorem, the moduli spaces are the same. For a detailed proof see \cite[Theorem 3.9]{Su}.
\end{remark}

The immediate consequence of the previous lemmas and the remark is the following description of the Floer complex after a birth occurs, for a regular homotopy:
\begin{align*}
\label{birth-death}
CF(W, \psi_{\lambda + \epsilon}(W'); &\mathbb{Z}(\pi_{1}W), \mathbf{J}_{\lambda - \epsilon}) = \\
&\big(CF(W, \psi_{\lambda - \epsilon}(W'); \mathbb{Z}(\pi_{1}W), \mathbf{J}_{\lambda - \epsilon})\oplus \mathbb{Z}(\pi_{1}W)\langle x^{-}_{\lambda + \epsilon}, x^{+}_{\lambda + \epsilon} \rangle, d_{\lambda - \epsilon}\oplus d_{T}\big), 
\end{align*}

where $d_{T}(x^{-}_{\lambda + \epsilon}) = \text{sign}(u)[\Theta (u)]x^{+}_{\lambda + \epsilon}.$ For definition of $\Theta$, see equation \ref{def-teta}.
\vspace{0.2cm}
\subsubsection{Handle-slide bifurcation analysis}

Let $x_{-}^{\Lambda} = \{x_{-}^{\lambda}\}_{\lambda\in \Lambda}$, $x_{+}^{\Lambda}=\{x_{+}^{\lambda}\}_{\lambda\in \Lambda}$ be two families of non-degenerate intersection points of $W \cap \psi_{\Lambda}(W)$. Let $u$ be a handle-slide from $x_{-}^{\lambda}$ to $x_{+}^{\lambda}$ occurring at $\lambda = \lambda_{0}$. 

For any two families of non-degenerate intersection points $x^{\Lambda}, y^{\Lambda}$, the $n+1$-dimensional one-parameter moduli space is given by:
$$\mathcal{M}^{n}_{\Lambda}(x^{\Lambda}, y^{\Lambda}):= \{ (\lambda, w) \hspace{0.02cm}\mid \hspace{0.02cm} \lambda \in \Lambda, \hspace{0.02cm} w \in \mathcal{M}_{\lambda}^{n}(x^{\lambda}, y^{\lambda}):= \mathcal{M}^{n}(x^{\lambda}, y^{\lambda})\}.$$ 

To describe the change in Whitehead torsion of the complex after a handle-slide bifurcation, it is required to understand the boundary components of the compactified one-parameter moduli spaces $\overline{\mathcal{\hat{M}}^{0}}_{\Lambda}(x^{\Lambda}_{-}, y^{\Lambda})$ and $\overline{\mathcal{\hat{M}}^{0}}_{\Lambda}(y^{\Lambda}, x^{\Lambda}_{+})$, for any one-parameter family of intersection points $y^{\Lambda}$ with $\mu(x_{-}^{\lambda}, y^{\lambda}) = 1$ or $\mu(y^{\lambda}, x_{+}^{\lambda}) = 1$, $\lambda \in \Lambda$.
 
The Gromov compactness theorem states that the moduli spaces can be compactified by broken trajectories. On the other hand, a Gluing theorem shows that each broken trajectory is a boundary component of the moduli space. These two results together imply the following relations:
\smaller
\begin{align}
\partial \overline{\mathcal{\hat{M}}}^{0}_{[\lambda_{0}-\epsilon, \lambda_{0}+ \epsilon]}(x^{\Lambda}_{-}, y^{\Lambda}) &= -\widehat{\mathcal{M}}_{\lambda_{0}-\epsilon}^{0}(x^{\lambda_{0}}_{-}, y^{\lambda_{0}}) \cup \widehat{\mathcal{M}}_{\lambda_{0}+\epsilon}^{0}(x^{1}_{-}, y^{1}) \cup -\{u\}\times\widehat{\mathcal{M}}_{\lambda_{0}}^{0}(x^{\lambda_{0}}_{+}, y^{\lambda_{0}})\\
\partial \overline{\mathcal{\hat{M}}}^{0}_{[\lambda_{0}-\epsilon, \lambda_{0}+ \epsilon]}(x^{\Lambda}, x^{\Lambda}_{+}) &= -\widehat{\mathcal{M}}_{\lambda_{0}-\epsilon}^{0}(x^{\lambda_{0}}, x^{\lambda_{0}}_{+}) \cup \widehat{\mathcal{M}}_{\lambda_{0}+\epsilon}^{0}(x^{1}, x^{1}_{+}) \cup \widehat{\mathcal{M}}_{\lambda_{0}}^{0}(x^{\lambda_{0}}, x^{\lambda_{0}}_{-})\times\{u\},
\end{align}
\normalsize
and out of this we can relate the Whitehead torsion of the Floer complexes before and after a handle-slide.

To show (12) and (13) we need to prove a Gluing theorem. The standard Gluing theorem concerns the gluing of non-degenerate trajectories when the linearization of the Cauchy-Riemann operator at each trajectory is surjective.

If a degenerate orbit appears, the linearization of the Cauchy-Riemann operator at the degenerate trajectory has a one-dimensional cokernel. In this situation we state the gluing theorem as follows,
 
\begin{theorem}[Gluing] \label{Gluing} Suppose there is a handle-slide $(\lambda_{0}, u) \in \mathcal{M}^{0}(x_{-}^{\Lambda}, x_{+}^{\Lambda})$ ($\mu(x_{-}^{\lambda_{0}}, x_{+}^{\lambda_{0}})=0$), for some $\lambda_{0} \in \Lambda = [0, 1]$.
Then for every intersection point $x^{\lambda_{0}}\in W \cap \psi_{\lambda_{0}}(W)$ with $\mu(x_{-}^{\lambda_{0}}, x^{\lambda_{0}})=1$, there is $\rho_{0} >0$ and a continuous map:
$$\mathcal{G}_{-}:\{\hat{u}\}\times \widehat{\mathcal{M}}^{0}_{\lambda_{0}}(x_{+}^{\lambda_{0}}, x^{\lambda_{0}})\times [\rho_{0}, \infty) \rightarrow \widehat{\mathcal{M}}^{0}_{\Lambda}(x^{\Lambda}_{-}, x^{\Lambda}),$$ 
such that for every $\hat{v}\in \widehat{\mathcal{M}}^{0}_{\lambda_{0}}(x_{+}^{\lambda_{0}},x^{\Lambda_{0}})$, if $l_{n} \in  \widehat{\mathcal{M}}^{0}_{\Lambda}(x^{\Lambda}_{-}, x^{\Lambda})$ is a sequence converging to $(\hat{u},\hat{v})$, then $l_{n} \in Im(\mathcal{G}_{-})$ for $n$ big enough. 
\end{theorem}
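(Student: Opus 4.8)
The plan is to transplant the degenerate gluing analysis of Lee \cite{Le1}, \cite{Le2}, carried out there for Floer trajectories between non-degenerate Hamiltonian orbits, to the present setting of $\mathbf{J}_{\lambda}$-holomorphic strips with the moving Lagrangian boundary condition on $(W,\psi_{\lambda}(W))$. The geometric picture is that the handle-slide $u$ is a solution at the fixed time $\lambda_{0}$ that is \emph{rigid} inside the one-parameter moduli space $\widehat{\mathcal{M}}^{0}_{\Lambda}(x^{\Lambda}_{-},x^{\Lambda}_{+})$ but \emph{not regular} at fixed $\lambda_{0}$: the linearisation $D_{u}$ of $\bar{\partial}_{\mathbf{J}_{\lambda_{0}}}$ at $u$, acting on the exponentially weighted Sobolev space $W^{1,p}_{\delta}(u^{*}TM)$ with the appropriate Lagrangian boundary condition, has a one-dimensional cokernel spanned by some $\eta_{0}\in L^{p}_{\delta}$, together with the one-dimensional kernel generated by translation. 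That $(\lambda_{0},u)$ occurs along a regular homotopy is precisely the statement that the augmented operator
\[
\widehat{D}_{u}\colon W^{1,p}_{\delta}\oplus\mathbb{R}\longrightarrow L^{p}_{\delta},\qquad (\xi,\mu)\longmapsto D_{u}\xi+\mu\,\partial_{\lambda}\!\left(\bar{\partial}_{\mathbf{J}_{\lambda}}u\right)\big|_{\lambda=\lambda_{0}},
\]
is surjective, i.e.\ the $\lambda$-variation pairs non-trivially with $\eta_{0}$. The companion strip $\hat{v}\in\widehat{\mathcal{M}}^{0}_{\lambda_{0}}(x_{+}^{\lambda_{0}},x^{\lambda_{0}})$ is genuinely regular, $\mu(x_{+}^{\lambda_{0}},x^{\lambda_{0}})=1$, so $D_{v}$ is surjective with one-dimensional kernel. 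The $\mathbb{Z}(\pi_{1}W)$-weights are inert in this argument: they contribute only through $[\Theta(u\#_{\rho}v)]=[\Theta(u)][\Theta(v)]$, which follows from concatenating the boundary paths of the glued strip.

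First I would carry out the pre-gluing: for large $\rho$, splice the strip $u$ (shifted to sit on the left) to $v$ (shifted to sit on the right) across a neck of length $\sim 2\rho$ using standard cutoff functions, producing an approximate solution $w_{\rho}$ with $\|\bar{\partial}_{\mathbf{J}_{\lambda_{0}}}w_{\rho}\|_{L^{p}_{\delta}}$ decaying exponentially in $\rho$; this uses the exponential convergence of $u$ and $v$ to the transverse intersection point $x_{+}^{\lambda_{0}}$. The central estimate is then that the augmented pre-glued operator $\widehat{D}_{w_{\rho}}=\big(D_{w_{\rho}},\,\partial_{\lambda}(\bar{\partial}_{\mathbf{J}_{\lambda}}w_{\rho})|_{\lambda_{0}}\big)$ admits, for $\rho\geq\rho_{0}$, a right inverse $Q_{\rho}$ with norm bounded uniformly in $\rho$; one proves this by the usual approximation-and-contradiction scheme, showing that a hypothetical sequence of approximate cokernel elements of unit norm would, as $\rho\to\infty$, concentrate on one of the two pieces and converge to either a nonzero element of $\mathrm{coker}\,D_{u}$ annihilating the $\lambda$-variation $\partial_{\lambda}(\bar{\partial}_{\mathbf{J}_{\lambda}}u)|_{\lambda_{0}}$, or a nonzero element of $\mathrm{coker}\,D_{v}$ --- contradicting surjectivity of $\widehat{D}_{u}$, respectively of $D_{v}$, so the finite-dimensional $\lambda$-parameter is exactly what absorbs the one-dimensional obstruction of $D_{u}$. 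Together with the uniform quadratic estimate for the nonlinearity, a Newton--Picard iteration (contraction mapping on a small ball in $W^{1,p}_{\delta}\oplus\mathbb{R}$) yields, for each $\rho\geq\rho_{0}$, a unique small $(\xi_{\rho},\mu_{\rho})$ such that $\exp_{w_{\rho}}(\xi_{\rho})$ is a genuine $\mathbf{J}_{\lambda_{0}+\mu_{\rho}}$-holomorphic strip from $x_{-}^{\lambda_{0}+\mu_{\rho}}$ to $x^{\lambda_{0}+\mu_{\rho}}$, with $\mu_{\rho}\to 0$ as $\rho\to\infty$. Passing to the translation quotient and recording the pair (parameter value $\lambda_{0}+\mu_{\rho}$, class of the glued strip) defines $\mathcal{G}_{-}(\hat{u},\hat{v},\rho)\in\widehat{\mathcal{M}}^{0}_{\Lambda}(x_{-}^{\Lambda},x^{\Lambda})$; continuity in $(\hat{v},\rho)$ is immediate from the uniqueness in the contraction argument.

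It then remains to prove the surjectivity-near-the-broken-limit clause: if $l_{n}=(\lambda_{n},\hat{w}_{n})$ converges to $(\hat{u},\hat{v})$ in the Gromov sense, then $l_{n}\in\mathrm{Im}(\mathcal{G}_{-})$ for $n$ large. Gromov convergence to the broken configuration forces $\lambda_{n}\to\lambda_{0}$ and forces $w_{n}$, suitably reparametrised, to be $C^{\infty}_{\mathrm{loc}}$-close to $u$ on the left and to $v$ on the right, with a long neck mapping into a small ball around $x_{+}^{\lambda_{0}}$; exactness of $W$, together with the confinement of finite-energy strips to a fixed compact set by the open mapping theorem already used in the paper, provides the uniform energy bounds and exponential neck decay needed here. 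Hence for $n$ large $w_{n}$ lies in the small $W^{1,p}_{\delta}$-neighbourhood of $w_{\rho_{n}}$ for a suitable $\rho_{n}\to\infty$, and by the uniqueness half of the implicit function argument $\hat{w}_{n}=\mathcal{G}_{-}(\hat{u},\hat{v},\rho_{n})$.

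The hard part, as indicated, is the uniform invertibility of $\widehat{D}_{w_{\rho}}$ as $\rho\to\infty$: one must control how the cokernel of the pre-glued operator distributes over the two pieces and verify that the $\lambda$-variation continues to kill the limiting obstruction of $D_{u}$ uniformly. This is exactly Lee's degenerate-gluing estimate, and moving it from periodic Hamiltonian orbits to moving Lagrangian boundary conditions requires rechecking the weighted Fredholm theory and the exponential decay estimates, but introduces no new analytic phenomenon, since the intersection point $x_{+}^{\lambda_{0}}$ at which breaking occurs is transverse.
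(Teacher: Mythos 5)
Your proposal follows essentially the same route as the paper: pre-gluing $u$ to $v$ along a long neck at the transverse point $x_{+}^{\lambda_{0}}$, establishing a uniformly bounded right inverse for the $\lambda$-augmented linearization (so that the one-dimensional parameter absorbs the one-dimensional cokernel of $D_{u}$) via a concentration/contradiction argument, solving by Newton--Picard contraction, and deducing the ``surjectivity'' clause from the uniqueness in that contraction. The only cosmetic difference is that the paper first converts strips to Hamiltonian half-tubes by naturality of the Floer equation before running Lee's argument, which does not change the substance.
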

An analogous statement holds for $\overline{\mathcal{\hat{M}}^{0}}_{\Lambda}(x^{\Lambda}, x_{+}^{\Lambda})$, when $x^{\lambda}$ satisfies $\mu(x^{\lambda}, x_{-}^{\lambda}) = 1$ for any $\lambda \in \Lambda$.
\begin{remark}
A stronger version of Theorem 4.4 appears already as Proposition 4.2 in Floer's article \cite{Fl}, where he also suggested the proof. 

Lee gives a proof of Theorem 4.4 in \cite[Theorem 7.5]{Le0},for the Floer theory of Hamiltonian periodic orbits. 
\end{remark}
We will present Lee's proof adapted to the Lagrangian case. We use the naturality of Floer's equation, which leads to the identification of the moduli space of Floer strips with the moduli space of Floer half-tubes.

To a given $x\in W\cap \psi^{H}_{1}(W)$ associate an orbit of the Hamiltonian vector field, $\gamma_{x}(t):=(\psi_{t}^{H})^{-1}(x)$. 
In the same way a Floer's strip is transformed into a Floer's half-tube: 
\begin{align*}
\mathcal{M}(x,y)&\rightarrow\mathcal{M}(\gamma_{x}(t), \gamma_{y}(t))\\
v(s,t) &\mapsto (\psi_{t}^{H})^{-1}(v(s,t)),
\end{align*}
where
$\tilde{v}(s, t) = (\psi_{t}^{H})^{-1}(v(s,t))$ satisfies the Floer perturbed equation:
$$ \dfrac{\partial\tilde{v}}{\partial s}  + (\psi_{t}^{H})^{*}J_{t}(\tilde{v})(\dfrac{\partial\tilde{v}}{\partial t} + X_{H_{t}}(\tilde{v}))= 0,$$ an equation of the form:
\begin{equation}
\dfrac{\partial w}{\partial s}  + J_{t}(w)(\dfrac{\partial w}{\partial t} - X_{H_{t}}(w))= 0.
\end{equation}

With the above identification, we rewrite Theorem \ref{Gluing} as:
\begin{theorem}
Suppose there is a handle-slide $(\lambda_{0}, u) \in \mathcal{M}^{0}(\gamma_{-}^{\Lambda}, \gamma_{+}^{\Lambda})$ ($\mu(\gamma_{-}^{\lambda_{0}}, \gamma_{+}^{\lambda_{0}})=0$), for some $\lambda_{0} \in \Lambda = [0, 1]$.
Then for every orbit $\gamma^{\lambda_{0}}$ with $\mu(\gamma_{-}^{\lambda_{0}}, \gamma^{\lambda_{0}}) = 1$, there is $\rho_{0} >0$ and a continuous map:
$$\mathcal{G}_{-}:\{\hat{u}\} \times \mathcal{\hat{M}}^{0}_{0}(\gamma_{+}, \gamma)\times [\rho_{0}, \infty) \rightarrow \mathcal{\hat{M}}^{0}_{\Lambda}( \gamma_{-}^{\Lambda}, \gamma^{\Lambda})$$ 
such that for every $\hat{v}\in \mathcal{\hat{M}}^{0}_{\lambda_{0}}(\gamma_{+}^{\lambda_{0}}, \gamma^{\lambda_{0}})$,  if $l_{n} \in \mathcal{\hat{M}}^{0}_{\Lambda}( \gamma_{-}^{\Lambda}, \gamma^{\lambda})$ is a sequence converging to $(\hat{u},\hat{v})$, then $l_{n}\in Im(\mathcal{G}_{-})$ for $n$ big enough. 

The analogous statement holds for $\mathcal{\hat{M}}^{0}_{\Lambda}( \gamma^{\Lambda}, \gamma_{+}^{\Lambda})$, where $\gamma$ satisfies $\mu(\gamma, \gamma_{+})= 1$.
\end{theorem}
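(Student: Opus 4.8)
The plan is to adapt Lee's degenerate gluing analysis \cite[Theorem 7.5]{Le0} (whose outline goes back to \cite[Proposition 4.2]{Fl}) to the present Lagrangian boundary-value problem, working throughout in the half-tube model, for which the perturbed Floer equation carries the explicit Hamiltonian term and the asymptotic ends $\gamma_-^{\lambda_0},\gamma_+^{\lambda_0},\gamma^{\lambda_0}$ are genuine orbits to which the standard exponential-decay estimates apply. Fix once and for all a representative $u$ of the handle-slide at $\lambda=\lambda_0$ with $\mu(\gamma_-^{\lambda_0},\gamma_+^{\lambda_0})=0$, and a representative $v$ of an element of $\widehat{\mathcal M}^0_{\lambda_0}(\gamma_+^{\lambda_0},\gamma^{\lambda_0})$ with $\mu(\gamma_+^{\lambda_0},\gamma^{\lambda_0})=1$; the two share the end $\gamma_+^{\lambda_0}$. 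The strategy is the usual pregluing followed by a quantitative implicit function theorem, the one new feature being that the linearized operator at $u$ is not surjective, so the Newton iteration has to be run on an \emph{extended} operator that also differentiates the homotopy parameter $\lambda$.

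\emph{Pregluing and the linearized operator.} For $\rho$ large I would splice $u$ and $v$ along the common end $\gamma_+^{\lambda_0}$ using cutoff functions supported in the neck, obtaining an approximate solution $w_\rho=u\#_\rho v$ of the $\lambda_0$-Floer equation with $\|\bar\partial_{\mathbf J_{\lambda_0}}w_\rho\|_{L^p}\le C e^{-\delta\rho}$, the decay coming from the exponential convergence of $u$ and $v$ to $\gamma_+^{\lambda_0}$. The linearization $D_{w_\rho}$ of the Floer operator at $w_\rho$ and fixed $\lambda=\lambda_0$ is Fredholm but fails to be uniformly surjective, precisely because $D_u$ carries a one-dimensional cokernel; its approximate cokernel is one-dimensional, spanned by a section $\xi_\rho$ obtained by transplanting the cokernel generator of $D_u$ through the pregluing. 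The key input is that the handle-slide is a \emph{transversely cut-out} point of the one-parameter moduli space $\mathcal M^0_\Lambda(\gamma_-^\Lambda,\gamma_+^\Lambda)$, guaranteed by genericity of the homotopy $\{\mathcal D_\lambda\}$: this says exactly that the derivative of the family of Floer equations in the $\lambda$-direction, evaluated at $(\lambda_0,w_\rho)$, has $L^2$-pairing with $\xi_\rho$ bounded away from $0$ uniformly in $\rho$. Hence the extended operator $(D_{w_\rho},\,\partial_\lambda)$ is surjective and admits a right inverse whose operator norm is bounded independently of $\rho$.

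\emph{Implicit function theorem and the gluing map.} With the uniform right inverse and the quadratic estimate for $\bar\partial$ in hand, the Picard--Newton iteration applies: for every $\rho\ge\rho_0$ there is a unique small pair $(\lambda(\rho),\zeta_\rho)$, with $\|\zeta_\rho\|$ and $|\lambda(\rho)-\lambda_0|$ controlled by $\|\bar\partial w_\rho\|\le Ce^{-\delta\rho}$, such that $\exp_{w_\rho}\zeta_\rho$ solves the $\lambda(\rho)$-Floer equation. Passing to the $\mathbb R$-reparametrization class and recording the dependence on $(\hat u,\hat v)$ defines the continuous map $\mathcal G_-$ on $\{\hat u\}\times\widehat{\mathcal M}^0_{\lambda_0}(\gamma_+,\gamma)\times[\rho_0,\infty)$ with values in $\widehat{\mathcal M}^0_\Lambda(\gamma_-^\Lambda,\gamma^\Lambda)$; continuity and $|\lambda(\rho)-\lambda_0|\to 0$ as $\rho\to\infty$ follow from the uniform constants. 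One checks directly that the glued half-tube is homotopic rel ends to the concatenation $u\#v$, so $[\Theta(\mathcal G_-(\hat u,\hat v,\rho))]=[\Theta(u)][\Theta(v)]$ in $\pi_1 W$, which is what is needed when the change of differential is later read off from the boundary relations (12)--(13). For the surjectivity-near-breaking clause, suppose $l_n\to(\hat u,\hat v)$: Gromov compactness and elliptic bootstrapping force $l_n$ to be $C^\infty_{\mathrm{loc}}$-close to $u$ on one side and to $v$ on the other, hence $C^1$-close to some $w_{\rho_n}$ with $\rho_n\to\infty$, and the uniqueness part of the implicit function theorem then puts $l_n\in\mathrm{Im}(\mathcal G_-)$ for $n$ large. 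The analogous statement for $\widehat{\mathcal M}^0_\Lambda(\gamma^\Lambda,\gamma_+^\Lambda)$ is proved identically after exchanging the incoming and outgoing ends.

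\emph{Main obstacle.} The delicate point is the degenerate gluing itself: producing a right inverse of the extended operator $(D_{w_\rho},\partial_\lambda)$ with norm uniform in $\rho$, in the presence of the one-dimensional cokernel of $D_u$. This requires a careful description of the approximate cokernel $\xi_\rho$ together with a proof that the pregluing matches it to the genuine cokernel of $D_u$ up to errors decaying in $\rho$; the non-degeneracy of the pairing $\langle\partial_\lambda(\bar\partial),\xi_\rho\rangle$, which is where the genericity of $\{\mathcal D_\lambda\}$ is used essentially; and the usual neck-stretching estimates confining the discrepancy between $w_\rho$ and the pair $(u,v)$ to the neck region. This is exactly the content of \cite[Theorem 7.5]{Le0}, and transcribing it to the Lagrangian setting through the naturality identification of strips with half-tubes set up above is the technical heart of the argument.
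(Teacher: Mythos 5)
Your proposal is correct and follows essentially the same route as the paper's proof (itself an adaptation of Lee's argument): pregluing along the shared end $\gamma_+^{\lambda_0}$, passing to the $\lambda$-extended linearization to absorb the one-dimensional cokernel of $D_u$, a uniformly bounded right inverse feeding the Newton--Picard iteration, and surjectivity-near-breaking via the uniqueness clause of the contraction lemma. The only difference is cosmetic: you justify surjectivity of the extended operator by the nondegenerate pairing of $\partial_\lambda\bar\partial$ with the transplanted cokernel element, whereas the paper uses the index computation together with $\mathrm{Ker}(D_u)\oplus\mathrm{Ker}(D_v)\cong\mathrm{Ker}(D_{w_\chi})$, and then obtains the uniform bound on the right inverse by a separate contradiction/cut-off argument involving the integral kernel of $\partial_s+A_{\gamma_+}$.
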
  
The theorem is proven in three steps. 
\begin{itemize}
\item First step: \textit{Construction of a pre-gluing map.}
A point $(\lambda, w) \in\hat{\mathcal{M}}^{0}_{\Lambda}( \gamma_{-}^{\Lambda}, \gamma^{\Lambda})$ is a solution of the equation (14) at time $\lambda$. In this step an approximated solution $(\lambda_{0}, w_{\chi})$ depending on the data $\chi=(u,v,\rho)$ is defined.  The map assigning to each $\chi$ a pair $(\lambda_{0}, w_{\chi})$ is call the pre-gluing map.

The map $w_{\chi}$ does not satisfy equation (14), but it is close to a real solution since $\Vert \bar{\partial}_{\mathbf{J}_{\lambda_{0}}, H^{\lambda}}(w_{\chi})\Vert_{p} < \epsilon(\rho)$ is sufficiently small.  
\item Second step: \textit{Construction of $\mathcal{G}_{-}$.}
In this step we use the Newton-Picard method stated in Lemma \ref{Contraction}, to construct $\mathcal{G}_{-}$. The Newton-Picard method is applied to the operator $\bar{\partial}_{\mathbf{J}^{\Lambda}, H^{\Lambda}}$. Starting with a pre-glued solution $(\lambda_{0}, w_{\chi})$; the method gives us a honest solution. In order to apply this method the following is needed:
\begin{enumerate}
\item The existence of a uniformly bounded right inverse of the linearization of the operator 
\begin{equation}
\label{operatopr}
\mathcal{F} = \bar{\partial}_{\mathbf{J}^{\Lambda}, H^{\Lambda}}.
\end{equation}
Denote by $D_{(\lambda_{0}, w_{\chi})}$ the linearization of $\mathcal{F}$ at $(\lambda_{0}, w_{\chi})$.

In this step it is shown that $D_{(\lambda_{0}, w_{\chi})}$ has a right inverse, denoted by $P_{\chi}$, and that this right inverse is uniformly bounded independently of $\chi$. 
 
\item Obtaining a quadratic bound on the non-linear part of $\mathcal{F}$.

Near $(\lambda_{0}, w_{\chi})$, the operator $\mathcal{F}$ can be decomposed into linear part and a non-linear part. Let $(\lambda, w) = \tilde{\exp}_{(\lambda_{0}, w_{\chi})}((\lambda,\xi))$,
\begin{equation}
\mathcal{F}(\lambda, w) = \mathcal{F}((\lambda_{0}, w_{\chi})) + D_{(\lambda_{0}, w_{\chi})}(\lambda,\xi) + N_{(\lambda_{0}, w_{\chi})}(\lambda,\xi).
\end{equation}
If $(\lambda, \xi) = P_{\chi} \eta_{\chi}$, then a solution of $\mathcal{F}(\lambda, w) = 0$ is obtained as the fixed point of
\begin{equation}
\eta_{\chi} = -N(P_{\chi}(\eta_{\chi}))- \mathcal{F}(\lambda, w_{\chi}).
\end{equation} 
The existence of such a fixed point is guaranteed by the contraction mapping theorem which implies the following Lemma:
\begin{lemma}[1.2.1 in \cite{Le2}] \label{Contraction}
Let $C_{P}$ be the upper bound on $\Vert P_{\chi} \Vert$ as above, and suppose that there is a $\rho$-independent constant $k$ such that
\begin{align*}
\Vert \mathcal{F}((\lambda, w)) \Vert_{p} \leq \dfrac{1}{10kC^{2}_{P}}&\\
\forall (\lambda_{1}, \xi_{1}), (\lambda_{2},\xi_{2}), \hspace{0.2cm} \Vert N_{\chi}((\lambda_{1}, \xi_{1})) &- N_{\chi}((\lambda_{2},\xi_{2}))\Vert_{p} \leq \\
k ( \Vert (\lambda_{1}, \xi_{1})\Vert_{1,p} + \Vert &(\lambda_{2},\xi_{2})\Vert_{1,p} ) \Vert (\lambda_{1}, \xi_{1}) - (\lambda_{2},\xi_{2})\Vert_{1,p}.
\end{align*}

Then there exists a unique $\eta_{\chi}$ with $\Vert\eta_{\chi} \Vert_{p} \leq \frac{1}{5kC^{2}_{P}}$ solving (3.2.7). Moreover,
the solution $\eta_{\chi}$ varies smoothly with $\rho$, and $\Vert\eta_{\chi} \Vert_{p} \leq 2\Vert\mathcal{F}((\lambda, w_{\chi}))\Vert_{p}.$
\end{lemma}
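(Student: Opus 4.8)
The plan is to realise $\eta_\chi$ as the unique fixed point of the map
\[
T_\rho(\eta) := -N\bigl(P_\chi(\eta)\bigr) - \mathcal{F}\bigl((\lambda_0,w_\chi)\bigr)
\]
on a small closed $L^p$-ball, and then to read off the two quantitative bounds and the smoothness in $\rho$ directly from that construction. Indeed, if $\eta$ solves $\eta=T_\rho(\eta)$ then, setting $(\lambda,\xi)=P_\chi\eta$ and combining $D_{(\lambda_0,w_\chi)}P_\chi=\mathrm{id}$ with the decomposition of $\mathcal{F}$ into its value at $(\lambda_0,w_\chi)$, its linear part, and its nonlinear part $N$, one obtains $\mathcal{F}\bigl(\tilde{\exp}_{(\lambda_0,w_\chi)}(\lambda,\xi)\bigr)=0$; so everything reduces to the Banach fixed point theorem for $T_\rho$, and the two hypotheses are calibrated precisely so that the ball $B:=\{\eta : \Vert\eta\Vert_p\le \tfrac{1}{5kC_P^2}\}$ works. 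First I would verify that $T_\rho$ maps $B$ into itself and contracts it.

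\emph{Self-map.} Since $N_\chi(0)=0$, the hypothesised Lipschitz estimate for $N_\chi$ specialises to the quadratic bound $\Vert N_\chi(\lambda,\xi)\Vert_p\le k\Vert(\lambda,\xi)\Vert_{1,p}^2$. For $\eta\in B$ one has $\Vert P_\chi\eta\Vert_{1,p}\le C_P\Vert\eta\Vert_p\le \tfrac{1}{5kC_P}$, hence $\Vert N(P_\chi\eta)\Vert_p\le \tfrac{1}{25kC_P^2}$; adding the hypothesis $\Vert\mathcal{F}((\lambda_0,w_\chi))\Vert_p\le \tfrac{1}{10kC_P^2}$ gives $\Vert T_\rho(\eta)\Vert_p\le \tfrac{7}{50kC_P^2}<\tfrac{1}{5kC_P^2}$, so $T_\rho(B)\subset B$. \emph{Contraction.} For $\eta_1,\eta_2\in B$ the Lipschitz estimate yields $\Vert T_\rho(\eta_1)-T_\rho(\eta_2)\Vert_p\le k\bigl(\tfrac{2}{5kC_P}\bigr)C_P\Vert\eta_1-\eta_2\Vert_p=\tfrac25\Vert\eta_1-\eta_2\Vert_p$, a contraction with factor $\tfrac25<1$. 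Hence $T_\rho$ has a unique fixed point $\eta_\chi\in B$, which gives existence, uniqueness, and the crude bound $\Vert\eta_\chi\Vert_p\le \tfrac{1}{5kC_P^2}$.

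The sharper bound follows by feeding $\eta_\chi$ back into $\eta_\chi=T_\rho(\eta_\chi)$: from the quadratic estimate and $\Vert\eta_\chi\Vert_p\le\tfrac{1}{5kC_P^2}$ we get $\Vert N(P_\chi\eta_\chi)\Vert_p\le kC_P^2\Vert\eta_\chi\Vert_p^2\le\tfrac15\Vert\eta_\chi\Vert_p$, whence $\Vert\eta_\chi\Vert_p\le\tfrac15\Vert\eta_\chi\Vert_p+\Vert\mathcal{F}((\lambda_0,w_\chi))\Vert_p$, i.e. $\Vert\eta_\chi\Vert_p\le\tfrac54\Vert\mathcal{F}((\lambda_0,w_\chi))\Vert_p\le 2\Vert\mathcal{F}((\lambda_0,w_\chi))\Vert_p$. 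For the smoothness in $\rho$: the family $T_\rho$ depends smoothly on $\rho$ (the pre-glued datum $\chi=(u,v,\rho)$, the right inverse $P_\chi$, and the smooth maps $N$ and $\mathcal{F}$ all do), and at the fixed point $\mathrm{id}-DT_\rho$ is invertible because $\Vert DT_\rho\Vert\le\tfrac25<1$ on $B$; the implicit function theorem applied to $(\eta,\rho)\mapsto\eta-T_\rho(\eta)$ then shows $\rho\mapsto\eta_\chi$ is smooth.

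I do not expect a genuine difficulty inside this lemma itself: it is the standard Newton--Picard bookkeeping, entirely driven by its two hypotheses. The real work — and the main obstacle, though it lies in the surrounding gluing construction rather than here — is to verify those hypotheses \emph{uniformly in} $\rho\in[\rho_0,\infty)$: one needs a right inverse $P_\chi$ of $D_{(\lambda_0,w_\chi)}$ with a $\rho$-independent operator norm $C_P$, and the quadratic estimate on $N_\chi$ with a $\rho$-independent constant $k$. The delicate point there is that at a handle-slide the operator $D$ at the degenerate strip has one-dimensional cokernel, which must be absorbed by the extra $\lambda$-direction when constructing $P_\chi$.
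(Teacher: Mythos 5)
Your proof is correct and is exactly the argument the paper intends: the paper does not prove this lemma itself but quotes it from Lee and explicitly attributes it to the contraction mapping theorem applied to the Newton--Picard iteration $\eta\mapsto -N(P_\chi\eta)-\mathcal{F}((\lambda_0,w_\chi))$, which is precisely your map $T_\rho$. Your arithmetic for the self-map constant $\tfrac{7}{50kC_P^2}$, the contraction factor $\tfrac25$, the refined bound $\tfrac54\Vert\mathcal{F}\Vert_p\le 2\Vert\mathcal{F}\Vert_p$, and the implicit-function-theorem argument for smoothness in $\rho$ all check out.
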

The gluing map is defined by $\chi\mapsto \chi + P_{\chi}(\eta_{\chi})$. 
\end{enumerate}

\item Third step: \textit{Show the "surjectivity" of the gluing map:}
In this step we show that if $l_{n} \in \hat{\mathcal{M}}^{1}_{\Lambda}( \gamma_{-}, \gamma)$ is a sequence converging to $(\hat{u},\hat{v})$, then for all $n$ large enough, we have $l_{n}\in Im(\mathcal{G}_{-})$. 
\end{itemize}
Let $\gamma_{-}^{\Lambda} = (\gamma_{-}^{\lambda})_{\lambda \in \Lambda}$ and $\gamma_{+}^{\Lambda} = (\gamma_{+}^{\lambda})_{\lambda \in \Lambda}$, with $\gamma_{-}^{\lambda},\gamma_{+}^{\lambda} \in \mathcal{O}(H_{\lambda})$ for each $\lambda \in \Lambda$;  $$\mathcal{C}^{\infty}_{\Lambda}( \gamma_{-}^{\Lambda}, \gamma_{+}^{\Lambda}):=\{ (\lambda, u) \hspace{0.2cm}\vert \hspace{0.2cm} \lambda \in \Lambda,\hspace{0.2cm} u \in C^{\infty}(\mathbb{R}\times [0, 1], \tilde{M}),\hspace{0.2cm} \lim\limits_{s \rightarrow \mp  \infty} u(s,t) = \gamma_{\mp}^{\lambda}(t)\}.$$

For a vector bundle $\pi:E \rightarrow \mathbb{R}\times [0, 1]$ denote by  $W^{1,p}(E)$ the space of  $W^{1,p}$-sections $s:\mathbb{R}\times [0, 1] \rightarrow E$.
For $w \in C^{0}(\mathbb{R}\times [0, 1], \tilde{M})$ with boundary condition on a Lagrangian submanifold $W \subset \tilde{M}$, let $W^{1,p}_{L}(w^{*}(T\tilde{M})) := \{\xi \in W^{1,p}(w^{*}(T\tilde{M}))\hspace{0.2cm}\vert \hspace{0.2cm} \xi(s,i) \in T_{w(s,i)}W \hspace{0.2cm} i = 0,1  \}$ 
We now proceed to the proof:
\begin{proof}
\textbf{Step 1.} Construction of pre-gluing map.

Let $\epsilon \geq 0$ and $\beta^{-}, \beta^{+}:\mathbb{R}\rightarrow[0,1]$ be two smooth functions,
\begin{center}
$\beta^{-}(s)=\begin{cases} 
      1 & s\leq -1 \\
      0 & s\geq -\epsilon 
   \end{cases}$\hspace{0.5cm} 
   and\hspace{0.5cm} $\beta^{+}(s)=\begin{cases} 
      1 & s \geq 1\\ 0 & s\leq \epsilon
   \end{cases}.$ 
\end{center}
Pick $\rho_{0}\in \mathbb{R}^{+}$ large enough and let $\exp$ be the exponential map associated to a Riemmanian metric $\omega(-, \mathbf{J}-)$ for which $W$ is totally geodesic. Suppose that for $\vert s \vert \leq 1$ we have $u(s+\rho, t), v(s-\rho, t) \in \text{Im}(\exp_{\gamma_{+}})$. The pre-gluing map is defined as follows:
\begin{align*}
\#:\{u\}\times \mathcal{M}^{1}_{0}( \gamma_{+},\gamma ) \times [\rho_{0}, \infty) &\rightarrow \mathcal{C}^{\infty}_{\Lambda}( \gamma_{-}^{\Lambda}, \gamma^{\Lambda}),\\
 \chi = (u,v,\rho)&\mapsto (\lambda_{0}, w_{\chi}= u\#_{\rho}v),
\end{align*}
where $w_{\chi}:\mathbb{R}\times [0,1]\rightarrow M$ is given by
\begin{equation}
w_{\chi}(s,t) = \begin{cases} u(s+\rho, t)  & s \leq -1\\
\exp_{\gamma_{+}}(\beta^{-1}(s)\exp_{\gamma_{+}}^{-1}(u(s+\rho, t))\\ \hspace{1cm}+ \beta^{+}(s)\exp_{\gamma_{+}}^{-1}(v(s-\rho, t))) &  -1 \leq s \leq 1\\
v(s-\rho, t) & s \geq 1\end{cases}.
\end{equation}
The map $w_{\chi}(s,t)$, has the following properties:
\begin{itemize}
\item $\lim\limits _{\rho \rightarrow \infty} w_{\chi}(s-\rho,t)= u(s,t)$ in $\mathcal{C}^{\infty}_{loc}$,
\item $\lim\limits _{\rho \rightarrow \infty} w_{\chi}(s+\rho,t)= v(s,t)$ in $\mathcal{C}^{\infty}_{loc},$
\item $\lim\limits _{\rho \rightarrow \infty} w_{\chi}(s,t)= \gamma_{+}$ in $\mathcal{C}^{\infty}_{loc}.$
\end{itemize}
 In the same way we define the linearization of the pre-gluing map for $\rho$ fixed 
\begin{align*} 
\#_{\rho}:\text{Ker} D_{u}\times \text{Ker} D_{v} &\rightarrow W^{1,p}(w_{\chi}^{*}(T\tilde{M}))\\
\#_{\rho}(\xi_{0},\xi_{1})&\mapsto \xi_{0}\#_{\rho}\xi_{1}
\end{align*}
as follows:
$$\xi_{0}\#_{\rho}\xi_{1}(s,t) = \begin{cases} \xi_{0}(s+\rho, t)  & \\
D_{*}\text{exp}_{\gamma_{+}}(\beta^{-1}(s)D_{u(s+\rho, t)}\text{exp}_{\gamma_{+}}^{-1}(\xi_{0}(s+\rho, t)) \\\hspace{1cm}+ \beta^{+}(s)D_{v(s-\rho, t)}\text{exp}_{\gamma_{+}}^{-1}(\xi_{1}(s-\rho, t))) & \\
\xi_{1}(s-\rho, t), & \end{cases}$$
where $s$ varies in the same intervals as in the non-linear case and $$* = \text{exp}_{\gamma_{+}}(\beta^{-1}(s)\text{exp}_{\gamma_{+}}^{-1}(u(s+\rho, t)) + \beta^{+}(s)\text{exp}_{\gamma_{+}}^{-1}(v(s-\rho, t))).$$
The section $\xi_{0}\#_{\rho}\xi_{1}$ has the following properties:
\begin{itemize}
\item $\lim\limits _{\rho \rightarrow \infty} \xi_{0}\#_{\rho}\xi_{1} = 0$ in $\mathcal{C}^{0}_{loc}$,
\item $\xi_{0}\#_{\rho}\xi_{1} = 0$ for $s\in [-\epsilon, \epsilon].$
\end{itemize}
The pregluing map satisfies
$\Vert \bar{\partial}_{\mathbf{J}_{\lambda_{0}}, H^{\lambda}}(w_{\chi})\Vert_{p} < \varepsilon(\rho)$.

\textbf{Step 2.} Construction of $\mathcal{G}_{-}$.
We first show the existence of a uniformly bounded right inverse for the linearization of the operator $\mathcal{F}$ (\ref{operatopr}) at the point $(\lambda_{0}, w_{\chi})$. In order to do this we recall some definitions and standard results about the operator $\mathcal{F}$. For $p > 2$ let
$$\mathcal{P}_{loc}^{1,p} = \{ w\in  W^{1,p}_{loc}(\mathbb{R}\times [0,1], \tilde{M})  \hspace{0.2cm}|\hspace{0.2cm} w(s,i) \in W \hspace{0.2cm} i=0,1 \}.$$
For a fixed $\lambda\in \Lambda$, the space
\smaller
\begin{align*}
\mathcal{P}_{\lambda}^{1,p}(\gamma^{\lambda}_{-},\gamma^{\lambda}) = \{ w\in \mathcal{P}_{loc}^{1,p} \hspace{0.2cm}|\hspace{0.2cm} \exists \rho > 0,&  \hspace{0.1cm} \exists \xi_{-} \in W^{1,p}_{loc}((\gamma^{\lambda}_{-})^{*}(T\tilde{M})), \hspace{0.1cm} \exists \xi_{+} \in W^{1,p}_{loc}((\gamma^{\lambda})^{*}(T\tilde{M})),\\ 
\text{ such that if } &\vert s\vert > \rho, \hspace{0.2cm} w(s,t) = \exp_{\gamma^{\lambda}_{-}}(\xi_{-}(s, t)),\hspace{0.1cm} w(s,t) = \exp_{\gamma^{\lambda}}(\xi_{+}(s, t)) \hspace{0.2cm}\}, 
\end{align*}\normalsize 
is an infinite dimensional Banach manifold (see \cite[Theorem 3]{Fl1}). 

Consider the vector bundle $\mathcal{E}\rightarrow \mathcal{P}^{1,p}_{\Lambda}(\gamma_{-},\gamma)$, where the base space is defined by $$\mathcal{P}^{1,p}_{\Lambda}(\gamma_{-},\gamma)= \{(\lambda, w) \hspace{0.2cm}|\hspace{0.2cm} w\in \mathcal{P}_{\lambda}^{1,p}(\gamma^{\lambda}_{-},\gamma^{\lambda})\}$$ and the fiber is given by $\mathcal{E}_{(\lambda,w)} = L^{p}(w^{*}(T\tilde{M}))$.
 
The perturbed Cauchy-Riemann operator defines a section of the following bundle, 
\begin{equation}
\bar{\partial}_{\mathbf{J}^{\Lambda},H^{\Lambda}}:\mathcal{P}^{1,p}_{\Lambda}(\gamma_{-},\gamma)\rightarrow \mathcal{E},
\end{equation}
given by 
\begin{equation}
\bar{\partial}_{\mathbf{J}^{\Lambda},H^{\Lambda}}(\lambda,w) = \bar{\partial}_{\mathbf{J}^{\lambda}, H^{\lambda}}(w).
\end{equation}
The space $\mathcal{P}^{1,p}_{\Lambda}(\gamma_{-},\gamma)$ is a Banach manifold with local charts modeled by $T_{\lambda}\Lambda \times W^{1,p}_{L}(w^{*}(T\tilde{M})),$ and local coordinates given by exponential map:
\begin{align*}
&\tilde{\exp}_{(\lambda, w)}(\mu, \xi) := (\lambda + \mu , \exp_{w_{\chi}}(\xi + \zeta(\mu, R, R'))), \\
&\text{where }\zeta(\mu, R, R') = \beta^{-}(R+ s)\exp^{-1}_{w}(\gamma_{-}^{\lambda + \mu}) + \beta^{+}(-R'+ s)\exp^{-1}_{w}(\gamma^{\lambda + \mu}),
\end{align*}
for two large enough positive constants $R, R'$. 

With the previous definitions we have that
$\#(u,v,\rho)=(\lambda_{0},w_{\chi}) \in \mathcal{P}^{1,p}_{\Lambda}(\gamma_{-},\gamma)$. Then, in a neighborhood of $(\lambda_{0},w_{\chi})$ the section has the local expression:
\begin{equation}
\mathcal{F}\circ \tilde{\exp}_{(\lambda_{0}, w_{\chi})}:T_{\lambda_{0}}\Lambda \times W^{1,p}_{L}(w_{\chi}^{*}(T\tilde{M}))\rightarrow L^{p}(w_{\chi}^{*}(T\tilde{M})).
\end{equation}
To apply Lemma \ref{Contraction} to the operator $\tilde{\mathcal{F}}_{(\lambda_{0}, w_{\chi})} := \mathcal{F}\circ \tilde{\exp}_{(\lambda_{0}, w_{\chi})}$, we first have to show the existence of a uniformly bounded right inverse for the linearization of the operator $\tilde{\mathcal{F}}_{(\lambda_{0}, w_{\chi})}$ at $(0,0)$.

Denoting the linearized operator by $D_{(\lambda_{0}, w_{\chi})}$, this operator depends on the choice of a connection and on the constants $R, R'$ used in the definition of the local coordinates. The linearized operator has the following expression: 
\begin{equation}
D_{(\lambda_{0}, w_{\chi})}(\mu,\xi) = D_{w_{\rho}}\xi  + \mu \tilde{X}_{w_{\chi}}.
\end{equation}
The first term is $D_{w_{\chi}} = D \bar{\partial}_{ \mathbf{J}^{ \lambda_{0} }, H^{ \lambda_{0} } }(w_{\chi}) = \nabla_{s} + J_{t}^{\lambda_{0} } \nabla_{t} + S^{ \lambda_{0} }(s,t)$, where $S^{ \lambda_{0} }(s,t)$ is a zero-order operator such that $\lim \limits_{s\pm \infty} S^{ \lambda_{0} }(s,t) = S^{\pm, \lambda_{0} }$. The operators $S^{\pm}$ are symmetric operators on the solutions of the perturbed Cauchy-Riemann equation. 

The second term $\mu \tilde{X}_{\lambda_{0}}$ is the component on the $\lambda$-direction and is a cut-off version of:
\begin{equation}\label{tilde X}
\nabla_{\lambda}\nabla^{t}H^{\lambda}(t, w) + \nabla_{\lambda} J^{ \lambda }_{t} ( \partial_{t} w - X_{ H^{\lambda}_{t} }(w)).
\end{equation}

The operator $D_{(\lambda_{0}, w_{\chi})}$ is a Fredholm operator (see Theorem 4 \cite{Fl1}), with index $\text{ind}(D_{(\lambda_{0}, w_{\chi})}) = \text{ind}(D_{w_{\chi}}) + 1 = 2$. 
To see that the operator $D_{(\lambda_{0}, w_{\chi})}$ has a right inverse, observe that $\text{Ker}(D_{u}) \oplus \text{Ker}(D_{v}) \cong \text{Ker}(D_{w_{\chi}})$ (a proof of this can be found in \cite[Lemma 1.2.4]{Le2}), then, $\text{dim}(\text{Ker}(D_{\chi})) = \text{dim}(\text{Ker}(D_{u})) + \text{ dim}(\text{Ker}(D_{v})) = 2$. This combined with the computation of the index implies that the operator $D_{(\lambda_{0}, w_{\chi})}$ is surjective, since it is Fredholm then it has a right inverse. Denote its right inverse by $P_{(\lambda_{0}, w_{\chi})}$.

To see that the operator $P_{(\lambda_{0}, w_{\chi})}$ is bounded, we have to show that there exist $C \in \mathbb{R^{+}}$ such that, for $\rho > \rho_{0}$,\begin{align*}
\forall (\mu,\xi)\in T_{(\lambda_{0}, w_{\chi})}\mathcal{P}^{1,p}_{\Lambda}(\gamma_{-}, \gamma) \hspace{0.2cm}&\text{with } \xi \in (\text{Ker}D_{w_{\chi}})^{\perp}, \hspace{0.2cm}\\ C\Vert(\mu,\xi)\Vert_{W^{1,p}} &\leq  \Vert D_{(\lambda_{0}, w_{\chi})}(\mu,\xi)\Vert_{L^{p}}.
\end{align*}
We proceed by contradiction. Assume that the inequality above is not true. Then there exist a sequence $\{\rho_{n}\}_{n \geq N_{0}}$ with $\rho_{n}\rightarrow\infty$ and there exist $(\mu_{n},\xi_{n}) \in T_{(\lambda_{0}, w_{\chi_{n}})}\mathcal{P}^{1,p}_{\Lambda}(\gamma_{-},\gamma)$ such that
\begin{equation}
\Vert(\mu_{n},\xi_{n})\Vert_{W^{1,p}} = 1, \hspace{0.2cm} \text{ and }, \hspace{0.2cm}
\lim\limits_{n\rightarrow\infty} \Vert D_{(\lambda_{0}, w_{\chi})}(\mu_{n},\xi_{n})\Vert_{L^{p}} = 0.
\end{equation}

Let $\alpha(s):\mathbb{R}\rightarrow [0,1]$ be a smooth bump function such that $\alpha(s)=1$ if $|s|<\dfrac{1}{2}$ and $\alpha(s) = 0$ if $|s| > 1$. 
Consider $\alpha_{n}(s) = \alpha\big(\dfrac{s}{r_{n}}\big)=\alpha \big(\dfrac{2s}{\rho_{n}}\big)$ and 
the operator
\begin{align*}
D :T_{\lambda_{0}}\Lambda \times W^{1,p}_{L}(\tilde{\gamma}_{+}^{*}(T\tilde{M}))&\rightarrow L^{p}(\tilde{\gamma}_{+}^{*}(T\tilde{M})),\\
(\mu, \xi) &\mapsto \dfrac{\partial \xi}{\partial s} + A_{\gamma_{+}}\xi + \mu \nabla_{\lambda} \nabla H_{\lambda}(\lambda_{0}, \gamma_{+}),
\end{align*}
where $\tilde{\gamma}_{+}(s,t) = \gamma_{+}(t)$ and $A_{\gamma_{+}}$ is the linearization at $\gamma_{+}$ of the Hamiltonian flow equation
\begin{equation}
J_{t}(\gamma(t))\dfrac{d\gamma}{dt} + \nabla^{g^{t}}H_{t}(\gamma(t)).
\end{equation}
 The following estimates show that $D((\mu_{n},\alpha \xi_{n}))$ converges to zero in $L^{p}$-norm:
 \smaller
\begin{align}
\Vert &D(\alpha_{n}(\mu_{n},\xi_{n}))\Vert_{p} = \Vert \dfrac{\dot{\alpha}_{n}}{r_{n}}\xi_{n} + \alpha_{n} D(\alpha_{n}(\mu_{n},\xi_{n})) \Vert_{p}\\ =&
 \Vert \dfrac{\dot{\alpha}_{n}}{r_{n}}\xi_{n} + \alpha_{n} D((\mu_{n},\xi_{n})) -  \alpha_{n} D_{(\lambda_{0}, w_{\chi_{n}})}((\mu_{n},\xi_{n})) + \alpha_{n} D_{(\lambda_{0}, w_{\chi_{n}})}((\mu_{n}, \xi_{n}))\Vert_{p}\\
\leq & |\dfrac{\dot{\alpha}_{n}}{r_{n}}|\Vert \xi_{n}\Vert_{p} +\Vert \alpha_{n} D((\mu_{n},\xi_{n})) - \alpha_{n} D_{(\lambda_{0}, w_{\chi_{n}})}((\mu_{n},\xi_{n}))\Vert_{p} + \Vert\alpha_{n}D_{(\lambda_{0}, w_{\chi_{n}})}((\mu_{n}, \xi_{n}))\Vert_{p}\\
\leq & \dfrac{M}{r_{n}}\Vert \xi_{n}\Vert + \Vert (D - D_{(\lambda_{0}, w_{\chi_{n}})})((\mu_{n}, \xi_{n}))\Vert_{p,\Sigma_{n}}+ \Vert D_{(\lambda_{0}, w_{\chi_{n}})}((\mu_{n}, \xi_{n}))\Vert_{p,\Sigma_{n}}\\
\leq & \tau_{n} + \Vert (D - D_{(\lambda_{0}, w_{\chi_{n}})})((\mu_{n}, \xi_{n}))\Vert_{p,\Sigma_{n}},
\end{align}
\normalsize
where $M$ is the upper bound of $\dot{\alpha}$ on $\mathbb{R}$ and $\Sigma_{n}=([-r_{n}, r_{n}] \times [0,1])$. 

Notice that $\Vert \xi_{n}\Vert_{L^{p}} \leq \Vert \xi_{n}\Vert_{W^{1,p}} \leq 1$, and since
$\Vert D_{\chi_{n}}((\mu_{n}, \xi_{n}))\Vert_{L^{p}}\rightarrow 0$, this implies (3.2.20), for some $\tau_{n} \geq 0$ that goes to $0$ when $n$ goes to infinity. 

On the other hand $\Vert (D - D_{(\lambda_{0}, w_{\chi_{n}})})\Vert\rightarrow 0$ in $\mathcal{C}^{\infty}_{loc}$, which implies that $D(\alpha_{n}(\mu_{n},\xi_{n}))$ goes to zero in $L^{p}$-norm.

Notice now that $D$ is a Fredholm operator, hence there exist a subsequence of $\{\alpha_{n}(\mu_{n},\xi_{n})\}$ that converge to some element $(\mu_{\infty},\xi_{\infty})\in \text{Ker}(D)$ with $\Vert \xi_{\infty} \Vert_{1,p} + |\mu_{\infty}|$ bounded.

Consider the integral kernel $\mathcal{K}$\footnote{Given a differential operator $D$ on $\mathbb{R}$, the integral kernel of $D$ is the function $K(s,x)$ associated to the operator $\mathcal{K}(v) = \int\limits_{\mathbb{R}}K(s-x)v dx + C$ such that $\mathcal{K}(v)=u$ if $Du=v$.} of the operator $\dfrac{\partial}{\partial s} + A_{\gamma_{+}}$. Since the operator $A_{\gamma_{+}} = J_{t}(\tilde{\gamma})\dfrac{\partial}{\partial t} + S^{+}: W^{1,2}_{L}(\tilde{\gamma}_{+}^{*}(T\tilde{M}))\rightarrow L^{p}(\gamma_{+}^{*}(T\tilde{M}))$ is invertible and  $L^{2}$-self-adjoint, the integral kernel of the operator $\dfrac{\partial}{\partial s} + A_{\gamma_{+}}$ is given by the following expression:
\begin{equation}
\mathcal{K}(s)=\begin{cases} 
      e^{-A_{\gamma_{+}}^{+}s} p^{+} & s\geq 0 \\
      e^{A_{\gamma_{+}}^{-}(-s)}p^{-} & s < 0,
   \end{cases}
\end{equation}
where $p^{\pm}: L^{2}(\gamma_{+}^{*}(T\tilde{M}))\rightarrow L^{2}(\gamma_{+}^{*}(T\tilde{M}))$ are the orthogonal projections and $A^{\pm}_{\gamma_{+}}$ are the restrictions of $A_{\gamma_{+}}$ to the subspace of $L^{2}(\gamma_{+}^{*}(T\tilde{M}))$ generated by the positive or negative eigenvectors (this computation can be found in \cite[section 8.7.a]{AuDa}).

Applying the operator $\mathcal{K}$ on both sides of the equation 
\begin{equation}
\label{equ1}
(\dfrac{\partial}{\partial s} + A_{\gamma_{+}})\xi_{\infty} + \mu_{\infty} \nabla_{\lambda }\nabla H_{\lambda}(\lambda_{0}, \gamma_{+}) = 0,
\end{equation}
we obtain 
\begin{equation}
\xi_{\infty} = - \mu_{\infty}\mathcal{K} (\nabla_{\lambda}\nabla H_{\lambda}(\lambda_{0}. \gamma_{+}))
\end{equation}
However, $\nabla_{\lambda}\nabla H_{\lambda}(\lambda_{0}, \gamma_{+})$ is constant in $s$ and $K(s)$ has exponential decay on $\pm s$. This implies the exponential decay of $\xi_{\infty}$ when $s\rightarrow\pm \infty$. Therefore, the only pair that satisfies equation (\ref{equ1}) is $( \mu_{\infty}, \xi_{\infty}) = (0,0)$.

 This implies that $\lim\limits_{n \rightarrow \infty}\Vert(\mu_{n},\xi_{n})\Vert_{1,p, \Sigma_{n}}=0$.

Consider the sequences $(\beta^{-}(s+1)(\mu_{n},\xi_{n}))_{n\in \mathbb{N}}$ and $(\beta^{+}(s-1)(\mu_{n},\xi_{n}))_{n\in \mathbb{N}}$. After extracting a subsequence, we can show that these sequences converge to zero in $T_{\lambda_{0}}\Lambda \times W^{1,p}(w_{\chi}^{*}(T\tilde{M}))$. The proof of this is completely analogous to the standard case. For a detailed proof, the reader can consult \cite[Lemma 9.4.10-9.4.12]{AuDa}.
Finally, we have the following estimate:
\begin{align}
1 =& \Vert (\mu_{n},\xi_{n})) \Vert_{1,p}\\
 \leq & \Vert \beta^{-}(s+1)(\mu_{n},\xi_{n}))\Vert_{1,p} + \Vert \beta^{+}(s-1)(\mu_{n},\xi_{n}))\Vert_{1,p}\\ 
 & + \Vert(1- \beta^{-}(s+1)-\beta^{+}(s-1))(\mu_{n},\xi_{n}))\Vert_{1,p},
\end{align}
where the terms on the right tends to zero, giving a contradiction. Thus $P_{(\lambda_{0}, w_{\chi})}$
has a uniform bound.

To apply Lemma \ref{Contraction}, we need to obtain a quadratic bound on the non-linear part of $\tilde{\mathcal{F}}_{(\lambda_{0}, w_{\chi})}$. This bound follows from the following estimates:
\smaller
\begin{align}
\Vert N_{(\lambda_{0}, w_{\chi})}((\mu_{1}, \xi_{1}))-& N_{(\lambda_{0}, w_{\chi})}((\mu_{2}, \xi_{2}))\Vert_{p}\\ &\leq  \sup \limits_{(\mu, \xi) \in [(\mu_{1}, \xi_{1}), (\mu_{2}, \xi_{2})]} \Vert (dN)_{(\mu, \xi)}\Vert \Vert (\mu_{1}, \xi_{1}) - (\mu_{2}, \xi_{2})\Vert_{1,p} \\ &\leq K \sup \limits_{(\mu, \xi) \in [(\mu_{1}, \xi_{1}), (\mu_{2}, \xi_{2})]} \Vert (\mu, \xi) \Vert_{1,p} \Vert (\mu_{1}, \xi_{1}) - (\mu_{2}, \xi_{2})\Vert_{1,p} \\ &\leq K (\Vert (\mu_{1}, \xi_{1})\Vert_{1,p} + \Vert (\mu_{2}, \xi_{2})\Vert_{1,p} ) \Vert (\mu_{1}, \xi_{1}) - (\mu_{2}, \xi_{2})\Vert_{1,p}. 
\end{align}
\normalsize
Inequality (3.2.29) follows from a long computation that can be found for example in \cite[Theorem 3.a]{Fl1} or in the following Lemma:
\begin{lemma}\cite[11.4.5]{AuDa}
Let $r_{0} > 0$. There exist a constant $K > 0$ such that, for all $\rho > \rho_{0}$ and all $(\mu, \xi) \in T_{\lambda_{0}}\Lambda \times W^{1,p}(w^{*}_{\chi}(TM))$ with $(\Vert \xi \Vert \leq r_{0})$, we have $$\Vert dN_{(\lambda_{0}, w_{\chi})}(\mu, \xi)\Vert \leq K(\Vert \xi \Vert_{1,p} + \vert \mu \vert).$$
\end{lemma}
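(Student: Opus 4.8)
The statement is the standard quadratic estimate for the nonlinear part of the perturbed Cauchy--Riemann operator, and the plan is to derive it from $C^{1}$-bounds on the ``coefficients'' of $\mathcal{F}$ that are uniform in the gluing parameter $\rho$, combined with the Sobolev embedding $W^{1,p}(\mathbb{R}\times[0,1])\hookrightarrow C^{0}$ available since $p>2$. First I would work in the chart $\tilde{\exp}_{(\lambda_{0},w_{\chi})}(\mu,\xi)=(\lambda_{0}+\mu,\ \exp_{w_{\chi}}(\xi+\zeta(\mu,R,R')))$ already fixed above; write $w_{\mu,\xi}:=\exp_{w_{\chi}}(\xi+\zeta)$ and read off $\widetilde{\mathcal{F}}_{(\lambda_{0},w_{\chi})}(\mu,\xi):=\mathcal{F}\circ\tilde{\exp}_{(\lambda_{0},w_{\chi})}(\mu,\xi)$ in the fixed trivialization of the fibre $L^{p}(w^{*}(T\widetilde{M}))$ over $w_{\chi}$. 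The key structural remark is that $\bar{\partial}_{\mathbf{J},H}(w)=\partial_{s}w+J_{t}(w)(\partial_{t}w-X_{H_{t}}(w))$ is \emph{affine} in the first derivatives of $w$: after the substitution $w=\exp_{w_{\chi}}(\xi+\zeta)$ and parallel transport back to $w_{\chi}$, the operator acquires the schematic form
$$\widetilde{\mathcal{F}}_{(\lambda_{0},w_{\chi})}(\mu,\xi)=A(\mu,\xi)+B(\mu,\xi)\,\partial_{s}\xi+C(\mu,\xi)\,\partial_{t}\xi,$$
where $A,B,C$ depend on $\mu$, on the \emph{pointwise} value of $\xi$, and on the fixed data ($w_{\chi}$ and $\partial w_{\chi}$, the correction $\zeta$ and $\partial\zeta$, the families $\mathbf{J}^{\bullet}$ and $H^{\bullet}$, the exponential map and parallel transport along short geodesics), but never on $\partial\xi$.

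Since $N_{(\lambda_{0},w_{\chi})}=\widetilde{\mathcal{F}}_{(\lambda_{0},w_{\chi})}-\widetilde{\mathcal{F}}_{(\lambda_{0},w_{\chi})}(0,0)-D_{(\lambda_{0},w_{\chi})}$ and $D_{(\lambda_{0},w_{\chi})}$ is exactly the linearization of $\widetilde{\mathcal{F}}_{(\lambda_{0},w_{\chi})}$ at the origin, we have $(dN)_{(\mu,\xi)}=(d\widetilde{\mathcal{F}})_{(\mu,\xi)}-(d\widetilde{\mathcal{F}})_{(0,0)}=\int_{0}^{1}(d^{2}\widetilde{\mathcal{F}})_{t(\mu,\xi)}\cdot(\mu,\xi)\,dt$ as bounded operators, so it suffices to bound $\sup_{t\in[0,1]}\Vert(d^{2}\widetilde{\mathcal{F}})_{t(\mu,\xi)}\Vert$ over the ball $\{\Vert\xi\Vert_{1,p}\leq r_{0},\ |\mu|\leq\mu_{0}\}$, uniformly in $\rho>\rho_{0}$ (the segment $t\mapsto t(\mu,\xi)$ stays in this ball). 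Differentiating the displayed form twice, $(d^{2}\widetilde{\mathcal{F}})_{(\mu,\xi)}$ is a finite sum of terms, each of which is a product of a zeroth-order factor — an expression obtained by differentiating the fixed data and $\xi$, evaluated at points reached by the relevant curves and at parameters in $\Lambda$, paired with the pointwise values of the two test directions — times \emph{at most one} derivative factor drawn from $\partial_{s}\xi,\partial_{t}\xi,\partial_{s}\eta_{i},\partial_{t}\eta_{i}$. That ``at most one'' is forced precisely by the affineness above.

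The required uniform $C^{0}$-bounds on the zeroth-order factors rest on three inputs. First, all curves in play — $u$, $v$, the pre-glued $w_{\chi}$, and the Lagrangians $W,W'$ — have image in one fixed compact subset $\mathcal{K}\subset\widetilde{M}$ (the open-mapping/finite-energy argument used repeatedly in Sections~\ref{Chapter Floer homology} and \ref{chapitre whitehead}), on which $\mathbf{J}^{\lambda},H^{\lambda}$, the exponential map and parallel transport, together with all their derivatives in the $\widetilde{M}$- and $\lambda$-directions, are bounded for $\lambda$ in the compact interval $\Lambda$. Second, $w_{\chi}$ and $\partial w_{\chi}$ are bounded in $C^{0}$ \emph{independently of $\rho$}: $w_{\chi}$ is assembled by a fixed cut-off from the two \emph{fixed} finite-energy solutions $u,v$, whose first derivatives decay exponentially at the ends, while $\zeta$ is supported in a fixed $s$-window with jets controlled by those of $\beta^{\pm}$ and of $\lambda\mapsto\gamma^{\lambda}$; moreover the coefficients differ from their values at $(0,0)$ only on that window and on the exponentially small tails, so all the $L^{p}$-norms that occur are finite and bounded uniformly in $\rho$. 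Third, since $p>2$, $\Vert\xi\Vert_{C^{0}}\leq c\,\Vert\xi\Vert_{1,p}\leq c\,r_{0}$, so the $\xi$-dependence of the coefficients is harmless. Feeding these into the finitely many product terms and applying H\"older — each product carries at most one $L^{p}$-factor ($\partial_{s/t}\eta_{i}$ with $L^{p}$-norm $\leq\Vert\eta_{i}\Vert_{1,p}$, or $\partial_{s/t}\xi$ with $L^{p}$-norm $\leq r_{0}$) and otherwise $C^{0}$-controlled factors — yields $\Vert(d^{2}\widetilde{\mathcal{F}})_{(\mu,\xi)}\Vert\leq K$ with $K$ independent of $\rho$, hence $\Vert(dN)_{(\mu,\xi)}\Vert\leq K(|\mu|+\Vert\xi\Vert_{1,p})$.

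The genuine work, and the only place where something could go wrong, is the bookkeeping behind the last two paragraphs: verifying that after passing to $\exp$-coordinates no term of $d^{2}\widetilde{\mathcal{F}}$ ever carries two derivative factors, so every product has a $C^{0}$-controlled factor to absorb the Sobolev loss — this is exactly where affineness of $\bar{\partial}_{\mathbf{J},H}$ in the $1$-jet is used and where the argument would collapse for a genuine second-order operator — and checking that the coefficient-differences decay at the ends so that all $L^{p}$-norms are finite and bounded uniformly in $\rho$. Once $\mathcal{K}$ and the uniform $C^{1}$-control of $w_{\chi}$ are in hand, these are routine; the explicit computations are carried out in \cite[\S11.4]{AuDa} and \cite[Theorem 3.a]{Fl1}.
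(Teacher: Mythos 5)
Your outline is correct and is essentially the standard argument; note that the paper itself gives no proof of this lemma, citing it to \cite[Lemma 11.4.5]{AuDa} (and, for the resulting quadratic estimate, to \cite[Theorem 3.a]{Fl1}), and what you describe --- affineness of $\bar{\partial}_{\mathbf{J},H}$ in the $1$-jet so that $d^{2}\widetilde{\mathcal{F}}$ carries at most one derivative factor per term, the integral representation $dN_{(\mu,\xi)}=\int_{0}^{1}d^{2}\widetilde{\mathcal{F}}_{t(\mu,\xi)}\cdot(\mu,\xi)\,dt$, the Sobolev embedding for $p>2$, and $\rho$-uniform $C^{1}$-control of $w_{\chi}$ via compactness of the image and exponential decay at the ends --- is exactly the content of those references. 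The only caveat is that you explicitly defer the coefficient bookkeeping to the same sources, so your write-up is a correct reduction to, rather than a replacement of, the cited computation.
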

At this point, Lemma \ref{Contraction} applies, and we obtain that there exist a unique $\eta_{(\lambda_{0}, w_{\chi})}$ solving equation (17): $ \eta_{(\lambda_{0}, w_{\chi})} = -N_{(\lambda_{0}, w_{\chi})}(P_{(\lambda_{0}, w_{\chi})}(\eta_{(\lambda_{0}, w_{\chi})}))- \tilde{\mathcal{F}}_{\lambda_{0}, w_{\chi})}((0,0))$.

The gluing map is defined as follows:
\begin{align}
\mathcal{G}_{-}:\{u\} \times \hat{\mathcal{M}}^{1}_{0}(\gamma_{+}, \gamma)\times [\rho_{0}, \infty) &\rightarrow \hat{\mathcal{M}}^{1}_{\Lambda}( \gamma_{-}^{\Lambda}, \gamma^{\Lambda})\\
(\hat{u}, \hat{v}, \rho) &\mapsto \pi\circ \tilde{\exp}_{(0, w_{\chi})}(P_{(\lambda_{0}, w_{\chi})}(\eta_{(\lambda_{0}, w_{\chi})})).
\end{align}
Here, $\pi: \mathcal{M}^{1}_{\Lambda}( \gamma_{-}^{\Lambda}, \gamma^{\Lambda})\rightarrow \hat{\mathcal{M}}^{1}_{\Lambda}( \gamma_{-}^{\Lambda}, \gamma^{\Lambda}) $ is the projection to the unparametrized moduli space.
Once defined the gluing map, we arrive at the final step.
\vspace{0.5cm}

\textbf{Step 3.} In this step we show the so called "surjectivity" of the gluing map. The surjectivity in this context means that given a sequence $l_{n} \in \hat{\mathcal{M}}^{1}_{\Lambda}( \gamma_{-}, \gamma)$ converging to $(\hat{u},\hat{v})$, there is $N_{0} > 0$ such that for all $n > N_{0}$ we have $l_{n}\in Im(\mathcal{G}_{-})$.

Equivalently, we will show that any trajectory in the moduli space $\hat{\mathcal{M}}^{1}_{\Lambda}( \gamma_{-}, \gamma)$, in a small neighborhood of the the broken trajectory $(u,v)$ denoted by $U_{\epsilon}(u, v)$, is in the image of the gluing map. 

To do so, we see that for any $\epsilon > 0$ small enough there is a $\delta(\epsilon)$ such that the neighborhood of a broken trajectory $(u,v)$,
\begin{align*}
U_{\epsilon}(u,v) = \{(\mu, \nu)\in \mathcal{P}_{\Lambda}(\gamma_{-}^{\Lambda}, \gamma^{\Lambda}) \hspace{0.15cm}\vert\hspace{0.15cm} \vert\mu - \lambda_{0}\vert \leq \epsilon &\hspace{0.15cm} \forall s \hspace{0.1cm} \exists \tau(s) \hspace{0.15cm}\text{s.t. }\\ \text{d}(\nu(s), u(\tau))< \epsilon & \hspace{0.1cm} \vee \hspace{0.1cm} \text{d}(\nu(s), v(\tau))< \epsilon\},
\end{align*}
satisfies $$\hat{\mathcal{M}}^{1}_{\Lambda}( \gamma_{-}^{\Lambda}, \gamma^{\Lambda}) \cap U_{\epsilon}(u,v) \subset V_{\epsilon},$$
where $V_{\epsilon}$ is a neighborhood of the pre-glued trajectories defined as follows:
\begin{align*}
V_{\epsilon} = \{ \hspace{0.1cm} \tilde{\exp}_{(\lambda_{0}, T_{\tau}w_{\chi})}(\mu, \xi) \hspace{0.2cm} \vert\hspace{0.1cm} \chi = (u, v, &\rho), \hspace{0.2cm} v \in \hat{\mathcal{M}}_{\lambda_{0}}(\gamma_{+}, \gamma), \hspace{0.1cm}\rho \in (\rho_{0}, \infty),\\
&(\mu, \xi) \in \mathbb{R}\oplus W^{1,p}_{L}(w_{\chi}^{*}(T\tilde{M})), \hspace{0.1cm}  \vert \mu \vert + \Vert \xi \Vert_{1,p}\leq \epsilon \}.
\end{align*}
We first show that if $(\lambda,\hat{w}) \in  \hat{\mathcal{M}}^{1}_{\Lambda}( \gamma_{-}^{\Lambda}, \gamma^{\Lambda}) \cap U_{\epsilon}(u,v)$, then $\tilde{\exp}_{(\lambda_{0}, w_{\chi})}(\lambda - \lambda_{0}, \xi) = (\lambda, w)$ for some choice of $\chi = (u,v, \rho)$ and $(\lambda, \xi )$ in a small chart centered at $(\lambda_{0} , w_{\chi})$ satisfying $\Vert(\lambda, \xi )\Vert < \epsilon$ (for the norm induced by the Riemannian metric). 

To complete the proof we have to verify that $\vert \lambda - \lambda_{0}\vert + \Vert \xi \Vert_{1,p} < \delta(\epsilon)$, which implies that $(\lambda, w)\in V_{\delta(\epsilon)}$.

Let us choose $\chi = (u, v, \rho)$, such that $\mathcal{A}_{H_{\lambda_{0}}}(u(0)) = \mathcal{A}_{H_{\lambda}}(w(-\rho))$ and $\mathcal{A}_{H_{\lambda_{0}}}(v(0)) = \mathcal{A}_{H_{\lambda}}(w(\rho))$ (such a $\rho$ can be found by re-parametrization). Let be $R > 0$ with the property that outside $[-R, R] \times [0, 1]$ the value of $u, v$ is in a small ball $B_{\epsilon'}(\eta)$ for $\eta = \gamma_{-}, \gamma_{+}, \gamma$. 

Since $(\lambda, w_{\chi})$ is also close to $(u, v)$, we can suppose that $\epsilon$ is smaller than the injectivity radius of $\exp$  for a big enough $R$. Hence we can write $(\lambda, w) = \tilde{\exp}_{(\lambda_{0}, w_{\chi})}(\lambda - \lambda_{0}, \xi)$ for some $\xi$ with $\Vert \xi \Vert < \epsilon$ for the norm induced by the metric.

Observe now that $\vert \lambda -\lambda_{0} \vert < \epsilon$. To see that $\Vert \xi \Vert_{W^{1,p}}  < \delta(\epsilon)$, decompose $\Sigma:=\mathbb{R}\times [0, 1]$, the domain of $\xi$, in the following sub-domains: $\Sigma_{-}:= (-\infty, -R-\rho)\times [0, 1]$, $\Sigma_{-,0}:= ( -\rho -R, -\rho + R)\times [0, 1]$, $\Sigma_{0}:= (-\rho + R, \rho - R )\times [0, 1]$, $\Sigma_{+,0}:= (\rho - R, \rho+ R)\times [0, 1]$ and $\Sigma_{+}:= (R + \rho, \infty)\times [0, 1]$.

The weak compactness guarantees that in $\Sigma_{-,0}, \Sigma_{+,0}$, $\Vert \xi \Vert_{W^{1,p}} < C\epsilon R^{\frac{1}{p}}$, where $C > 0$ is a constant independent on $R$ and $\rho$. The exponential decay of $w_{\chi}$ and $w$ to $\gamma_{-}, \gamma$ implies that $\Vert \xi \Vert_{W^{1,p}} < C_{\pm}\epsilon $ on $\Sigma_{\pm}$.

Finally on $\Sigma_{0}$, there is $\zeta, \tilde{\zeta} \in C^{\infty}(\Sigma_{0},\tilde{\gamma}_{+}^{*}(T\tilde{M}))$ such that
$$w(s,t) = \exp_{\gamma_{+}(t)}(\zeta(s,t));\hspace{0.2cm} w_{\chi}(s,t) = \exp_{\gamma_{+}(t)}(\tilde{\zeta}(s,t)).$$
Since $w$ satisfies the perturbed Cauchy-Riemann equation, $\zeta$ satisfies the following equation:
\begin{equation}
(\frac{\partial}{\partial s} + A_{\gamma_{+}})\zeta + (\lambda - \lambda_{0}) \nabla_{\lambda} \nabla H^{\lambda_{0}}(\gamma_{+}) + N_{(\lambda_{0}, \gamma_{+})}(\lambda - \lambda_{0}, \zeta) = 0,
\end{equation}
where $N_{(0, \gamma_{+})}$ is the non-linear term left.
Applying the integral kernel to the equation below we obtain that:
\begin{equation}
\zeta + \mathcal{K}*(\mu \nabla_{\lambda} \nabla H^{\lambda_{0}})(\gamma_{+}) + N_{(\lambda_{0}, \gamma_{+})}((\lambda - \lambda_{0}), \zeta)) + \zeta_{0} = 0,
\end{equation}
where $\zeta_{0} \in \text{Ker}(\frac{\partial}{\partial s} + A_{\gamma_{+}})$ on $\Sigma_{0}$.
By definition, $\Vert \zeta \Vert_{1, \infty} \leq \epsilon$. Moreover, using the exponential growth/decay of $\zeta_{0}$, we can estimate:
\begin{equation}
\Vert \zeta_{0} \Vert_{p, \Sigma_{0}} \leq C\Vert \zeta_{0} \Vert_{\infty} \leq C_{0}(\epsilon + \Vert \mathcal{K} \Vert_{1} \Vert \mu \nabla_{\lambda} \nabla H^{0}(\gamma_{+}) + N_{(0, \gamma_{+})}(\mu, \zeta))\Vert_{\infty}), 
\end{equation}
and therefore
\begin{equation}
\Vert \zeta \Vert_{p, \Sigma_{0}} (\Vert \mathcal{K} \Vert_{p} + C')(C_{1}|\mu | + C_{2} \Vert \zeta \Vert_{W^{1,\infty}}) \leq C\epsilon.
\end{equation}
The exponential decay of $\mathcal{K}$ implies a uniform bound on $\Vert \mathcal{K} \Vert_{1}$ and $\Vert \mathcal{K} \Vert_{p}$.
On the other hand, from the exponential decay of $w_{\chi}$ to $\gamma_{+}$ over $\Sigma_{0}$, we obtain the estimate $\Vert \tilde{\zeta}\Vert_{W^{1, \infty}, \Sigma_{0}} < C_{4}e^{-C_{5}R}$, for some $C'>0$, and thus since similar estimates follow for $\frac{\partial}{\partial s}\zeta$ and $\frac{\partial}{\partial t}\zeta$, we obtain that
$$ \Vert \xi \Vert_{1,p,\Sigma_{0}} , C_{5}(\Vert \zeta \Vert_{1,p,\Sigma_{0}} + \Vert \tilde{\zeta}\Vert_{1,p,\Sigma_{0}}) \leq C_{6}\epsilon + C_{7}e^{-C_{5}R} R^{\frac{1}{p}}.$$
Summing up the estimates on each region of $\Sigma$, and putting $R = C''\epsilon^{-1}$, we obtain the bound $$|\mu| + \Vert \xi \Vert_{1,p} < C_{3} \epsilon = \delta(\epsilon).$$
\end{proof}

The expressions,
\smaller
\begin{align}
\partial \bar{\widehat{\mathcal{M}}}^{1}_{[\lambda_{0}-\epsilon, \lambda_{0}+ \epsilon]}(x^{\Lambda}_{-}, y^{\Lambda}) &= -\widehat{\mathcal{M}}_{\lambda_{0}-\epsilon}^{1}(x^{\lambda_{0}}_{-}, y^{\lambda_{0}}) \cup \widehat{\mathcal{M}}_{\lambda_{0}+\epsilon}^{1}(x^{1}_{-}, y^{1}) \cup -\{u\}\times\widehat{\mathcal{M}}_{\lambda_{0}}^{1}(x^{\lambda_{0}}_{+}, y^{\lambda_{0}})\\
\partial \bar{\widehat{\mathcal{M}}}^{1}_{[\lambda_{0}-\epsilon, \lambda_{0}+ \epsilon]}(x^{\Lambda}, x^{\Lambda}_{+}) &= -\widehat{\mathcal{M}}_{\lambda_{0}-\epsilon}^{1}(x^{\lambda_{0}}, x^{\lambda_{0}}_{+}) \cup \widehat{\mathcal{M}}_{\lambda_{0}+\epsilon}^{1}(x^{1}, x^{1}_{+}) \cup \widehat{\mathcal{M}}_{\lambda_{0}}^{1}(x^{\lambda_{0}}, x^{\lambda_{0}}_{-})\times\{u\},
\end{align}
\normalsize
follow from the previous Gluing theorem and the choice of orientations. It also implies the Corollary \ref{ColH-S} below.

To state it, consider the following linear map between the Floer complex before and after the handle-slide:
\begin{align*}
f: CF(W, \psi_{\lambda_{0}-\epsilon}(W');\mathbb{Z}(\pi_{1}W), \mathbf{J}_{\lambda_{0}-\epsilon}) &\rightarrow CF(W,\psi_{\lambda_{0}+\epsilon}; \mathbb{Z}(\pi_{1}W), \mathbf{J}_{\lambda_{0}+\epsilon}),\\
x^{\lambda_{0} - \epsilon} &\mapsto x^{\lambda_{0} + \epsilon} + \text{sign}(u)[\Theta(u)]\delta^{x^{\lambda_{0} - \epsilon}}_{x_{-}^{\lambda_{0} - \epsilon}} x_{+}^{\lambda_{0} + \epsilon}.
\end{align*}
Here, sign$(u)$\footnote{ The definition of sign$(u)$, will be object of the next section were we will discus orientations and we will fix the conventions used in the present work.} is a sign coming from the conventions on orientation.

\begin{corollary}\label{ColH-S}
The map $f$ defined before is a chain map. Thus, the Floer complex after a handle-slide $(\lambda_{0}, u) \in \mathcal{M}_{\Lambda}^{0}(x_{-}^{\Lambda},x_{+}^{\Lambda})$ occurs is described by:\\
$(CF(W,\psi_{\lambda_{0}+\epsilon}; \mathbb{Z}(\pi_{1}W), \mathbf{J}_{\lambda_{0}+\epsilon}), d_{\lambda_{0} + \epsilon}) \cong (CF(W, \psi_{\lambda_{0}-\epsilon}(W');\mathbb{Z}(\pi_{1}W), \mathbf{J}_{\lambda_{0}-\epsilon}), f d_{\lambda_{0}-\epsilon} f^{-1}).$
\end{corollary}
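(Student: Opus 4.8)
The plan is to read the corollary off the boundary descriptions \textup{(12)--(13)} of the compactified one‑parameter moduli spaces, which were obtained above from Gromov compactness together with the Gluing Theorem \ref{Gluing}, combined with the tautology that the signed count of the boundary of a compact one‑dimensional manifold is zero. Since we have assumed that exactly one handle‑slide $(\lambda_{0},u)$ and no birth--death occurs at $\lambda=\lambda_{0}$, continuation along the family $\{W\cap\psi_{\lambda}(W')\}_{\lambda}$ gives a canonical bijection between the generators of $CF(W,\psi_{\lambda_{0}-\epsilon}(W');\mathbb{Z}(\pi_{1}W),\mathbf{J}_{\lambda_{0}-\epsilon})$ and those of $CF(W,\psi_{\lambda_{0}+\epsilon}(W');\mathbb{Z}(\pi_{1}W),\mathbf{J}_{\lambda_{0}+\epsilon})$. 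Under this identification both differentials $d_{\lambda_{0}\pm\epsilon}$ become matrices over $\mathbb{Z}(\pi_{1}W)$, and $f$ is the identity on every generator except $x_{-}^{\lambda_{0}-\epsilon}\mapsto x_{-}^{\lambda_{0}+\epsilon}+\text{sign}(u)[\Theta(u)]\,x_{+}^{\lambda_{0}+\epsilon}$, i.e.\ a transvection; in particular $f$ is invertible and, being elementary, $\tau(f)=0$ in $Wh(\pi_{1}W)$. It therefore suffices to prove $f\circ d_{\lambda_{0}-\epsilon}=d_{\lambda_{0}+\epsilon}\circ f$, since this is equivalent to $d_{\lambda_{0}+\epsilon}=f\,d_{\lambda_{0}-\epsilon}\,f^{-1}$ after the identification of the underlying modules.

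First I would extract the relevant algebraic identities from \textup{(12)--(13)}. Writing $x_{\pm}$ for $x_{\pm}^{\lambda}$ and suppressing $\lambda$ in the subscripts, note that $u$ is the unique degenerate index‑$0$ strip at $\lambda_{0}$ and that no other index‑$1$ moduli space bifurcates there; consequently, for any pair of families $(z^{\Lambda},w^{\Lambda})$ not meeting $(x_{-},x_{+})$ as in \textup{(12)--(13)}, the $\lambda_{0}$‑interior boundary of $\overline{\widehat{\mathcal{M}}}^{0}_{[\lambda_{0}-\epsilon,\lambda_{0}+\epsilon]}(z^{\Lambda},w^{\Lambda})$ is empty, whence $\langle d_{\lambda_{0}+\epsilon}(z),w\rangle=\langle d_{\lambda_{0}-\epsilon}(z),w\rangle$; in particular $d_{\lambda_{0}+\epsilon}(x_{+})=d_{\lambda_{0}-\epsilon}(x_{+})$. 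For the two exceptional families, summing "signed boundary $=0$" over the $1$‑manifolds in \textup{(12)} and \textup{(13)} and weighting the broken configurations $u\#v$ and $v\#u$ by $[\Theta(u\#v)]=[\Theta(u)][\Theta(v)]$ (exactly as in the $d^{2}=0$ computation) gives
$$d_{\lambda_{0}+\epsilon}(x_{-})=d_{\lambda_{0}-\epsilon}(x_{-})-\text{sign}(u)[\Theta(u)]\,d_{\lambda_{0}-\epsilon}(x_{+}),$$
$$\langle d_{\lambda_{0}+\epsilon}(z),x_{+}\rangle=\langle d_{\lambda_{0}-\epsilon}(z),x_{+}\rangle+\text{sign}(u)[\Theta(u)]\,\langle d_{\lambda_{0}-\epsilon}(z),x_{-}\rangle\qquad(z\ne x_{-}),$$
where one also uses that the moduli spaces out of, and into, $x_{\pm}$ do not themselves bifurcate at $\lambda_{0}$, and that the diagonal entries $\langle d_{\bullet}(x),x\rangle$ vanish by the action filtration.

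Then I would verify, one generator at a time, that these relations are exactly $f\circ d_{\lambda_{0}-\epsilon}=d_{\lambda_{0}+\epsilon}\circ f$. For $z\ne x_{-}$ we have $f\,d_{\lambda_{0}-\epsilon}(z)=d_{\lambda_{0}-\epsilon}(z)+\langle d_{\lambda_{0}-\epsilon}(z),x_{-}\rangle\,\text{sign}(u)[\Theta(u)]\,x_{+}$ and $d_{\lambda_{0}+\epsilon}\,f(z)=d_{\lambda_{0}+\epsilon}(z)$, and the second displayed relation identifies them; for $z=x_{-}$ we expand $d_{\lambda_{0}+\epsilon}\,f(x_{-})=d_{\lambda_{0}+\epsilon}(x_{-})+\text{sign}(u)[\Theta(u)]\,d_{\lambda_{0}+\epsilon}(x_{+})$ and identify it with $f\,d_{\lambda_{0}-\epsilon}(x_{-})=d_{\lambda_{0}-\epsilon}(x_{-})$ using the first relation together with $d_{\lambda_{0}+\epsilon}(x_{+})=d_{\lambda_{0}-\epsilon}(x_{+})$. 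Hence $f$ is a chain map, so it is a chain isomorphism $(CF(W,\psi_{\lambda_{0}-\epsilon}(W');\mathbb{Z}(\pi_{1}W),\mathbf{J}_{\lambda_{0}-\epsilon}),d_{\lambda_{0}-\epsilon})\to(CF(W,\psi_{\lambda_{0}+\epsilon}(W');\mathbb{Z}(\pi_{1}W),\mathbf{J}_{\lambda_{0}+\epsilon}),d_{\lambda_{0}+\epsilon})$, which after the identification of underlying modules exhibits $d_{\lambda_{0}+\epsilon}=f\,d_{\lambda_{0}-\epsilon}\,f^{-1}$, as claimed. Since $f$ is a transvection it is moreover a simple isomorphism of $\mathbb{Z}(\pi_{1}W)$‑complexes, so $\tau\big(CF(W,\psi_{\lambda_{0}-\epsilon}(W'))\big)=\tau\big(CF(W,\psi_{\lambda_{0}+\epsilon}(W'))\big)$.

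I expect the only genuine difficulty to be bookkeeping rather than analysis: matching the signs coming from the coherent orientations in \textup{(12)--(13)} with the explicit $\text{sign}(u)$ entering the definition of $f$, and keeping the noncommutative factor $[\Theta(u)]\in\mathbb{Z}(\pi_{1}W)$ on the correct side when $u$ is concatenated with a Floer trajectory. This is precisely why the meaning of $\text{sign}(u)$ and the orientation conventions are deferred to Section \ref{Chapter orientations}; once those are fixed, no analytic input beyond the already‑proved Gluing Theorem \ref{Gluing} is needed to complete the proof.
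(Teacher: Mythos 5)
Your proposal is correct and follows essentially the same route as the paper: the paper's proof likewise extracts from the gluing relations (12)--(13) exactly your two identities, namely $d_{+}x_{-}=d_{-}x_{-}-\mathrm{sign}(u)[\Theta(u)]\,d_{-}x_{+}$ and $d_{+}x=d_{-}x+\langle d_{-}x,x_{-}\rangle\,\mathrm{sign}(u)[\Theta(u)]\,x_{+}$, and then checks $d_{+}=f\,d_{-}\,f^{-1}$ on the two exceptional generators. The only quibble is that in your second displayed relation the factor $\mathrm{sign}(u)[\Theta(u)]$ sits on the wrong side of $\langle d_{\lambda_{0}-\epsilon}(z),x_{-}\rangle$ for the noncommutative ring $\mathbb{Z}(\pi_{1}W)$ --- concatenating the incoming trajectory with the handle-slide puts $[\Theta(u)]$ on the right, as your own verification paragraph (and the paper) correctly has it.
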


\begin{proof}
To simplify the notation, let us write $d_{\pm} = d_{\lambda_{0} \pm \epsilon}$. Notice that the only difference on the differential occurs when computing $d_{+}(x_{-}^{\lambda_{0} + \epsilon})$ or $d_{+}(x^{\lambda_{0} + \epsilon})$ on  an intersection point $x^{\lambda_{0} + \epsilon}$ with $\mu(x_{+}^{\lambda_{0} + \epsilon}, x^{\lambda_{0} + \epsilon}) = 1$. 

From the gluing theorem we have the relation $d_{+}x_{-}^{\lambda_{0} + \epsilon} - d_{-}x_{-}^{\lambda_{0} - \epsilon} + \text{sign}(u)[\Theta(u)]d_{-}x_{+}^{\lambda_{0} - \epsilon} = 0$, which clearly implies that $d_{+}(x_{-}^{\lambda_{0} + \epsilon}) = f d_{-} f^{-1}(x^{\lambda_{0} + \epsilon})$.  For the second case, let $x^{\lambda_{0} + \epsilon}$ be such that $\mu(x_{+}^{\lambda_{0} + \epsilon}, x^{\lambda_{0} + \epsilon}) = 1$. 

The gluing theorem implies the relation $d_{+}x^{\lambda_{0} + \epsilon} - d_{-}x^{\lambda_{0} - \epsilon} - \langle x_{-}^{\lambda_{0} - \epsilon}, d_{-}x^{\lambda_{0} - \epsilon}\rangle \text{sign}(u)[\Theta(u)]x_{+}^{\lambda_{0} + \epsilon} = 0$, thus $d_{+}(x^{\lambda_{0} + \epsilon}) = f d_{-} f^{-1}(x^{\lambda_{0} + \epsilon})$.
\end{proof}

 This completes the description of the bifurcations along the family $\{(\mathbf{J},H)\}_{\lambda \in \Lambda}$.

\subsection{Proof of Theorems \ref{theorem invariance} and \ref{s-theorem}}

We will use the following Lemma. 
\begin{lemma}[\cite{Su}, Lemma 2.13]
Let $(C, d)$ be a $\mathbb{Z}(\pi_{1}W)$-complex.
\begin{enumerate}
\item If $(T, d_{T})$ is a trivial acyclic $\mathbb{Z}(\pi_{1}W)$-complex $0\rightarrow T_{k}\rightarrow T_{k-1}\rightarrow 0$, where $T_{k} = T_{k-1} \cong \mathbb{Z}(\pi_{1}W)$ and the corresponding preferred bases are isomorphic. Then, $\tau((C, d)) = \tau( (C\oplus T, d\oplus d_{T}))$.
\item The chain map $f:C\rightarrow C$, $x\mapsto x + \delta^{x}_{x_{-}}\alpha x_{+}$ satisfies $\tau((C, d)) = \tau((C, fdf^{-1}))$.
\end{enumerate}
\end{lemma}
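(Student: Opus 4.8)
The plan is to derive both identities directly from the algebraic definition of the Whitehead torsion $\tau(C,d)\in Wh(\pi_{1}W)$ of an acyclic based $\mathbb{Z}(\pi_{1}W)$-complex, using only the two formal properties recorded above --- invariance under simple isomorphisms and additivity under direct sums --- together with the description $Wh(\pi_{1}W)=GL(\mathbb{Z}(\pi_{1}W))/E(\pi_{1}W)$. Here $(C,d)$ is understood to be acyclic, as it is in all our applications (the Floer complex of a displaceable pair); then every complex appearing below is acyclic as well, so all the torsions in question are defined.

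For (1), I would first observe that $(T,d_{T})$ is acyclic, with chain contraction $s_{T}:=d_{T}^{-1}\colon T_{k-1}\to T_{k}$. Since by hypothesis $d_{T}$ identifies the preferred bases of $T_{k}$ and $T_{k-1}$, it is a simple isomorphism, so $\tau(d_{T})=0$ and hence $\tau(T,d_{T})=\tau(d_{T}^{\pm1})=0$: for a two-term acyclic complex the torsion is, up to sign, that of its differential. Equipping $C\oplus T$ with the preferred basis obtained as the union of the preferred bases of $C$ and of $T$, the additivity property gives $\tau(C\oplus T,d\oplus d_{T})=\tau(C,d)+\tau(T,d_{T})=\tau(C,d)$, which is the assertion. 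Should one prefer a hands-on check: $s\oplus s_{T}$ is a chain contraction of $C\oplus T$, and $(d\oplus d_{T}+s\oplus s_{T})_{\mathrm{odd}}$ is block diagonal with blocks $(d+s)_{\mathrm{odd}}$ and $d_{T}^{\pm1}$, whose torsion is $\tau((d+s)_{\mathrm{odd}})+\tau(d_{T}^{\pm1})=\tau(C,d)$.

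For (2), the key point is that the automorphism $f$ of the graded module $C$ is a \emph{simple} isomorphism. One checks that $d'':=fdf^{-1}$ satisfies $(d'')^{2}=fd^{2}f^{-1}=0$ and that $f^{-1}\colon(C,d'')\to(C,d)$ is a chain isomorphism, since $f^{-1}d''=df^{-1}$. Now $f$ acts as the identity on $C_{*}$ in every degree except the common degree $|x_{-}|=|x_{+}|$ of $x_{\pm}$, where in the preferred basis it is represented by the matrix $I+\alpha E_{x_{+}x_{-}}$, which differs from the identity only in the single entry indexed by (row $x_{+}$, column $x_{-}$), equal to $\alpha\in\mathbb{Z}(\pi_{1}W)$; this entry is off the diagonal because $x_{-}\neq x_{+}$. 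Such a matrix is elementary, hence lies in $E(\pi_{1}W)$ and has vanishing Whitehead torsion, and the same holds for $f^{-1}=I-\alpha E_{x_{+}x_{-}}$. Thus $f$ (equivalently $f^{-1}$) is a simple isomorphism of $\mathbb{Z}(\pi_{1}W)$-complexes, and invariance of torsion under simple isomorphisms yields $\tau(C,fdf^{-1})=\tau(C,d)$. In the application of Corollary \ref{ColH-S} one has $\alpha=\mathrm{sign}(u)[\Theta(u)]\in\pm\pi_{1}W$, an admissible off-diagonal entry, and the two intersection points joined by the handle-slide are distinct, so all the hypotheses used here hold.

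Both statements are formal, so no serious obstacle is expected. The only points requiring attention are the bookkeeping of chain contractions and preferred bases under the direct-sum operation in (1), and, in (2), the observation that $f$ is degree-preserving --- which uses that a handle-slide strip has Maslov index $0$, so $\mu(x_{-})=\mu(x_{+})$ and $x_{\pm}$ lie in the same chain group --- together with the requirement $x_{-}\neq x_{+}$, which ensures that $f$ differs from the identity only by an off-diagonal, hence elementary, matrix.
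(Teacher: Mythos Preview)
The paper does not supply its own proof of this lemma: it is simply quoted from \cite{Su} and then used. Your argument is correct and is the standard one --- both parts follow immediately from the two properties of Whitehead torsion recorded in Section~\ref{Chapter Simple h-theory} (invariance under simple isomorphisms and additivity on direct sums), exactly as you organize it.
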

Finally we have all the elements to prove the results of the section.
\begin{proof}[Proof of Theorem \ref{theorem invariance}]

Choose a regular homotopy $\{\mathcal{D}_{\lambda}\}_{\lambda \in\Lambda} = \{ (\psi_{\lambda}, \mathbf{J}_{\lambda})\}$ joining $\mathcal{D}_{0}=(id,\tilde{\mathbf{J}})$ with $\mathcal{D}_{1}=(\phi^{H}_{1},\tilde{\mathbf{J}}')$ where $\{\psi_{\lambda}\}$ is a horizontal isotopy. Such a homotopy exists from Proposition 4.1.

Assume that a birth arrives at $\lambda = \lambda_{0}$. Since the birth is isolated, we can restrict to a sub-interval $[\lambda_{0} - \epsilon, \lambda_{0} + \epsilon]$ where the only bifurcation is the birth. We now apply Lemma 4.1 to find a regular homotopy that makes the birth independent after stabilizing twice. From Remark \ref{birth-death} and the Lemma 4.5, the Whitehead torsion does not change after a birth. We treat in the same way  a death bifurcation.

Suppose now that there is a handle-slide at $\lambda = \lambda_{0}$. The gluing theorem  and Corollary 4.1 and Lemma 4.5 imply that the Whitehead torsion does not change.

Finally, any intersection point $x$ generates a path $x^{\Lambda}$ along the isotopy. Then, the matrix $(d_{0} + \delta_{0})_{odd}$ associated to the complex $CF(W, W';\mathbb{Z}(\pi_{1}W),\tilde{J})$ may differ by some factors on $\mathbb{Z}(\pi_{1}W)$, from the matrix $(d_{1} + \delta_{1})_{odd}$ associated to the Floer complex $CF(W, \phi_{1}^{H}(W');\mathbb{Z}(\pi_{1}W),\tilde{J}')$. Since for any intersection point, $\psi_{\lambda}(x)$ is a loop. This difference can be expressed with a series of elementary operations. This implies that the Whitehead torsion of the two complexes coincide.
\end{proof}
\begin{proof}[Proof of Theorem \ref{s-theorem}]

By Corollary 2.1, an exact-orientable- spin Lagrangian cobordism $(W; L_{0}, L_{1})$, such that the map $i_{\#}:\pi_{1}(L_{i})\rightarrow \pi_{1}(W)$ induced by the inclusion $L_{i} \hookrightarrow W$ is an isomorphism for $i =0,1$, is an h-cobordism. Thus, it has a well defined Whitehead torsion.

The Whitehead torsion satisfies:
\begin{align}
\tau(W,L_{0}):=& \tau(C(\widetilde{W},\widetilde{L_{0}}))\\
 =& \tau(C(W,L_{0};\mathbb{Z}(\pi_{1}L_{0}))) \\
 = &\tau(CM(W;\mathbb{Z}(\pi_{1}L_{0}),(\widetilde{f}_{L_{0}}, g_{J}))) \\
 =&\tau(CF(W,\phi_{1}^{H}(W');\mathbb{Z}(\pi_{1}L_{0})),\mathbf{J})\\
 =&0.
\end{align}
The equality (51), follows from Theorem 3.3 and remark 3.1. The equality (52) follows from the proof of Proposition 2.4. For any Morse function without critical points on a collar neighborhood of the boundary, the Whitehead torsion of the cobordism coincides with the Whitehead torsion of the $\mathbb{Z}(\pi_{1}W)$-Morse complex.

Here, $(\widetilde{f}_{L_{0}}, g_{J})$ is a pair composed by a Morse function and $g_{J}$ is the Riemannian metric induced by an almost complex structure $J\in \widetilde{\mathcal{J}}_{B}$ as defined in section \ref{Chapter Floer homology}. 

Then, we can compute the Whitehead torsion of the cobordism $(W; L_{0}, L_{1})$, using the $\mathbb{Z}(\pi_{1}W)$-Morse complex $CM(W,(\widetilde{f}_{L_{0}}, g_{J});\mathbb{Z}(\pi_{1}L_{0}))$.

Since for small Hamiltonian function, $H = \epsilon\tilde{f}\circ \pi$ as in Proposition 2.4, the Morse complex and the Floer complex coincide, we obtain (52). The equality (53) follows from the Theorem \ref{theorem invariance}. Since the torsion of the Floer complex is independent on the choice of horizontal isotopy, and there is a horizontal isotopy that displaces $\phi_{1}^{H}(W')$ from $W$ (where $W' = \phi_{1}^{f_{L_{0}}\circ \pi}(W)$), for such a pair $(W, \phi_{1}^{H}(W'))$ the Floer complex is trivial so the Whitehead torsion of this complex vanishes.
\end{proof}

\section{Orientations and signs}
\label{Chapter orientations}

Let $(L_{0}, L_{1})$ be a pair of Lagrangians intersecting transversely and $(H,\mathbf{J})$ be a generic pair composed by a compactly supported Hamiltonian function and a time dependent almost complex structure $\mathbf{J}$. 

In this work we used orientation results for the following moduli spaces:

For $x, y \in L_{0}\cap L_{1}$, 
\begin{equation}\label{J-hom strip}
\mathcal{M}(x, y, \mathbf{J})= \{ u \in C^{\infty}(\mathbb{R}\times [0, 1], M) \hspace{0.2cm}\mid \hspace{0.2cm} \substack{ u(s, i) \in L_{i},\hspace{0.2cm} \bar{\partial}_{\mathbf{J}}(u)= 0\\
 \lim \limits_{s \to \mp \infty} u(s, t) = x/y, \hspace{0.2cm} E(u) < \infty }  \},
\end{equation}
\begin{equation}\label{mov bound}
\mathcal{M}_{\varphi}(x, y, \mathbf{J})= \{ u \in C^{\infty}(\mathbb{R}\times [0, 1], M) \hspace{0.2cm}\mid \hspace{0.2cm} \substack{u(s, 0) \in L_{0},\hspace{0.2cm} u(s,1) \in \varphi_{\beta(s)}(L_{1}), \\ \bar{\partial}_{\mathbf{J}}(u)= 0\\ 
\lim \limits_{s \to \mp \infty} u(s, t) = x/y, \hspace{0.2cm} E(u) < \infty } \}.
\end{equation}
For $\gamma_{-}, \gamma_{+}\in \mathcal{O}_{*}(H)$, where  $\mathcal{O}_{*}(H)$ is the space of paths connecting $L_{0}$ with $L_{1}$, that are orbits of the Hamiltonian flow $\psi^{H}_{t}$,
\begin{equation}\label{JH-tube}
\mathcal{M}(\gamma_{-}, \gamma_{+};H,\mathbf{J})= \{ u \in
C^{\infty}(\mathbb{R}\times[0,1],M)   \hspace*{0.2cm} |  \hspace*{0.2cm} \substack{u(s,i) \in L_{0}, \\ \overline{\partial}_{\mathbf{J}}u = - \nabla H(u,t),\\
  \lim \limits_{s\to \pm\infty}u(s,t) = \gamma_{\pm}(t), E(u) < \infty \\
} \}.
\end{equation}
Let $\{(\psi_{\lambda},\mathbf{J}_{\Lambda})\}_{\lambda \in \Lambda}$ be a one parameter family of regular data at each time $\lambda$. Then there are families of orbits $\gamma_{-}^{\Lambda}, \gamma_{+}^{\Lambda}$, 
\begin{equation}\label{J-parm}
\mathcal{M}_{\Lambda}(\gamma_{-}^{\Lambda}, \gamma_{+}^{\Lambda})= \{ (\lambda, w) \hspace{0.02cm}\mid \hspace{0.02cm} \lambda \in \Lambda, \hspace{0.02cm} w \in \mathcal{M}_{\lambda}^{1}(\gamma_{-}^{\lambda}, \gamma_{+}^{\lambda})= \mathcal{M}^{1}(\gamma_{-}^{\lambda}, \gamma_{+}^{\lambda};H_{\lambda}, \mathbf{J}_{\lambda})\}. 
\end{equation}
The propose of this section is to fix the conventions used to orient the previous moduli spaces. We will also recall some known results concerning the orientation problem for these moduli spaces, following \cite{FOOO} and \cite{Le2} closely. 

\subsection{Determinant line bundles and stabilization of Fredholm operators}

Let $E, F$ be Banach spaces. To any Fredholm operator $D:E\rightarrow F$ we can associate a one-dimensional real vector space called the \textit{line determinant}: $$\text{Det}(D) := \Lambda^{max}(\text{Ker}(D))\otimes \Lambda^{max}(\text{Coker}(D))^{*}.$$
Given a family of Fredholm operators $\{D_{\lambda}\}_{\lambda \in \Lambda}$, the vector bundle induced by taking the determinant line at each parameter $\bigcup \limits_{ \lambda \in \Lambda}\{\lambda\}\times\text{Det}(D_{\lambda})\rightarrow \Lambda$ is a line bundle, called the \textit{Determinant line bundle}. 

To a moduli space of $J$-holomorphic curves $\mathcal{M}$ one naturally associates a family of Fredholm operators, giving rise to a determinant line bundle on it. The problem of orientability of a moduli space translates to the study of its associated determinant line bundle. 
\begin{definition}
Let $D: E \rightarrow F$ be a Fredholm operator and $\psi:\mathbb{R}^{k}\rightarrow F$ be a linear map. A rank-k stabilization of $D$ is a finite dimensional extension, $\hat{D}_{\psi}: \mathbb{R}^{k}\oplus E \rightarrow F$ of it, given by $\hat{D}_{\psi}(\overline{r},\xi) = \psi(\overline{r}) + D(\xi)$.
\end{definition} 

\subsection{Conventions}
 
\subsubsection{Orientation for direct sums, exact sequences} 

Let $E = E_{1}\oplus\cdots\oplus E_{k}$ be the direct sum of an ordered k-tuple of oriented vector spaces. If $e_{i} \in \bigwedge^{max}E_{i}$ orients $E_{i}$, then we orient $E$ by $e_{1}\wedge \cdots \wedge e_{n} \in \bigwedge ^{max} E$.

An exact sequence of finite-dimensional vector spaces
\begin{equation}
0\rightarrow E_{1}\stackrel{i_{1}}\rightarrow F_{1}\stackrel{j_{1}}\rightarrow E_{2}\stackrel{i_{2}} \rightarrow F_{2} \cdots \stackrel{j_{n-1}}\rightarrow E_{n}\stackrel{i_{n}}\rightarrow F_{n}\rightarrow 0
\end{equation}
induces an isomorphism between $\bigotimes_{k}\bigwedge^{max}E_{k} \simeq \bigotimes_{k}\bigwedge^{max}F_{k},$ by writing $$E_{k} = B_{k}^{E}\oplus j_{k-1}B_{k-1}^{F},\hspace{0.2cm} F_{k} = i_{k}B_{k}^{E}\oplus B_{k}^{F}$$
for appropriated oriented subspaces $B_{k}^{E}, B_{k}^{F}$ where $i_{k}, j_{k}$ restrict to isomorphisms.

The exact sequences in this work have at most four non-zero terms, 
\begin{equation*}
0\rightarrow E_{1}\stackrel{i_{1}}\rightarrow F_{1}\stackrel{j_{1}}\rightarrow E_{2}\stackrel{i_{2}} \rightarrow F_{2} \rightarrow 0
\end{equation*}
in this case we have the following isomorphism:
$$\bigwedge^{max}E_{1}\otimes \bigwedge^{max}F_{2}^{*} \cong \bigwedge^{max}E_{2}\otimes \bigwedge^{max}F_{1}^{*}.$$
\subsubsection{Orientation for boundaries of manifolds}
Let $W$ be an oriented manifold with boundary. Denote by $o(W)$ its orientation and by $\overrightarrow{n}$ an exterior pointing vector on $\partial W$.
Then $\partial W$ is oriented by the following convention:
 $$o(\partial W) + \overrightarrow{n} = o(W)$$

\subsubsection{Orientation for stabilization}
Let $D: E \rightarrow F$ be a Fredholm operator with oriented kernel $Ker(D)$ and cokernel $Coker(D)$. Given a stabilization of it $\hat{D}_{\psi}$, we orient its determinant line  $Det(\hat{D}_{\psi})$ by the induced orientations from the following exact sequence:
$$0\rightarrow Ker(\hat{D}_{\psi} )\stackrel{\pi}\rightarrow (-1)^{k \text{ ind} D}\mathbb{R}^{k}\oplus Ker(D)\stackrel{\pi \circ \hat{D}_{\psi} }\rightarrow  Coker(D)\rightarrow Coker(\hat{D}_{\psi} )\rightarrow  0.$$

\subsubsection{Orientation for Glued Fredholm operators}
Given a  pair of gluable Fredholm operators $D_{1}, D_{2}$, the Gluing isomorphism 
$$\#: Det(D_{1}\# D_{2}) \simeq Det(D_{1})\otimes Det(D_{2})$$
induces an orientation on the determinant line of the  glued operator. 

Given orientations to $Ker(D_{i})$ and $Coker(D_{i})$ for $i = 0,1$, we orient the operator $D_{1} \# D_{2}$ by:
$$Ker(D_{1})\# Ker(D_{2}) :=(-1)^{dim C_{1} ind(D_{2})}Ker(D_{1})\oplus Ker(D_{2})$$ $$Coker(D_{1})\# Coker(D_{2}):= Coker(D_{1})\oplus Coker(D_{2}).$$
\begin{remark}
In particular, from the last choice we can conclude that the orientation of the moduli spaces $\mathcal{M}(x,y; \mathbf{J})$ (see \ref{J-hom strip}), appearing in section \ref{Chapter Floer homology}, section 1.1, are compatible with the gluing theorem since in this case the operators are surjective so there is no difference of sign.
\end{remark} 

\subsection{Orientability of the moduli spaces} 

The moduli space of interest here is $\mathcal{M}(x, y ; \mathbf{J})$ (see \ref{J-hom strip}), the space of Floer strips with boundary conditions along a pair of Lagrangians $(L_{0}, L_{1})$. In \cite{FOOO}, a sufficient condition for the orientability of this space is given.
The condition imposed is that the pair $(L_{0}, L_{1})$ has to be (relatively) spin. 
\begin{definition} A spin structure on an oriented vector bundle $E \rightarrow B$ is a homotopy class of a trivialization of $E$ over the 1-skeleton of $B$ that extends over the 2-skeleton of $B$. 
\end{definition} 
\begin{definition}
{A spin manifold is an oriented Riemannian manifold $M$, together with an spin structure on the tangent bundle of $M$.}
\end{definition}
\begin{definition}\cite[Definition 2.1.9]{FOOO}
Let $\Sigma$ be an oriented compact surface with boundary. A complex bundle pair $(E, \lambda)$ is a complex vector bundle $E \rightarrow \Sigma$ with a maximal totally real bundle $\lambda \rightarrow \partial \Sigma$, such that $E\mid_{\partial \Sigma}\cong \lambda \otimes \mathbb{C}$.
\end{definition}
 
We now recall the results about the orientability of moduli spaces. The following proposition is used to assign a canonical orientation at any determinant line associated to a point in the moduli space.

\begin{proposition}\cite[8.1.4]{FOOO} \label{propFo} Consider the complex bundle pair $(E, \lambda)$ over $(D^{2}, \partial D^{2})$. Suppose $\lambda$ is trivial. Then each trivialization on $\lambda$ canonically induces an orientation on $Det(\bar{\partial}_{(E,\lambda)})$.
\end{proposition}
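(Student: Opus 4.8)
\emph{Plan.} The operator $\bar{\partial}_{(E,\lambda)}$ acts on sections of $E$ over $D^{2}$ with boundary values in $\lambda$; it is a first--order elliptic operator with totally real boundary condition, hence Fredholm, with $\mathrm{ind}\,\bar{\partial}_{(E,\lambda)}=n+\mu(E,\lambda)$, where $n=\mathrm{rank}_{\mathbb C}E$ and $\mu(E,\lambda)$ is the Maslov number of the bundle pair (this index will play no role below). The plan, which follows \cite[Chapter 8]{FOOO}, is to use the trivialization of $\lambda$ to cut $(E,\lambda)$ into universal pieces whose determinant lines carry a preferred orientation, and then to verify that the resulting orientation of $\mathrm{Det}(\bar{\partial}_{(E,\lambda)})$ does not depend on how the cutting was performed.

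\emph{Reduction to model pairs.} First I would recall the two structural isomorphisms of the determinant--line construction: additivity under direct sums, $\mathrm{Det}(\bar{\partial}_{(E_{1}\oplus E_{2},\lambda_{1}\oplus\lambda_{2})})\cong\mathrm{Det}(\bar{\partial}_{(E_{1},\lambda_{1})})\otimes\mathrm{Det}(\bar{\partial}_{(E_{2},\lambda_{2})})$ with a fixed Koszul sign convention, and the gluing isomorphism under the connected sum of two discs at an interior point, $\mathrm{Det}(\bar{\partial}_{(E,\lambda)\#(E',\lambda')})\cong\mathrm{Det}(\bar{\partial}_{(E,\lambda)})\otimes\mathrm{Det}(\bar{\partial}_{(E',\lambda')})$. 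Since $D^{2}$ is contractible, $E$ is trivial; fixing a trivialization of $E$ turns the chosen trivialization of $\lambda$ into a loop $g:S^{1}\to GL(n,\mathbb C)$ with $\lambda_{\theta}=g(\theta)\cdot\mathbb R^{n}$. As $GL(n,\mathbb C)$ is connected, $g$ is homotopic to $\theta\mapsto\mathrm{diag}(e^{id\theta},1,\dots,1)$ with $d=\deg(\det g)$; transporting the determinant line along this homotopy of bundle pairs, $(E,\lambda)$ together with its trivialization becomes $(\underline{\mathbb C},e^{id\theta}\mathbb R)\oplus(\underline{\mathbb C^{\,n-1}},\underline{\mathbb R^{\,n-1}})$. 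Finally, the rank-one pair $(\underline{\mathbb C},e^{id\theta}\mathbb R)$ is the $|d|$-fold connected sum of the standard Maslov-$(\pm2)$ generator $(E^{(\pm2)},\lambda^{(\pm2)})=(\underline{\mathbb C},e^{\pm i\theta}\mathbb R)$ (for $d=0$ it is the trivial pair). Thus, via canonical isomorphisms, everything is reduced to the determinant lines of two models.

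\emph{Orienting the models.} For the trivial pair $(\underline{\mathbb C^{m}},\underline{\mathbb R^{m}})$ the kernel of $\bar{\partial}$ is the space of constant real sections, $\cong\mathbb R^{m}$, and the cokernel vanishes, so $\mathrm{Det}=\Lambda^{\mathrm{top}}\mathbb R^{m}$, which I orient by the standard orientation of $\mathbb R^{m}$ read off from the chosen frame of $\lambda$. For $(E^{(2)},\lambda^{(2)})$ I would double the disc: gluing $E^{(2)}$ to its conjugate along $\partial D^{2}$ by the conjugation that fixes $\lambda^{(2)}$ produces a complex line bundle $\mathcal{O}(2)$ over $S^{2}$, whose Cauchy--Riemann operator is complex--linear and therefore has complex --- hence canonically oriented --- kernel and cokernel; FOOO's additivity-under-doubling isomorphism then orients $\mathrm{Det}(\bar{\partial}_{(E^{(2)},\lambda^{(2)})})^{\otimes2}$, and the remaining sign ambiguity is pinned down by the trivialization of $\lambda^{(2)}$ (equivalently, by the identification of the kernel with the Lie algebra of disc automorphisms fixing the boundary). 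The case $(E^{(-2)},\lambda^{(-2)})$ is handled the same way with reversed orientation. Assembling the isomorphisms of the previous paragraph with these orientations yields the desired orientation of $\mathrm{Det}(\bar{\partial}_{(E,\lambda)})$.

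\emph{Main obstacle.} The delicate point is not producing an orientation but showing it is genuinely \emph{canonical}: it must be independent of the trivialization of $E$, of the homotopy used to diagonalize $g$, and of the order in the connected-sum decomposition, and it must depend on the trivialization of $\lambda$ only through its homotopy class. This is exactly where the Koszul signs in the direct-sum and gluing isomorphisms, together with the once-and-for-all orientation of the Maslov-$2$ generator, must be tracked carefully and shown to be mutually consistent. All of this bookkeeping is carried out in \cite[Chapter 8]{FOOO}, and the argument sketched above reproduces the shape of their proof of \cite[Proposition 8.1.4]{FOOO}.
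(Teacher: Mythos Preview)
The paper does not give its own proof of this proposition; it simply refers the reader to \cite{FOOO}, pages 677--679. Your sketch has the right overall shape --- reduce via homotopy and direct sum to elementary pieces, then orient those --- but two of the steps do not work as written, and the mechanism differs from what FOOO actually does.

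First, the ``connected sum of two discs at an interior point'' is an annulus, not a disc, so $(\underline{\mathbb C},e^{id\theta}\mathbb R)$ cannot be realised as a $|d|$-fold interior connected sum of Maslov-$(\pm2)$ discs; if you meant gluing at a boundary node, that is a different operation with its own determinant-line formula, which you would need to set up. Second, the doubling trick for the Maslov-$2$ generator only orients $\mathrm{Det}(\bar\partial_{(E^{(2)},\lambda^{(2)})})^{\otimes 2}$; the sentence ``the remaining sign ambiguity is pinned down by the trivialization of $\lambda^{(2)}$'' is exactly the assertion to be proved, so that step is circular.

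FOOO's actual argument avoids both issues by pinching rather than connect-summing discs. The trivialization of $\lambda$ induces, via $\lambda\otimes_{\mathbb R}\mathbb C\cong E|_{\partial D^{2}}$, a trivialization of $E$ over a collar of $\partial D^{2}$. Pinching the inner boundary of this collar to a point degenerates $(D^{2},E,\lambda)$ to a nodal curve $S^{2}\cup_{\mathrm{pt}}D^{2}$: the disc component now carries the constant pair $(\underline{\mathbb C}^{n},\underline{\mathbb R}^{n})$, whose $\bar\partial$-operator has kernel $\mathbb R^{n}$ and vanishing cokernel, oriented by the chosen frame of $\lambda$; all the twisting of $E$ is pushed onto the sphere component, where the Cauchy--Riemann operator is $\mathbb C$-linear and hence has canonically oriented kernel and cokernel. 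The gluing isomorphism for determinant lines then orients $\mathrm{Det}(\bar\partial_{(E,\lambda)})$, and the dependence on the trivialization of $\lambda$ is transparent --- it enters only through the orientation of $\mathbb R^{n}$.
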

The proof can be found in pages 677-679 \cite{FOOO}. 

The orientability of the moduli spaces of Floer strips is proved in \cite[Theorem 8.1.14]{FOOO}. The proof uses Proposition \ref{propFo} combined with a gluing argument. Additional choices of spin structures and orientations of auxiliary determinant lines are also required. 

Here, we will present the proof of an analogous statement (\ref{orientability}), for the moduli spaces of half-tubes $\mathcal{M}(\gamma_{-},\gamma_{+};H, J)$ (defined in \ref{JH-tube}) with boundary condition on a fixed Lagrangian. The idea of the proof is the following.  
 
Each orbit $\gamma \in \mathcal{O}_{*}(H)$ satisfies $[\gamma] = id \in \pi_{1}(M,L)$.
Let $D^{2}_{-} = D^{2}_{-} \cap (-\infty, 0] \times \mathbb{R}$ be the half disk. There exist a map $v_{\gamma}:(D^{2}_{-},\partial D^{2}_{-}\cap S^{1}, \partial D^{2}_{-} \cap i\mathbb{R})\longrightarrow (M,L,\gamma)$ such that $v_{\gamma}(0,t) = \gamma(\frac{t}{2}+\frac{1}{2})$. This map is a capping for $\gamma$.  

Fix the clockwise orientation on $\partial D^{2}_{-}$, then for any trajectory $u \in \mathcal{M}(\gamma_{-}, \gamma_{+})$ the map $w_{u}=v_{\gamma_{-}}\#u\#(-v_{\gamma_{+}}):(D^{2}, \partial D^{2})\longrightarrow (M,L)$ is a disk with boundary on $L$ and then the bundle pair $(w_{u}^{*}(TM), w_{u}\vert_{\partial D^{2}}^{*}(TL)) = (E_{w_{u}}, \lambda_{w_{u}})$ is a complex bundle pair.

If $\lambda_{w_{u}}$ is trivial, then by Proposition \ref{propFo} a trivialization of $\lambda$ induces a canonical orientation on $det(\bar{\partial}_{(E_{w_{u}}, \lambda_{w_{u}})})$. 

A gluing argument applied to an appropriate one-parameter family of operators relating the operator $\bar{\partial}_{(E_{w_{u}}, \lambda_{w_{u}})}$ with the operator $D_{u}$ (the linearization of the operator $\bar{\partial}_{\mathbf{J}, H}$ at $u$) induces a canonical orientation on $det(D_{u})$. After a canonical orientation is given to each $u \in \mathcal{M}(\gamma_{-},\gamma_{+};H, J)$, the spin condition guarantees a global orientation on $\mathcal{M}(\gamma_{-},\gamma_{+};H, J)$.

We now proceed to give more details and we fix some notation. Let $R\in \mathbb{R}$, and define the spaces:
\begin{itemize}
\item $\Theta_{-} = D^{2}_{-}\#\mathbb{R}\times[0,\infty),$
\item $\Theta = \mathbb{R}\times[0,1],$
\item $\Theta_{+} =\mathbb{R}\times(-\infty,0]\#D^{2}_{+}.$
\end{itemize} 
Denote by $\Omega_{R} = \Theta_{-}\#_{R}\Theta\#_{R}\Theta_{+}$ the glued domain.

To simplify notation let us write $v_{-}, v_{+}$ for the capping maps associated to $\gamma_{-}, \gamma_{+}$. On the spaces above, define the maps
\begin{itemize}
\item $\tilde{v}_{-}:\Theta_{-}\longrightarrow M$ by
$\tilde{v}_{-}\vert_{D^{2}_{-}}=v_{-}$ and $\tilde{v}_{-}\vert_{\mathbb{R}\times[0,\infty)}(s,t) = v_{-}(0,t),$ 
\item $\tilde{v}_{+}:\Theta_{+}\longrightarrow M$ by
$\tilde{v}_{+}\vert_{D^{2}_{+}}=v_{+}$ and $\tilde{v}_{+}\vert_{\mathbb{R}\times (-\infty, 0]}(s,t) = v_{+}(0,t).$
\end{itemize}
We can now define a pre-glued map
$w_{u, R}: (\Omega_{R}, \partial \Omega_{R}) \longrightarrow (M, L)$ by
\smaller
$$w_{u, R}(s, t)=\left\{ \begin{array}{lcc}
            \tilde{v}_{-}(s+2R, t) &   s \leq -R-1 
             \\ exp_{\gamma_{-}(t)}(\beta^{-}(s+R)exp^{-1}_{\gamma_{-}(t)}(\tilde{v}_{-}(s+2R, t)) 
             \\ + \beta^{+}(s+R)exp^{-1}_{\gamma_{-}(t)}(u(s, t)))&   -R-1 \leq s < -R+1 
             \\ u(s,t) &  -R+1 \leq s < R-1 
            \\exp_{\gamma_{+}(t)}(\beta^{-}(s-R)exp^{-1}_{\gamma_{+}(t)}(\tilde{v}_{+}(s-2R, t)) 
             \\ + \beta^{+}(s-R)exp^{-1}_{\gamma_{+}(t)}(u(s, t)))&   R-1 \leq s < R+1 
             \\ \tilde{v}_{+}(s-2R, t) &   s \geq 1+R, 
          
             \end{array}
   \right.$$
   \normalsize
where $\beta^{-}, \beta^{+}$ are defined as in Step 1 of the proof of Theorem 4.4.

Let $p > 2$, and consider the following two operators:   
$$\overline{\partial}_{\pm}: W^{1,p}\big(\Theta_{\pm},\partial \Theta_{\pm}; \tilde{v}_{\pm}^{*}(TM),\tilde{v}_{\pm}^{*}\vert_{\partial \Theta_{\pm} }(TL)\big)\longrightarrow  L^{p}\big(\Theta_{\pm},\tilde{ v}^{*}_{\pm}(TM)\otimes\Lambda^{0,1}(\Theta_{\pm})\big),$$ 
which is the Cauchy-Riemann operator with Lagrangian boundary condition and the operator $$\overline{\partial}_{w_{u,R}}:W^{1,p}\big(\Omega_{R},\partial \Omega_{R}; w_{u, R}^{*}(TM),w_{u, R}^{*}\vert_{\partial \Omega_{R}}(TL)\big)\longrightarrow L^{p}\big(\Omega_{R}, w_{u, R}^{*}(TM)\otimes\Lambda^{0,1}(\Omega_{R})\big),$$
is the linearization of the operator $\bar{\partial}_{\mathbf{J},H, R}$ defined as follows. 
Let $\alpha_{R}:\mathbb{R}\longrightarrow [0,1]$ be a smooth function such that 
$$\alpha_{R}(s)= \begin{cases}
             1 &  \text{ if } s   \leq -R-\dfrac{1}{2}
             \\ 0 &  \text{ if } -R \geq s \leq R 
             \\ 1 &  \text{ if } s\geq R+ \dfrac{1}{2}.      
             \end{cases}
   $$ 
The operator is given by
$$ \bar{\partial}_{\mathbf{J},H, R}u =\dfrac{\partial u}{\partial s} + J_{t}(u)\dfrac{\partial u}{\partial s} + \alpha_{R}(s) \nabla H_{t}(u).$$
For $\xi \in W^{1,p}\big(\Omega_{R},\partial \Omega_{R}; w_{u, R}^{*}(TM),w_{u, R}^{*}\vert_{\partial \Omega_{R}}(TL)\big)$ and a choice of Hermitian connection $\nabla$ for which $\mathbf{J}$ is parallel, the linearization is given by:
\begin{equation}
\overline{\partial}_{w_{u,R}}\xi = \nabla_{s}\xi + J_{u}\nabla_{t}\xi + \nabla_{\xi}(\alpha_{R}(s)\nabla H_{t}(u)).
\end{equation}
By Proposition \ref{propFo}, the determinant line $det(\overline{\partial}_{w_{u,R}})$ has a canonical orientation (since its the sum of the Cauchy-Riemann operator with a zero-order operator), induced from a trivialization of $TL$.

The Gluing theorem for Fredholm operators (see for example \cite{Sc}) implies the isomorphism between the determinant lines:
\begin{equation}
\label{detIso}
\det\overline{\partial}_{w_{u,R}} \cong \det \overline{\partial}_{-} \otimes \det(D\overline{\partial_{J}})_{u}\otimes \det \overline{\partial}_{+}.
\end{equation}
From \ref{detIso}, it follows that a choice of orientation on $\det \overline{\partial}_{\pm}$ determines an orientation on $\det(D\overline{\partial_{J}})_{u}$.

It remains to show that the previous choices lead to a trivial determinant line bundle $$\bigcup \limits_{ u\in\mathcal{M}(\gamma_{-},\gamma_{+})} \{u\}\times Det(D_{u})\rightarrow \mathcal{M}(\gamma_{-},\gamma_{+}).$$ To see this, let $u_{0}, u_{1} \in \mathcal{M}(\gamma_{-},\gamma_{+})$ be in the same connected component, orient $\det(D_{u_{0}})$ using the trivialization that comes from the spin structure on $L$ and a choice of orientation on $\det \overline{\partial}_{\pm}$.
Consider a path $\phi(t)$ joining $u_{0}$ to $u_{1}$. Since $Det(D_{\phi_{t}})$ is trivial, we can transport the orientation to $\det(D_{u_{1}})$. The orientation induced on $\det(D_{u_{1}})$ does not depend on the choice of path since for any other path $\xi(t)$, we can consider the  loop given by,
\begin{align*}
\tilde{\phi}: \Omega_{R}\times S^{1} &\rightarrow M,\\
(z,t)&\mapsto (\phi(t)\#(-\xi(t)))(z).
\end{align*}
Since $L$ is spin, for any $z \in \partial \Omega_{R}$,  the spin structure gives a trivialization of $(\omega \mid_{\{z\}\times S^{1}})^{*}(TL)$ which extends to a trivialization of $(\omega\mid_{\partial \Omega_{R} \times S^{1}})^{*}(TL)$, since this trivialization is unique up to homotopy the induced orientation is also unique.
From the discussion above we can conclude the following Theorem:

\begin{theorem}
\label{orientability} Suppose that a Lagrangian $L$ is spin. Then for any $\gamma_{0}, \gamma_{1} \in \mathcal{O}_{*}(H)$, the moduli space $\mathcal{M}(\gamma_{0}, \gamma_{1}; H, \mathbf{J})$( see \ref{JH-tube}) of connecting orbits in Lagrangian Floer theory $HF(L, L; H, \mathbf{J})$, is orientable. Furthermore, orientations on $Det(\bar{\partial}_{\pm})$ and the spin structure canonically determine an orientation on $\mathcal{M}(\gamma_{0}, \gamma_{1}; H, \mathbf{J})$.
\end{theorem}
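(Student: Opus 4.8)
The plan is to formalize the argument sketched above, assembling its pieces in the following order. First I would record that every $\gamma\in\mathcal{O}_*(H)$ represents the trivial class in $\pi_1(M,L)$, so it admits a capping half-disk $v_\gamma\colon(D^2_-,\partial D^2_-\cap S^1,\partial D^2_-\cap i\mathbb{R})\to(M,L,\gamma)$ with $v_\gamma(0,t)=\gamma(\frac{t}{2}+\frac{1}{2})$; these cappings are the starting data and I would fix one for each orbit once and for all. Then, for the fixed pair $\gamma_0,\gamma_1\in\mathcal{O}_*(H)$ and any $u\in\mathcal{M}(\gamma_0,\gamma_1;H,\mathbf{J})$, I would build the pre-glued map $w_{u,R}\colon(\Omega_R,\partial\Omega_R)\to(M,L)$ out of $\tilde v_-$, $u$, $\tilde v_+$ as described, with $\Omega_R=\Theta_-\#_R\Theta\#_R\Theta_+$ diffeomorphic to $(D^2,\partial D^2)$. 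Since $L$ is spin, in particular oriented, the restriction $\lambda=w_{u,R}\vert_{\partial\Omega_R}^*(TL)$ is an orientable real bundle over a circle, hence trivial, and $(w_{u,R}^*TM,\lambda)$ is a complex bundle pair over $(D^2,\partial D^2)$.

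Next I would apply Proposition \ref{propFo}: a trivialization of $\lambda$ canonically orients $\mathrm{Det}(\bar\partial_{w_{u,R}})$, noting that $\bar\partial_{w_{u,R}}$ is the Cauchy--Riemann operator with Lagrangian boundary condition plus the zeroth-order term $\nabla_\xi(\alpha_R(s)\nabla H_t(u))$, so Proposition \ref{propFo} still applies after linearly deforming this term to zero. I would then invoke the gluing theorem for Fredholm operators to obtain the isomorphism (\ref{detIso}), $\det\bar\partial_{w_{u,R}}\cong\det\bar\partial_-\otimes\det(D\bar\partial_J)_u\otimes\det\bar\partial_+$, and observe that, once orientations of $\det\bar\partial_\pm$ have been chosen, the canonical orientation of the left-hand side transfers to a well-defined orientation of $\det(D\bar\partial_J)_u=\mathrm{Det}(D_u)$. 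This produces a pointwise orientation at every $u$.

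Finally I would check that these pointwise orientations fit together into an orientation of the determinant line bundle $\bigcup_u\{u\}\times\mathrm{Det}(D_u)\to\mathcal{M}(\gamma_0,\gamma_1;H,\mathbf{J})$, which is equivalent to orientability of the moduli space. For $u_0,u_1$ in the same component, pick a path $\phi(t)$ joining them; the determinant line bundle pulled back over $[0,1]$ is trivial, so the orientation at $u_0$ transports to $u_1$. Independence of the path is where the spin hypothesis is essential: given a second path $\xi(t)$, the loop $\tilde\phi(z,t)=(\phi(t)\#(-\xi(t)))(z)$ has boundary $\partial\Omega_R\times S^1$, a $2$-complex, and a spin structure on $L$ furnishes a trivialization of $TL$ over its $1$-skeleton that extends over its $2$-skeleton and is unique up to homotopy; this trivialization, fed through Proposition \ref{propFo} and the gluing isomorphism, induces the same orientation at both ends of the loop. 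Hence the transport is well defined, $\mathcal{M}(\gamma_0,\gamma_1;H,\mathbf{J})$ is orientable, and since every choice in the construction — the cappings, the trivializations coming from the spin structure, and the orientations of $\det\bar\partial_\pm$ — is either canonical or explicitly prescribed, the orientation is canonically determined. \emph{The main obstacle is exactly this last consistency step}: one must verify that the homotopy-theoretic content of a spin structure is precisely what kills the potential $\mathbb{Z}/2$ ambiguity when comparing orientations along two different paths, and that the gluing isomorphisms are natural enough for this comparison to make sense uniformly in $R$ and in the homotopy parameter.
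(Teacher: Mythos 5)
Your proposal follows essentially the same route as the paper's argument: cappings of the orbits, the pre-glued map $w_{u,R}$ on $\Omega_R\cong(D^2,\partial D^2)$, Proposition \ref{propFo} applied to the resulting complex bundle pair (the zeroth-order Hamiltonian term being harmless), the gluing isomorphism (\ref{detIso}) to transfer the orientation to $Det(D_u)$, and path-transport with the spin structure on $L$ guaranteeing independence of the path via the trivialization of $TL$ over $\partial\Omega_R\times S^1$. The proposal is correct and matches the paper's proof in both structure and the role assigned to each hypothesis.
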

Theorem \ref{orientability} is based on Theorem \cite[8.1.14]{FOOO}, wich establishes conditions for the orientability of the moduli space $\mathcal{M}(x, y;\mathbf{J})$ 
(defined in expression \ref{J-hom strip}), of Floer strips connecting intersection points, with boundary condition in a pair of Lagrangians $(L_{0}, L_{1})$. 

Theorem \ref{orientability} is less general, since it refers to moduli spaces of Floer half tubes (defined in \ref{JH-tube}), connecting trajectories of the Hamiltonian vector field with boundary condition on a fixed Lagrangian, these are the ones used in section \ref{chapitre whitehead} for the bifurcation analysis.

In Theorem \ref{Teorema} we use the orientability of the moduli spaces in expresions \ref{J-hom strip} and \ref{mov bound}. Theorem 8.1.14 in \cite{FOOO} implies the orientability of these moduli spaces.

The proof of Theorem 8.1.14 in \cite{FOOO} is analogous to the one of the theorem above.
Some differences are the following: instead of choosing capping disks associated to each trajectory $\gamma \in \mathcal{O}_{\eta}(H)$, a path $\lambda_{x}(t)\in \Lambda(T_{x}M)$, joining $T_{x}L_{0}$ with $T_{x}L_{1}$ is associate to each intersection point $x\in L_{0}\cap L_{1}$ in a canonical way using the spin structures on $L_{0}, L_{1}$.
Now, for each Floer strip $u \in \mathcal{M}(x, y; L_{0}, L_{1})$, consider the loop  $\lambda(t)=T_{u(s,0)}L_{0}\#\lambda_{y}\#(-T_{u(s,1)}L_{1})\-\lambda_{x}$.

Let the map $\tilde{u}:D^{2} \rightarrow M $ be $\tilde{u}\mid_{D^{2}-\{\pm 1\}}= u$ and $\tilde{u}(\pm 1) = x/y$. We obtain a bundle pair $(\tilde{u}^{*}(TM), \lambda(t))$, and applying Proposition 4.1.1 we obtain an orientation on $Det(\tilde{u})$.

To orient $Det(u)$, we use virtual operators defined on half-disks with boundary conditions involving the paths $\lambda_{x}$ and $\lambda_{y}$. From a gluing argument on these bundles we obtain an expression analogous to the one in \ref{detIso}. Finally, we use the spin structures to transport the orientation on the connected component of $u$.

When moving boundary conditions are present, \cite[Theorem 8.1.14]{FOOO} still applies: we can transform a strip with moving boundary conditions into a strip with fixed boundary conditions using the map $\psi_{t\beta(s)}$, defined in section 1.1.2.4. The new strip satisfies a perturbed $\overline{\partial}_{H, J}$-type equation. The linearization of it is a Fredholm operator. The spin structure induces an orientation on the determinant line bundle of the moduli space composed by strips with fixed boundary condition. Since this moduli space is diffeomorphic to the space with moving boundary condition, the diffeomorphism induces an orientation on it.

\subsubsection{Orientability of the moduli spaces $\mathcal{M}_{\Lambda}(\gamma_{-}^{\Lambda}, \gamma_{+}^{\Lambda})$(\ref{J-parm})}

Let $\Lambda=[0, 1]$ and consider a generic homotopy $\mathcal{D}_{\Lambda}=\{\mathcal{D}_{\lambda} =(\psi_{\lambda}, \mathbf{J}_{\lambda})\}_{\lambda \in \Lambda}$ joining two generic data $\mathcal{D}_{0}=(\psi^{H_{0}}, \mathbf{J}_{0})$ and $\mathcal{D}_{1}=(\psi^{H_{1}},\mathbf{J}_{1})$ and assume that for each $\lambda$ the data $\mathcal{D}_{\lambda}$ is regular.
The spin structure on $L$ is transported along the homotopy $\psi_{\lambda}$. Assume that $\gamma^{\Lambda}_{-}, \gamma^{\Lambda}_{+}$ are two one-parameter families of non-degenerate orbits. Let $(\lambda, u_{\lambda}) \in \mathcal{M}_{\Lambda}(\gamma^{\Lambda}_{-}, \gamma^{\Lambda}_{+})$. 

To orient the determinant line $Det(D_{(\lambda,u_{\lambda})})$ at any point we consider the operator $D_{(\lambda,u_{\lambda})}$ as stabilization of the operator $D_{u_{\lambda}}$ as follows:

Consider the map $\Psi: \mathbb{R} \rightarrow \mathbb{R}$ to be identified with the map $\tilde{X}_{u_{\lambda}}: T_{\lambda}\Lambda \rightarrow \mathbb{R}\langle \tilde{X}_{u_{\lambda}}\rangle$\footnote{Here, $\tilde{X}_{u_{\lambda}}$ represents a cut-off version of $
\nabla_{\lambda}\nabla^{t}H^{\lambda}(t, u) + \nabla_{\lambda } J^{ \lambda }_{t} ( \partial_{t} u - X_{ H^{\lambda}_{t} }(u))$,  the linear operator in the linearization of $D_{(\lambda, u_{\lambda})}.$}, defined by $\mu \mapsto \mu \tilde{X}_{u_{\lambda}}$.

Then the operator $D_{(\lambda,u_{\lambda})}$ is locally given by a stabilization as follows:
$D_{(\lambda,u_{\lambda})}(\mu, \xi) = \hat{D}_{\phi}(\mu, \xi) = \phi(\mu) + D_{u_{\lambda}}(\xi)$.

Since for a generic homotopy the operator $D_{u_{\lambda}}$ is surjective, the determinant line $Det(D_{u_{\lambda}}) = (\bigwedge^{max}Ker(D_{u_{\lambda}}))$.  Then, from section 4.2.2 the orientation on $Det(D_{(\lambda,u_{\lambda})})$ is given by $$Det(D_{(\lambda,u_{\lambda})}) \cong \bigwedge^{max}(-\mathbb{R}\oplus Ker(D_{u_{\lambda}})).$$

Following the same argument as the one given in the proof of Theorem \ref{orientability},
the spin structure grantees that the orientation can be transported on a connected component of the base $\mathcal{M}_{\Lambda}(\gamma^{\Lambda}_{-}, \gamma^{\Lambda}_{+})$. Then, $\mathcal{M}_{\Lambda}(\gamma^{\Lambda}_{-}, \gamma^{\Lambda}_{+})$ has a canonical orientation up to the choices.

\subsubsection{Orientations in Morse theory} Determinant lines also induce orientations on spaces of flow lines connecting critical points of a Morse function. The orientations induce by the determinant bundles coincide with the classical orientation of these spaces given by the intersection of stable and unstable manifold (under some choices). This construction appears in Appendix B \cite{Sc}. 

In particular, this implies that in the exact case treated in section 1.2.3, the differential of the Floer complex and Morse complex agree. Since the moduli spaces \ref{J-hom strip} are identified with the spaces of flow lines connecting critical points, and the orientations of both spaces can be defined by determinant bundles. 

\subsection{Signs} 
In this section we define the signs of a Floer strip and we compute the sign of a handle slide.

\subsubsection{Signs on Floer strips}
Let $\hat{u} \in \hat{\mathcal{M}}^{0}(\gamma_{1}, \gamma_{2})$ be a 0-dimensional moduli space of unparametrized trajectories. Let $u$ be any representative of $\hat{u}$ in $\mathcal{M}(\gamma_{1}, \gamma_{2})$. Then sign$(\hat{u})$ = sign$(u)$. is defined by the relation $sign(u)(\mathbb{R} \langle\partial_{s}u \rangle) = Det(D_{u})$, where the last space has a canonical orientation induced by the spin structure and capping.

For $u \in \mathcal{M}_{\varphi}^{0}(\gamma_{1}, \gamma_{2})$, we set $sign(u)(1\otimes 1^{*}) = Det(D_{v_{u}})$. Here $v_{u}$ is defined in section \ref{Chapter Floer homology}, (7)subsection 2.1.4, and $D_{v_{u}}$ is the linearization of the $\overline{\partial}$-type operator defined by the equation satisfied by $v_{u}$. 

If $u$ is a handle-slide occurring at $\lambda = \lambda_{0}$ then $sign(u)(\mathbb{R} \langle\partial_{s}u \rangle\otimes \mathbb{R}\langle \tilde{X}_{u}\rangle^{*}) = Det(D_{u})$. Here, $\tilde{X}_{u}$ is as defined in \ref{tilde X}.

\subsubsection{Sings of boundaries of moduli spaces involving handle-slides} 

Let $(\lambda_{0}, u_{\lambda_{0}})\in \mathcal{M}_{\Lambda}(\gamma_{-}^{\Lambda}, \gamma_{+}^{\Lambda})$ be a handle-slide, as defined in Definition 3.2.3, section \ref{chapitre whitehead}. 

In this subsection we show that the signs of the boundaries of the compactified one-parameter moduli spaces $\widehat{\mathcal{M}}^{1}_{\Lambda}(x^{\Lambda}_{-}, y^{\Lambda}),\widehat{\mathcal{M}}^{1}_{\Lambda}(x^{\Lambda}, x^{\Lambda}_{+})$ satisfy:
\smaller
\begin{align}
\partial \bar{\widehat{\mathcal{M}}}^{1}_{[\lambda_{0}-\epsilon, \lambda_{0}+ \epsilon]}(x^{\Lambda}_{-}, y^{\Lambda}) &= -\widehat{\mathcal{M}}_{\lambda_{0}-\epsilon}^{1}(x^{\lambda_{0}}_{-}, y^{\lambda_{0}}) \cup \widehat{\mathcal{M}}_{\lambda_{0}+\epsilon}^{1}(x^{1}_{-}, y^{1}) \cup -\{u\}\times\widehat{\mathcal{M}}_{\lambda_{0}}^{1}(x^{\lambda_{0}}_{+}, y^{\lambda_{0}})\\
\partial \bar{\widehat{\mathcal{M}}}^{1}_{[\lambda_{0}-\epsilon, \lambda_{0}+ \epsilon]}(x^{\Lambda}, x^{\Lambda}_{+}) &= -\widehat{\mathcal{M}}_{\lambda_{0}-\epsilon}^{1}(x^{\lambda_{0}}, x^{\lambda_{0}}_{+}) \cup \widehat{\mathcal{M}}_{\lambda_{0}+\epsilon}^{1}(x^{1}, x^{1}_{+}) \cup \widehat{\mathcal{M}}_{\lambda_{0}}^{1}(x^{\lambda_{0}}, x^{\lambda_{0}}_{-})\times\{u\}.
\end{align}
\normalsize
A similar statement appears as \cite[Section 7.3.2]{Le1}.
See subsection 4.2, section \ref{chapitre whitehead} to recall the definitions. 
 
Consider  the relation (61). Denote by $(\lambda, u)$ the handle-slide. Let $\chi_{-} = (u, v, \rho)$ be such that $\mathcal{G}_{-}(\hat{\chi}_{-}) = (\lambda_{-}, w_{\lambda_{-}}) \in \widehat{\mathcal{M}}^{1}_{\Lambda}(x^{\Lambda}_{-}, y^{\Lambda})$.

The determinant lines $Det(D_{u})$ and $Det(D_{v})$ are oriented as Theorem \ref{orientability} explains. Then we have the following isomorphisms:
\smaller
\begin{align}
Det(D_{u})\otimes Det(D_{v}) = & Ker(D_{u})\otimes (Coker(D_{u}))^{*}\otimes Ker(D_{v})\\ \cong& 
Ker(D_{u})\otimes Ker(D_{v})\otimes (Coker(D_{u}))^{*}\\ \cong & -(\bigwedge ^{max} Ker D_{u}\#_{\rho} Ker D_{v}) \otimes (Coker(D_{u}))^{*}\\ \cong &
-Det(D_{w_{\chi_{-}}}) \cong -Det(D_{w_{-}}).
\end{align}
\normalsize
The isomorphism (65) follows from the conventions for the gluing map in subsection 5.2.4.

The last isomorphisms in (66) follows from the closeness between the preglued trajectory and the glued trajectory. 

We conclude that sign$(u)$sign$(v)$ = -sign$(w_{-})$. Notice that the orientation of $Det(D_{(\lambda_{-}, w_{-})})$ is given by $-\mathbb{R}\oplus Ker(D_{u})$. Then, the base $\{sign(\lambda_{-})\frac{\partial}{\partial \lambda}, sign(w) \frac{\partial w_{-}}{\partial s})\}$ is a compatible base when  $sign(\lambda_{-})= -1$. From this we can conclude that $sign(u)sign(v) = sign(\lambda_{-})sign(w_{-})$.
In the same way we deduce (62). This concludes the verification.


\end{document}